\theoremstyle{plain}
\newtheorem{thm}{Theorem}[section]
\newtheorem{theorem}[thm]{Theorem}
\newtheorem{lemma}[thm]{Lemma}
\newtheorem{proposition}[thm]{Proposition}
\newtheorem{corollary}[thm]{Corollary}
\newtheorem{conjecture}[thm]{Conjecture}
\theoremstyle{definition}
\newtheorem{definition}[thm]{Definition}
\newtheorem{remark}[thm]{Remark}
\newtheorem{example}[thm]{Example}
\newtheorem{problem}[thm]{Problem}
\pgfplotsset{compat=1.15}
\newtheorem{thevarthm}[thm]{\varthmname}
\newenvironment{varthm*}[1]{\trivlist\item[]{\bf #1.}\it}{\endtrivlist}
\DeclareMathOperator{\Spec}{Spec}
\numberwithin{equation}{section}
\begin{document}

\title{On the geography of log-surfaces}
\titlemark{On the geography of log-surfaces}



\emsauthor{1}{
	\givenname{Bartosz}
	\surname{Naskr\k{e}cki}
	\mrid{917534}
	\orcid{0000-0003-2484-143X}}{B.~Naskr\k{e}cki}
\emsauthor{2}{
	\givenname{Piotr}
	\surname{Pokora}
	\mrid{999250}
	\orcid{0000-0001-8526-9831}}{P.~Pokora}

\Emsaffil{1}{
	\department{ Faculty of Mathematics and Computer Science}
	\organisation{Adam Mickiewicz University Pozna\'n}
	\rorid{}
	\address{Uniwersytetu Pozna\'nskiego 4}
	\zip{PL-61-614}
	\city{Pozna\'n}
	\country{Poland}
	\affemail{bartosz.naskrecki@amu.edu.pl}}
\Emsaffil{2}{
	\department{1}{Department of Mathematics}
	\organisation{1}{University of the National Education Commission Krakow}
	\rorid{1}{}
	\address{1}{Podchor\c a\.zych 2}
	\zip{1}{PL-30-084}
	\city{1}{Krak\'ow}
	\country{1}{Poland}
	\affemail{piotr.pokora@uken.krakow.pl}}

\classification[14C20]{14J29, 14J28, 14N25}

\keywords{algebraic surfaces, curves arrangements, plane curve singularities}

\begin{abstract}
This survey focuses on the geometric problem of log-surfaces, which are pairs consisting of a smooth projective surface and a reduced non-empty boundary divisor. In the first part, we focus on the geography problem for complex log-surfaces associated with pairs of the form $(\mathbb{P}^{2}, C)$, where $C$ is an arrangement of smooth plane curves admitting ordinary singularities. Specifically, we focus on the case in which $C$ is an arrangement consisting of smooth rational curves as its irreducible components. In the second part, containing original new results, we study log-surfaces constructed as pairs consisting of a complex projective $K3$ surface and a rational curve arrangement. In particular, we provide some combinatorial conditions for such pairs to have the log-Chern slope equal to $3$. Our survey is illustrated with many explicit examples of log-surfaces.
\end{abstract}

\maketitle
\section{Introduction}

The geography problem is one of the oldest problems in the theory of complex algebraic surfaces of a very fundamental nature. Let us recall the most basic formulation that fits into our presentation.
\begin{problem}
Let $\mathcal{M}_{c_{1}^{2}, c_{2}}$ be the space of parameters for minimal complex surfaces of general type $X$ with $c_{1}^{2}:=c_{1}^{2}(X)$ and $c_{2}:=c_{2}(X)$ fixed. For which values of $c_{1}^{2}$ and $c_{2}$ are the parameter spaces non-empty?
\end{problem}
Let us recall that Gieseker in \cite{G77} proved that the parameter space of minimal surfaces of general type with fixed Chern numbers (of the canonical model) exists as a quasi-projective variety.
It is well-known that there are some bounds restricting possible values for the pairs $(c_{1}^{2},c_{2})$, and these results can be attributed to several mathematicians, including Castelnuovo \cite{Cast}, Noether \cite{MN}, Bogomolov \cite{Bog}, Miyaoka \cite{Miyaoka77} and Yau \cite{Yau}. More precisely, we have the following.
\begin{thm}
Let $X$ be a minimal complex projective surface of general type. Then one has the following constraints:

\begin{description}
    \item[\textbf{Castelnuovo}:] $c_{1}^{2}, c_{2} \geq 1$;
    \item[\textbf{Noether}:] $\frac{1}{5}(c_{2}-36)\leq c_{1}^{2}$ and $c_{1}^{2} + c_{2} \equiv 0 \, (\textrm{mod} \, 12)$;
    \item[\textbf{Bogomolov--Miyaoka--Yau}:] $c_{1}^{2}\leq 3c_{2}$.
\end{description}
\end{thm}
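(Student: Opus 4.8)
The plan is to handle the three blocks of constraints separately, in an order that avoids repetition: first the Castelnuovo bound on $c_1^2$, then the Bogomolov--Miyaoka--Yau inequality (the only genuinely deep ingredient), from which the Castelnuovo bound on $c_2$ follows at once, and finally Noether's formula together with its divisibility consequence and the Noether inequality. The bound $c_1^2\ge 1$ is immediate: as $X$ is minimal of general type, $K_X$ is nef and big, so $c_1^2(X)=K_X^2>0$, and being an integer it is at least $1$.

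For $c_1^2\le 3c_2$ I would, this being a survey, recall rather than reprove the inequality, pointing to the two classical routes. Yau's analytic proof passes to the canonical model $X_{\mathrm{can}}$ (which carries only rational double points), uses the solution of the Calabi conjecture to equip it with a Kähler--Einstein orbifold metric of negative Ricci curvature, and integrates the pointwise inequality $c_1^2\le 3c_2$ valid for the Chern forms of such a metric; equality forces $X_{\mathrm{can}}$ to be a quotient of the complex $2$-ball. Miyaoka's algebraic proof instead exploits the generic semipositivity of $\Omega^1_X$ and Bogomolov-type inequalities for semistable sheaves, implemented on $\mathbb{P}(\Omega^1_X)$. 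Granting $c_1^2\le 3c_2$, the remaining Castelnuovo bound is immediate: $c_2=e(X)\ge\tfrac13 c_1^2>0$, and since $c_2$ is an integer, $c_2\ge 1$. (A direct argument that $e(X)>0$ is also available, splitting on the irregularity $q$ and analysing the Albanese morphism---a generically finite Albanese forces $b_2$ large, while an Albanese fibration over a curve of genus $q$ has fibres of genus $\ge 2$ and, for $q=1$, necessarily singular fibres, each contributing positively to $e(X)$---but routing through BMY is cleanest.)

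Noether's formula $\chi(\mathcal{O}_X)=\tfrac1{12}(c_1^2+c_2)$ is Hirzebruch--Riemann--Roch applied to $\mathcal{O}_X$; since $\chi(\mathcal{O}_X)=1-q+p_g$ is an integer, this forces $c_1^2+c_2\equiv 0\pmod{12}$. For the Noether inequality, substituting $c_2=12\chi(\mathcal{O}_X)-c_1^2$ shows that $\tfrac15(c_2-36)\le c_1^2$ is equivalent to $c_1^2\ge 2\chi(\mathcal{O}_X)-6$, which, as $q\ge 0$, follows from the classical bound $c_1^2\ge 2p_g-4$. To prove the latter (it is vacuous when $p_g\le 2$, so assume $p_g\ge 3$), I would study the canonical rational map $\varphi_{|K_X|}\colon X\dashrightarrow\mathbb{P}^{p_g-1}$: after resolving its base locus, write the canonical system as $|M|+Z$ with $M$ the mobile part and $Z\ge 0$ fixed, and distinguish two cases. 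If $|K_X|$ is not composed with a pencil, a general $C\in|M|$ is irreducible; the restriction $M_C:=M|_C$ satisfies $\dim|M_C|\ge p_g-2$ (from the sequence $0\to\mathcal{O}_X\to\mathcal{O}_X(M)\to\mathcal{O}_C(M_C)\to 0$, since $M\sim C$) and is special on $C$ because $Z$ is fixed; Clifford's theorem then gives $M^2=\deg M_C\ge 2\dim|M_C|\ge 2(p_g-2)$, and since $K_X$ is nef, $K_X^2\ge K_X\cdot M\ge M^2\ge 2p_g-4$. If $|K_X|$ is composed with a pencil, then (after blowing up) $X$ carries a fibration whose general fibre $F$ has genus $g(F)\ge 2$, with $M\equiv aF$ and $a\ge p_g-1$, so $K_X^2\ge K_X\cdot M=a\bigl(2g(F)-2\bigr)\ge 2(p_g-1)\ge 2p_g-4$.

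The single hard step is the Bogomolov--Miyaoka--Yau inequality: its proof needs either the full strength of Yau's solution of the Calabi conjecture in the (orbifold) setting of the canonical model, or Miyaoka's semipositivity theorem for cotangent bundles with the attendant positivity estimates on $\mathbb{P}(\Omega^1_X)$. Everything else---the Castelnuovo bounds, Noether's formula, and the Noether inequality---is standard surface theory: adjunction, Riemann--Roch, Hodge theory and the geometry of linear systems.
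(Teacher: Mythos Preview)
The paper does not actually prove this theorem: it is stated in the Introduction as classical background, attributed to Castelnuovo, Noether, Bogomolov, Miyaoka and Yau, and no argument is supplied. There is therefore no ``paper's own proof'' to compare against.

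Your sketch is correct and follows the standard classical arguments. The reduction of $\tfrac{1}{5}(c_2-36)\le c_1^2$ to $K_X^2\ge 2\chi-6$ via Noether's formula, and then to the Noether inequality $K_X^2\ge 2p_g-4$, is the usual route; your case analysis on the canonical map (not composed with a pencil versus composed with a pencil, Clifford in the first case) is the textbook proof. Deriving $c_2\ge 1$ from BMY rather than from a direct Albanese argument is a legitimate and clean shortcut. For BMY itself you rightly treat it as a black box with pointers to Yau's K\"ahler--Einstein approach and Miyaoka's semipositivity approach; this is exactly the level of detail appropriate for a survey, and indeed the paper later cites Miyaoka's 1977 paper when it uses BMY in Hirzebruch's construction. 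In short, you have supplied a complete and accurate outline where the paper offers only a statement.
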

The last constraint, called the BMY--inequality for short, provides a numerical characterization of the so-called ball-quotient surfaces, namely $c_{1}^{2}(X) = 3c_{2}(X)$ if and only if $X$ is a quotient of the unit complex ball. 

The first natural question that arises is how to find a surface of general type with fixed Chern numbers. As one might predict, this is generally a difficult problem. The difficulty of the problem increases as the Chern slope (or Chern ratio), defined as $c_{1}^{2}(X)/c_{2}(X)$, increases. It is not difficult to find smooth projective surfaces such that $c_{1}^{2}(X) \leq 2c_{2}(X)$. In that context, we have the following interesting result that comes from \cite{Persson}.
\begin{theorem}[Persson]
For any pair of positive integers $(c_{1}^{2},c_{2})$ satisfying   $\frac{1}{5}(c_{2}-36)\leq c_{1}^{2} \leq 2c_{2}$ and $c_{1}^{2} + c_{2} \equiv 0\, (\textrm{mod} \, 12)$, there exists a minimal smooth projective surface $X$ of general type with $c_{1}^{2} = c_{1}^{2}(X)$ and $c_{2}=c_{2}(X)$.  
\end{theorem}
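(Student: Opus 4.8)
The plan is to pass to the "modern" coordinates $\chi := \chi(\mathcal{O}_X)$ and $K^2 := c_1^2(X)$. By Noether's formula $c_1^2+c_2=12\chi$, the divisibility hypothesis $c_1^2+c_2\equiv 0\ (\mathrm{mod}\ 12)$ is precisely the statement that $\chi$ is a positive integer, and, substituting $c_2=12\chi-K^2$, the inequality $\tfrac15(c_2-36)\le c_1^2\le 2c_2$ becomes the band
\[
\max\{1,\,2\chi-6\}\;\le\;K^2\;\le\;8\chi ,
\]
(the extra constraints $c_1^2\ge 1$ and $c_2\ge 1$ being automatic in this range). So it suffices to exhibit, for every lattice point $(\chi,K^2)$ in this band, a minimal surface of general type with those invariants. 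The overall strategy is to assemble a few parametrized families of explicit surfaces whose invariants sweep out overlapping sub-cones of the band, and then to verify by an elementary counting argument that their union captures all but finitely many lattice points, finishing the residual small cases by an explicit table.

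The workhorse family consists of double covers $\pi\colon X\to\widetilde W$ of rational surfaces, where $\widetilde W$ is a blow-up of a Hirzebruch surface $\mathbb{F}_e$ (or of $\mathbb{P}^2$), chosen so that the branch curve becomes smooth: one starts from a reduced curve $B\in|2L|$ on $\mathbb{F}_e$ with at worst prescribed ordinary singularities of even multiplicity, blows up to resolve them, corrects the branch class, and takes the (now smooth) double cover. For a smooth branch curve one has the standard identities
\[
\chi(\mathcal{O}_X)=2\chi(\mathcal{O}_{\widetilde W})+\tfrac12\,\widetilde L\cdot(\widetilde L+K_{\widetilde W}),\qquad
K_X^2=2\,(K_{\widetilde W}+\widetilde L)^2,\qquad K_X=\pi^*(K_{\widetilde W}+\widetilde L),
\]
and on $\mathbb{F}_e$ with $L=a\sigma+bf$ the triple $(e,a,b)$ already yields a multi-parameter family, while each resolved singular point of multiplicity $2k$ contributes a further independent integer "move" in the $(\chi,K^2)$-plane depending only on $k$. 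Two points must be checked: that curves in $|2L|$ with the assigned generic singularities actually exist (dimension count plus a Bertini-type argument), and that, after contracting the $(-1)$-curves created by the resolution, the surface is minimal of general type — this follows from a direct computation showing $K_X^2>0$ and that the pushed-down canonical class is nef on the minimal model.

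These double-plane families do not by themselves reach every lattice point — they land in a proper sublattice and fall short of the edge $K^2=8\chi$ in parts of the range — so I would supplement them with: (i) Horikawa's surfaces carrying a pencil of curves of genus $2$, together with the fibre-sum operation, which translates one along fixed rays, in particular along the Noether line $K^2=2\chi-6$, filling the gaps there; (ii) products $C_{g_1}\times C_{g_2}$ of smooth curves and their free quotients, which populate the opposite edge since $K^2=8\chi$ and $c_2=4\chi$ for such a product; and (iii) (properly elliptic) surfaces and their fibre sums, used to increase $\chi$ in controlled steps while keeping $K^2$ fixed, so as to bridge between the sublattices hit by the previous constructions. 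Combining two transverse families of this type gives a full-dimensional cone of realizable points, and a short induction on $\chi$ — each step a fibre sum with a fixed building block — propagates realizability to all large $\chi$. The genuinely delicate part is this final combinatorial step: one must show that the available moves, with their parity and divisibility constraints, cover the band with no holes, which is most subtle near the two boundary lines and for small $\chi$, where the asymptotic fibre-sum argument does not yet apply and each remaining point must be produced by hand. Preserving minimality and general type through every one of these operations — contracting exactly the right exceptional curves, and checking that fibre sums of general-type pieces remain of general type — is the recurring technical burden throughout.
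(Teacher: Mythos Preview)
The paper does not actually prove this theorem: it is quoted in the introduction as background, attributed to Persson, and justified only by the citation \cite{Persson}. There is therefore no ``paper's own proof'' to compare your proposal against.

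That said, your outline is in the right spirit relative to Persson's original argument: the core of his construction is indeed double covers of Hirzebruch surfaces (and their blow-ups) branched along curves with imposed even-multiplicity singularities, with the parameters $(e,a,b)$ and the singularity data moving the invariants around the $(\chi,K^2)$-plane. Your translation of the band into $\max\{1,2\chi-6\}\le K^2\le 8\chi$ is correct. Where your sketch drifts from Persson's actual paper is in the auxiliary machinery: you invoke fibre sums, products of curves for the line $K^2=8\chi$, and properly elliptic pieces, whereas Persson stays almost entirely within the double-cover framework and handles the boundary and residual cases by refining the branch-curve constructions rather than by importing other families. None of this makes your plan wrong as a strategy, but as written it is only a plausibility sketch: the existence of branch curves with the required singularities, the minimality after contraction, and above all the combinatorial coverage of \emph{every} lattice point (not just a full-dimensional cone) each require substantial work that you have flagged but not carried out.
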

The case where $2c_{2} < c_{1}^{2} \leq 3c_{2}$ is much harder. It is worth noting that Hirzebruch in \cite{Hirzebruch} explicitly constructed examples of ball-quotient surfaces using the full abelian cover of the complex projective plane branched at arrangements of lines. Based on the ideas delivered by Hirzebruch, Sommese in \cite{Sommese} proved the following density result.
\begin{theorem}\label{thm:dense_limits}
Any rational point in $[1/5, 3]$ occurs as a limit of Chern slopes of smooth projective surfaces of general type.
\end{theorem}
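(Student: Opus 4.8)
The plan is to show that the two a priori bounds $1/5$ and $3$ are sharp, by constructing, for every rational $\rho\in[1/5,3]$ and every $\varepsilon>0$, a minimal surface of general type whose Chern slope lies within $\varepsilon$ of $\rho$. (That every limit of Chern slopes already lies in $[1/5,3]$ is immediate from the preceding theorem: Bogomolov--Miyaoka--Yau gives $c_1^2/c_2\le 3$, and Noether gives $c_1^2/c_2\ge\tfrac15-\tfrac{36}{5c_2}$, so the limit is $\ge 1/5$.) I would treat the ranges $[1/5,2]$ and $(2,3]$ separately.

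For $\rho\in[1/5,2]$ nothing new is needed. Persson's theorem realizes every lattice point with $\tfrac15(c_2-36)\le c_1^2\le 2c_2$ and $c_1^2+c_2\equiv0\ (\mathrm{mod}\ 12)$ by a minimal surface of general type; moving along a ray $c_1^2\approx\rho\,c_2$, perturbed to meet the congruence, produces surfaces with $c_1^2/c_2\to\rho$ as $c_2\to\infty$, and the endpoints $1/5$ and $2$ arise the same way. So the whole difficulty sits in $(2,3]$.

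There I would use Hirzebruch's abelian-cover construction \cite{Hirzebruch}. Given an arrangement $\mathcal A$ of $d$ lines in $\mathbb P^2$ that is not a pencil, with $t_k$ points of multiplicity $k$, and an integer $N\ge2$, one forms the abelian cover $Y_N\to\mathbb P^2$ with Galois group $(\mathbb Z/N)^{d-1}$ branched along $\mathcal A$ and resolves the quotient singularities lying over the multiple points of $\mathcal A$, obtaining a smooth surface $\widetilde Y_N$ (the same works over any smooth rational surface and any arrangement of smooth rational curves). The crucial computation, via the Galois action together with the resolution data, expresses $c_1^2(\widetilde Y_N)$ and $c_2(\widetilde Y_N)$ as polynomials in $N$ of the same degree, with leading coefficients the logarithmic Chern numbers $\overline c_1^2(\mathbb P^2,\mathcal A)$ and $\overline c_2(\mathbb P^2,\mathcal A)$, explicit functions of $d$ and the $t_k$. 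Hence $c_1^2(\widetilde Y_N)/c_2(\widetilde Y_N)\to\sigma(\mathcal A):=\overline c_1^2(\mathbb P^2,\mathcal A)/\overline c_2(\mathbb P^2,\mathcal A)$ as $N\to\infty$, and for $N\gg0$ the $\widetilde Y_N$ are of general type and, after passing to minimal models, still have Chern slopes tending to $\sigma(\mathcal A)$. Hirzebruch's ball-quotient covers \cite{Hirzebruch} are associated with arrangements satisfying $\sigma(\mathcal A)=3$, so $\rho=3$ occurs (indeed is attained).

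It then remains to produce, for each rational $\rho\in(2,3)$ and each $\varepsilon>0$, a (possibly more general) rational-surface/curve-arrangement pair whose log-Chern slope is within $\varepsilon$ of $\rho$. Here I would start from one of Hirzebruch's ball-quotient arrangements and perform elementary moves — blowing up a point of the arrangement (the construction applies verbatim over the blown-up surface), or adjoining a curve in sufficiently general position — each changing the logarithmic Chern numbers by a controlled amount and moving $\sigma$ downward in small steps; iterating sweeps $\sigma$ densely through $(2,3)$, after which one fixes an arrangement with $\sigma$ near $\rho$ and lets $N\to\infty$. Combined with the Persson range, this places all of $[1/5,3]$ in the closure of the set of Chern slopes of minimal surfaces of general type, which is the assertion. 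The main obstacle is the exact evaluation of $c_1^2(\widetilde Y_N)$ and $c_2(\widetilde Y_N)$ — one must control how the abelian cover and its resolution behave at each singular point, which is where the multiplicities enter — together with the combinatorial verification that the log-Chern slopes of the modified arrangements really are dense in $[2,3]$, and the check that contracting the $(-1)$-curves needed to reach a minimal model alters the limiting slope only by a lower-order amount.
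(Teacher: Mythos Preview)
The paper does not prove this theorem; it is quoted in the introduction as a result of Sommese with a reference to \cite{Sommese}, so there is no in-paper proof to compare against.

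Your sketch, however, contains a real gap. You write that ``Hirzebruch's ball-quotient covers are associated with arrangements satisfying $\sigma(\mathcal A)=3$.'' This conflates two different quantities. The ball quotients arise from the abelian cover $Y_n$ at a \emph{specific finite} exponent $n\in\{2,3,5\}$, where indeed $c_1^2(Y_n)=3c_2(Y_n)$; but $\sigma(\mathcal A)$ is by definition the \emph{limit} of $c_1^2(Y_n)/c_2(Y_n)$ as $n\to\infty$, and the paper proves (Theorem~\ref{somm}) that for every complex line arrangement this limit satisfies $\sigma(\mathcal A)\le 8/3$, with equality only for the dual Hesse arrangement. For instance, the Hesse arrangement of $12$ lines yields a ball quotient at $n=3$, yet its characteristic number is $5/2$. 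Consequently your mechanism for approaching $\rho\in(8/3,3)$ --- choose an arrangement with $\sigma$ near $\rho$ and send $N\to\infty$ --- cannot work: no plane line arrangement has $\sigma>8/3$, and the ``elementary moves'' you propose (blow-ups, adding general curves) only push $\sigma$ downward, so they cannot manufacture one either.

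What Sommese actually exploits is the dependence on \emph{both} the arrangement and the finite exponent $n$: the ratio $c_1^2(Y_n)/c_2(Y_n)$ is a non-constant rational function of $n$ whose values at small $n$ can substantially exceed the limiting value $\gamma(\mathcal L)$. Varying the two parameters together is what fills out $(2,3]$. Your outline collapses this to the large-$N$ limit alone and thereby loses the entire range $(8/3,3)$.
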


\begin{remark}
Since $c_1^2(X)$ and $c_2(X)$ are integers, Chern slopes $c_1^2(X)/c_2(X)$ of smooth surfaces of general type are rational numbers. Theorem \ref{thm:dense_limits} asserts that \textit{every} rational number in $[1/5,3]$ appears as a limit of such slopes. In particular, the set of realized slopes is dense in $[1/5,3]$. See also Section \ref{sec:line_arr} for explicit families obtained from abelian covers. In Example \ref{ex:limit_eight_three} we present how $8/3$ can be obtained as a proper limit of a sequence of slopes.
\end{remark}

Now we would like to explain the importance of Hirzebruch's paper \cite{Hirzebruch} for the development of log-surfaces, namely his idea to study the so-called characteristic numbers of line arrangements. These numbers are nothing else than log-Chern slopes of the associated log-surfaces $(X, \widetilde{\mathcal{L}})$, where $X$ denotes the blowing up of the complex projective plane along the intersection points $p \in \textrm{Sing}(\mathcal{L})$ of a given line arrangement $\mathcal{L} \subset \mathbb{P}^{2}$ with multiplicity $m_{p}\geq 3$, and $\widetilde{\mathcal{L}}$ is the reduced total transform of $\mathcal{L}$. In particular, Hirzebruch showed that for a real line arrangement $\mathcal{L} \subset \mathbb{P}^{2}$, i.e., a line arrangement defined over the real numbers, its characteristic numbers is $\leq 5/2$, and equality holds if and only if $\mathcal{L}$ is simplicial. This result builds a foundation of our survey and our research. Using Hirzebruch's idea, we focus on a special variant of the geography problem, namely \textbf{we work with log-surfaces for which their log Chern numbers are directly determined by the combinatorics of the curve arrangements}. This setting, if it looks at the first glance quite restrictive, is not well explored, and we still do not know much about the geography of such surfaces. 

Here is the outline of our survey which is written in the spirit of \cite{ML}. The first part is a solid preparation where we explain all necessary details regarding the geometry of log-surfaces. Then we pass to geography problem where we start with the case when our building blocks are line arrangements in the complex projective plane as this is the most classical case. We provide an outline of the state of the art and we formulate some very challenging problems. Then we focus on the case when our building blocks are arrangements of smooth rational curves that admit only ordinary singularities. In particular, we outline results devoted to conic-line arrangements in the complex projective plane. In the last part of our survey, we focus on log-surfaces constructed using arrangements of rational curves admitting only ordinary singularities in $K3$ surfaces. We hope that results presented in our survey will deliver a platform for further research devoted to the geography problem of log-surfaces.

We work exclusively over the complex numbers.
\section{Basic notions from log geometry}

We start with the following fundamental definition.
\begin{definition}
Let $X$ be a smooth complete variety and let $D \subset X$ be a simple normal crossing divisor. A log variety is a smooth variety $U$ which is of the form $U = X \setminus D$. 
\end{definition}
Usually one refers to $U$ as the pair $(X,D)$. We call it log-surface if its dimension is equal to two. 

Let $(X,D)$ be a log variety. In order to study the geography problem for such objects, we need to define the Chern numbers of the log variety $(X,D)$. First of all, the analogue of a canonical divisor will be $K_{X}+D$, and this idea comes from the following modification of the sheaf of differentials that was proposed by Deligne \cite{Deligne}. 
\begin{definition}
Let $X$ be a smooth complete variety defined over an algebraically closed field $k$ and let $D = \sum_{j=1}^{n}D_{j}$ be a reduced normal crossing divisor, i.e., a divisor with non-singular components $D_{j}$ intersecting each other transversally. Define $\tau : U = X \setminus D \rightarrow X$ and
\[\Omega^{k}_{X}(\star D) = \mathop{\lim_{\longrightarrow}}_{v}\Omega_{X}^{k}(v\cdot D) =  \tau_{*}\Omega^{k}_{U}.\] 
We recall that $\Omega^{\bullet}_{X}(\star D)$ with the differential $\partial$ creates a complex.

Then $\Omega^{k}_{X}(\textrm{log}\,D)$ is the subsheaf of $\Omega_{X}^{k}(\star D)$ of differential forms with logarithmic poles along $D$, i.e., if $V \subseteq X$ is open, then
\[\Gamma(V, \Omega^{k}_{X}(\textrm{log}\,D)) = \{\alpha \in \Gamma(V, \Omega^{k}_{X}(\star D): \alpha \text{ and } \partial\alpha \text{ have simple poles along } D\}.\]
\end{definition}
\begin{lemma}[{\cite[2.2. Properties]{EV}}]
Let $X$ be a smooth complete variety and let $D \subset X$ be a simple normal crossing divisor. Then the sheaf of logarithmic differentials along $D$, denoted by $\Omega_{X}^{1}(\textrm{log} \, D)$, is an $\mathcal{O}_{X}$-submodule of $\Omega_{X}^{1} \otimes \mathcal{O}_{X}(D)$ that satisfies the following conditions:
\begin{itemize}
\item $\Omega_{X}^{1}(\textrm{log} \, D)_{|_{X\setminus D}} = \Omega^{1}_{X\setminus D}$,
\item for every closed point $P \in D$ one has
\[\omega_{P} \in \Omega_{X}^{1}(\textrm{log} \, D)_{p} \text{ if and only if } \omega_{P} = \sum_{i=1}^{s} a_{i}\frac{dz_{i}}{z_{i}} + \sum_{j=s+1}^{\textrm{dim} \,X} b_{j}dz_{j},\]
where $(z_{1}, \ldots, z_{\textrm{dim}X})$ is a local coordinate system around $P$ for $X$, and $\{z_{1} \cdots z_{s} = 0\}$ defines $D$ around point $P$.
\end{itemize}
\end{lemma}
Notice that $\Omega_{X}^{1}(\textrm{log}\,D)$ is a locally free sheaf of rank $\textrm{dim} \, X$ and $\Omega_{X}^{\textrm{dim}\, X}(\textrm{log}\,D) \cong \mathcal{O}_{X}(K_{X}+D)$.

\begin{definition}
The logarithmic Kodaira dimension of $(X,D)$ is defined as the maximum dimension of the image of $|m(K_{X}+D)|$ for all $m > 0$, or is $-\infty$ if $|m(K_{X}+S)|$ is empty for all $m$. We will denote the log Kodaira dimension of $(X,D)$ as $\overline{\kappa}(X,D)$.
\end{definition}
\begin{definition}
The $m$-th log plurigenus of a log variety $(X,D)$ is defined as $\overline{P}_{m}(X,D) = h^{0}(X, m(K_{X}+D))$.
\end{definition}
The following result is a fundamental one for log-varieties.
\begin{theorem}[{\cite[p. 60]{MM02}}]
Let $X_{0}$ be a smooth non-complete variety. Let $(X,D)$ be a log variety such that $X_{0} \cong X \setminus D$. Then the $m$-th logarithmic plurigenus of $(X,D)$ is a well-defined invariant of $X_{0}$, i.e., it means that for another such pair $(X',D')$ we have $\overline{P}_{m}(X,D) = \overline{P}_{m}(X',D')$. This implies that the logarithmic Kodaira dimension of $X_{0}$ is well-defined.
\end{theorem}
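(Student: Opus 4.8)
The plan is to reproduce Iitaka's argument: reduce to the case where one compactification dominates the other by blowing down part of the boundary, and then use the logarithmic ramification formula together with the projection formula to identify the two spaces of log-pluricanonical forms.

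\textbf{Step 1: reduction to a birational morphism over the boundary.} Given two log compactifications $(X,D)$ and $(X',D')$ of $X_0$, the identification of $X_0$ extends to a birational map $\varphi\colon X\dashrightarrow X'$. Working over $\mathbb{C}$, I would take a resolution $Y$ of the closure of the graph of $\varphi$ in $X\times X'$, obtaining proper birational morphisms $p\colon Y\to X$ and $p'\colon Y\to X'$ that restrict to the identity on $X_0$. Since $\varphi$ is an isomorphism over $X_0$, the morphism $p$ is an isomorphism over $X_0\subseteq X$, so every $p$-exceptional prime divisor lies over $D$; blowing $Y$ up further along its boundary (which affects neither the open part $X_0$ nor the argument below) I may assume $E:=(p^{-1}D)_{\mathrm{red}}=Y\setminus X_0$ is simple normal crossing, and symmetrically for $p'$. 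It then suffices to prove $\bar P_m(Y,E)=\bar P_m(X,D)$ whenever $p\colon Y\to X$ is a proper birational morphism of smooth complete varieties restricting to an isomorphism $Y\setminus E\cong X\setminus D$ with $D$ and $E$ simple normal crossing; applying this to $p'$ as well and using transitivity then gives the theorem.

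\textbf{Step 2: the logarithmic ramification formula.} A local computation in coordinates adapted to the normal crossing divisors --- the same kind of computation that underlies the description of $\Omega^1_X(\log D)$ --- should give
\[
K_Y+E \;=\; p^{*}(K_X+D)\;+\;B,\qquad B=\sum_i c_i F_i,\ \ c_i\in\mathbb{Z}_{\geq 0},
\]
the sum running over the $p$-exceptional prime divisors $F_i$. The essential point is that the coefficients $c_i$ are nonnegative: this reflects the fact that the log smooth (hence log canonical) pair $(X,D)$ has discrepancy $\geq -1$ along every exceptional divisor, so that after adding the reduced exceptional divisors to the boundary the coefficients become $\geq 0$. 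In particular $B$ is effective, and being $p$-exceptional its image $p(B)$ has codimension $\geq 2$ in $X$.

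\textbf{Step 3: pushing forward.} The key lemma is that for a proper birational morphism $p\colon Y\to X$ with $X$ normal and any effective $p$-exceptional divisor $B$ one has $p_{*}\mathcal{O}_Y(B)=\mathcal{O}_X$: a local section of $\mathcal{O}_Y(B)$ over $p^{-1}(U)$ is a rational function regular away from $\mathrm{Supp}(B)$, hence a function on $U$ minus a closed subset of codimension $\geq 2$, which extends by normality. Applying the projection formula with the line bundle $L:=m(K_X+D)$ and the divisor $mB$,
\[
p_{*}\,\mathcal{O}_Y\bigl(m(K_Y+E)\bigr)\;=\;p_{*}\bigl(p^{*}\mathcal{O}_X(L)\otimes\mathcal{O}_Y(mB)\bigr)\;=\;\mathcal{O}_X(L)\otimes p_{*}\mathcal{O}_Y(mB)\;=\;\mathcal{O}_X(L),
\]
and taking global sections yields $\bar P_m(Y,E)=h^0(X,m(K_X+D))=\bar P_m(X,D)$. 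Combined with Step 1 this shows $\bar P_m$ depends only on $X_0$; and since $\bar\kappa(X,D)$ is determined by the graded ring $\bigoplus_m H^0(X,m(K_X+D))$ --- whose graded pieces, with their multiplication, are exactly the spaces of log-pluricanonical forms on $X_0$ --- the log Kodaira dimension is likewise an invariant of $X_0$.

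\textbf{Main obstacle.} The delicate points are the bookkeeping in Step 1 --- making sure that every modification is performed over the boundary, so that the open part stays identified with $X_0$ --- and, above all, the effectivity of $B$ in Step 2: this is exactly where log smoothness of $(X,D)$ enters, and it is the reason the statement would fail for a general (non-log-canonical) boundary. Once $B\geq 0$ is established, Step 3 is a formal consequence of the projection formula and normality.
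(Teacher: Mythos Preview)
Your argument is correct and is essentially Iitaka's classical proof of this result. Note, however, that the paper itself does not supply a proof: the theorem is stated in Section~2 as a foundational fact from the theory of log varieties and is not argued further, so there is no ``paper's proof'' to compare against. Your Steps~1--3 (resolution of the graph, the log ramification formula with effective exceptional discrepancy coming from log canonicity of the SNC pair, and the push-forward via the projection formula) constitute exactly the standard justification one finds in Iitaka's book and in subsequent treatments, and the obstacles you flag are the genuine ones.
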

Now we want to define the most important, from our perspective, invariants of log varieties.
\begin{definition}
\label{chgen}
The logarithmic Chern classes of the log variety $(X,D)$ are defined as
\[\overline{c}_{i}(X,D) = c_{i}(\Omega_{X}^{1}(\textrm{log}\,D)^{\vee}) \text{ for } 1 \leq i \leq \textrm{dim}\, X.\]
\end{definition}
After such a general preparation, we now focus on the case of surfaces.

Let $(X,D)$ be a log-surface, i.e., $X$ is a smooth projective surface and $D = \sum_{i=1}^{n} D_{i}$, where $D_{i}$ are smooth projective curves and $D$ has at most ordinary double points as singularities. The Chern numbers of such a log-surface $(X,D)$ are the following:
\begin{equation}
\label{chnumb}
\overline{c}_{1}^{2}(X,D) = (K_{X}+D)^2 \text{ and } \overline{c}_{2}(X,D) = e(X) - e(D),
\end{equation}
where $e(\cdot)$ denotes the holomorphic Euler characteristic.
\begin{remark}
In the case of surfaces, it is worth noting that Definition \ref{chgen} simplifies to \eqref{chnumb}. Indeed, we write $E:=\Omega_X^1(\log D)$. First, $\det E\simeq \mathcal{O}_X(K_X+D)$, hence
$$c_1(E^\vee)=-c_1(E)=-(K_X+D),$$ and therefore $c_1^2(E^\vee)=(K_X+D)^2$. For $c_2$, we can use the exact sequence 
\[0\to \Omega_X^1 \to \Omega_X^1(\log D)\to \oplus_i \mathcal{O}_{D_i}\to 0\]
and multiplicativity of total Chern classes to get
\begin{align*}
c_1(E)&=K_X+D,\\
c_2(E)&=c_2(X)+K_X\cdot D+\sum_i D_i^2+\sum_{i<j}D_i\cdot D_j.
\end{align*}
Since $e(X)=c_2(X)$, and for smooth curves $D_i$ we have $e(D_i)=-(K_X+D_i)\cdot D_i$ by adjunction, and inclusion--exclusion for an SNC divisor with only double points gives \[e(D)=\sum_i e(D_i)-\sum_{i<j}D_i\cdot D_j= -K_X\cdot D-\sum_i D_i^2-\sum_{i<j}D_i\cdot D_j.\]
Hence $c_2(E)=e(X)-e(D)$. Finally, recall that $c_2(E^\vee)=c_2(E)$ for rank‑two bundles.
\end{remark}
Form the perspective of our purposes in this survey, the following definition is fundamental.
\begin{definition}
The log Chern slope of a given log-surface $(X,D)$ is defined as
\[E(X,D) = \overline{c}_{1}^{2}(X,D) / \overline{c}_{2}(X,D).\]
\end{definition}
It is natural to ask whether there are some constraints on the log Chern slopes that are analogous to those proved for smooth complex projective surfaces. We have the following result due to Sakai \cite{Sakai}. Let us recall that a divisor $D \subset X$ is \textit{semi-stable} if $D$ is a simple normal crossing divisor and any $\mathbb{P}^{1}$ contained in $D$ intersects other components in more than one point. 
\begin{theorem}[Sakai]
Let $(X,D)$ be a log-surface with $D$ being semi-stable. Suppose that the logarithmic Kodaira dimension of $(X,D)$ is equal to $2$, then
\[\overline{c}_{1}^{2}(X,D) \leq 3 \overline{c}_{2}(X,D).\]
\end{theorem}
Since the above result relies heavily on Miyaoka's proof of his inequality \cite{M84}, we refer to this result as the Miyaoka--Sakai inequality. 

Up to now, our log-surfaces are constructed by taking suitable divisors which are arrangements of curves with only nodes as singularities. Now we would like to introduce a natural construction that allows us to use \textbf{any} arrangement of smooth curves admitting ordinary singularities.
\begin{definition}\label{def:trans}
Let $X$ be a smooth complex projective surface. We say that an arrangement of curves $\mathcal{D} = \{C_{1}, \ldots, C_{n}\}$ in $X$ is a transversal arrangement of curves if
\begin{itemize}
\item all components $C_{i}$ are smooth and irreducible curves;
\item the curves $C_{i}, C_{j}$ with $i\neq j$ are either disjoint or intersect transversally;
\item the arrangement $\mathcal{D}$ is connected, i.e., the union $C_{1} \cup \ldots \cup C_{n}$ is a connected curve in~$X$.
\end{itemize}
\end{definition}
Note that our definition \textbf{does not exclude} the possibility that more than two curves meet in a point. In other words, in such an arrangement of curves all the intersection points are \textbf{ordinary singularities}.

\begin{figure}[h]
\centering
\begin{tikzpicture}[line cap=round,line join=round,>=triangle 45,x=1.0cm,y=1.0cm,scale=0.15]
\clip(-21,-12) rectangle (26,13);
\draw [line width=1.pt] (0.0,-12.00) -- (0.,12.68);
\draw [line width=1.pt,domain=-21.0:26] plot(\x,{(-0.-0.*\x)/1.});
\draw [line width=1.pt] (3.0,-12.00) -- (3.,12.68);
\draw [line width=1.pt,domain=-21.0:26] plot(\x,{(--3.-1.*\x)/-1.});
\end{tikzpicture}
\caption{A transversal arrangement $\mathcal{D}=\{C_1,\dots,C_n\}$ of smooth curves on a smooth surface $X$: any two distinct components are either disjoint or meet transversally, several curves may meet at one point (ordinary $k$-fold points with $k\ge2$), and the union $\bigcup_i C_i$ is connected. Blowing up the points with $k\ge3$ yields the associated log surface $(Y,\widetilde D)$ with $\widetilde D$ a simple normal crossings divisor (cf. Definition \ref{def:trans} and Proposition \ref{chernn}).}
\end{figure}

Denote by $D$ the associated divisor, i.e., $D=\sum_{i=1}^{n} C_{i}$, and we denote by $t_{k}(\mathcal{D})=t_{k}$ the number of $k$-fold points of $\mathcal{D}$, i.e., points in $X$ where exactly $k$ curves from $\mathcal{D}$ meet. The last condition about the connectedness of our arrangement is a technical assumption formulated to avoid possible pathological situations.

Let $(X,D)$ be a pair with $X$ being a smooth complex projective surface and $D$ be a divisor associated to a transversal arrangement of curves $\mathcal{D}$\footnote{For simplicity, we will identify transverse arrangements of curves with their corresponding divisors}. Now we associate with the pair $(X,D)$ a log-surface. To this end, let $\sigma : Y \longrightarrow X$ be the blowing-up of $X$ at all $k$-fold points of $\mathcal{D}$ with $k\geq 3$ and let $\widetilde{D}$ be the reduced total transform of $D$ under $\sigma$. Hence, $\widetilde{D}$ includes the exceptional divisors over the $k\geq 3$-fold points. Consider now $\widetilde{D}$ as an arrangement contained in $Y$ and observe that $\widetilde{D}$ is a simple normal crossing divisor. Then $(Y,\widetilde{D})$ is a log-surface associated with the pair $(X, D)$. 
\begin{proposition}[{\cite[p. 27]{Urzua}}]
\label{chernn}
In the setting as above, let $(Y, \widetilde{D})$ be a log-surface associated with the pair $(X,D)$. The log Chern numbers of the pair are the following:
\begin{equation}
\overline{c}_{1}^{2}(Y,\widetilde{D}) = c_{1}^{2}(X) - \sum_{i=1}^{n}C_{i}^{2} + \sum_{k \geq 2} (3k-4)t_{k} + 4\sum_{i=1}^{n}(g(C_{i})-1),
\end{equation}
\begin{equation}
\overline{c}_{2}(Y,\widetilde{D}) = c_{2}(X) + \sum_{k\geq 2}(k-1)t_{k} + 2\sum_{i=1}^{n}(g(C_{i})-1),
\end{equation}
where $g(C)$ denotes the geometric genus of a curve $C$.
\end{proposition}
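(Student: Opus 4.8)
The plan is to read off the two log Chern numbers directly from their definitions, namely $\bar c_2(Y,\widetilde D)=e(Y)-e(\widetilde D)$ and $\bar c_1^{\,2}(Y,\widetilde D)=(K_Y+\widetilde D)^2$, and to translate each ingredient into the combinatorial data $t_k$, $C_i^2$, $g(C_i)$ by means of the standard blow-up formulas.

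First I would treat $\bar c_2$. Write $N=\sum_{k\ge 3}t_k$ for the number of points blown up; since each is a reduced point of the smooth surface $X$, one has $e(Y)=e(X)+N=c_2(X)+\sum_{k\ge 3}t_k$. Next I describe $\widetilde D$ as an SNC curve. Its irreducible components are the strict transforms $\widetilde C_i$, each of which is isomorphic to the smooth curve $C_i$ (blowing up points of $X$ induces an isomorphism on each $C_i$), together with one exceptional line $E_p\cong\mathbb P^1$ over every blown-up point $p$; thus $e$ of the components sums to $\sum_i\bigl(2-2g(C_i)\bigr)+2\sum_{k\ge 3}t_k$. Its nodes are exactly the $t_2$ ordinary double points of $D$, which are not blown up and survive, together with, over each blown-up $k$-fold point, the $k$ transversal intersections of $E_p$ with the strict transforms of the $k$ curves through $p$; hence $\#\{\text{nodes of }\widetilde D\}=t_2+\sum_{k\ge 3}k\,t_k$. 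Using additivity of the topological Euler characteristic for an SNC curve, $e(\widetilde D)=\sum_{\text{comp.}}e-\#\{\text{nodes}\}$, one obtains $e(\widetilde D)$ explicitly; subtracting from $e(Y)$ and collecting terms (the $k=2$ contribution $t_2$ fits the pattern $(k-1)t_k$) yields the formula for $\bar c_2$.

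For $\bar c_1^{\,2}$ I would first establish the clean identity $K_Y+\widetilde D=\sigma^*(K_X+D)-\sum_{k_p\ge 3}(k_p-2)E_p$. This follows from $K_Y=\sigma^*K_X+\sum_pE_p$ together with the fact that the reduced total transform satisfies $\widetilde D=\sigma^*D-\sum_p(k_p-1)E_p$ (the strict transform of $D$ drops the full multiplicity $k_p=\mathrm{mult}_p(D)$, and each $E_p$ is then added back with coefficient one). Squaring and using the projection formula together with $E_p^2=-1$, $E_p\cdot E_q=0$ for $p\neq q$ and $\sigma^*(\,\cdot\,)\cdot E_p=0$ gives $(K_Y+\widetilde D)^2=(K_X+D)^2-\sum_{k\ge 3}(k-2)^2t_k$. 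It then remains to expand $(K_X+D)^2=K_X^2+2K_X\cdot D+D^2$ on $X$: here $K_X^2=c_1^2(X)$; adjunction on each smooth $C_i$ gives $K_X\cdot C_i=2g(C_i)-2-C_i^2$; and $D^2=\sum_iC_i^2+2\sum_{i<j}C_i\cdot C_j=\sum_iC_i^2+\sum_{k\ge 2}k(k-1)t_k$, because each $k$-fold point of the transversal arrangement contributes $\binom{k}{2}$ simple intersections to $\sum_{i<j}C_i\cdot C_j$. Assembling these and invoking the elementary identity $k(k-1)-(k-2)^2=3k-4$ (valid for $k\ge 3$, and the $k=2$ term $2t_2$ likewise equals $(3\cdot 2-4)t_2$) collapses the two $t_k$-sums into $\sum_{k\ge 2}(3k-4)t_k$, which is the asserted formula.

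The only genuinely delicate point, and the step I would be most careful with, is the bookkeeping of the singular points of $D$ under $\sigma$: the ordinary double points are deliberately left unblown, so they persist as nodes of $\widetilde D$ and feed the $k=2$ terms, whereas every $k$-fold point with $k\ge 3$ simultaneously adds a $\mathbb P^1$ to $Y$, a new $\mathbb P^1$ component plus $k$ new nodes to $\widetilde D$, and multiplicity $k_p$ to $\sigma^*D$. Keeping these three effects separate is exactly what makes the coefficients $(3k-4)$ and $(k-1)$ emerge; everything else is routine intersection theory on a blow-up.
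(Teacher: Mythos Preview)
Your argument is correct and is, in fact, the standard computation for these log Chern numbers. Note, however, that the paper does not supply its own proof of this proposition: it is quoted from Urz\'ua's thesis \cite[p.~27]{Urzua} and used as a black box throughout. There is therefore nothing in the paper to compare against beyond the statement itself, and your derivation is exactly the kind of verification one would expect to find in the cited source.

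A couple of minor remarks on presentation. First, the statement in the paper contains an obvious typo (the second displayed equation is labelled $\bar c_1$ when $\bar c_2$ is meant); your computation correctly targets $\bar c_2(Y,\widetilde D)=e(Y)-e(\widetilde D)$. Second, in the key identity $K_Y+\widetilde D=\sigma^*(K_X+D)-\sum_p(k_p-2)E_p$ it might be worth spelling out the one-line check: with $\sigma^*D=\widetilde D_{\mathrm{str}}+\sum_p k_pE_p$ and $\widetilde D=\widetilde D_{\mathrm{str}}+\sum_p E_p$, one has $\sigma^*D-\widetilde D=\sum_p(k_p-1)E_p$, and combining this with $K_Y=\sigma^*K_X+\sum_pE_p$ gives the claim. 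Everything else in your write-up is clean and the bookkeeping of the $k=2$ versus $k\ge 3$ points is handled correctly.
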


We will be interested in extremal arrangements, both in the sense of combinatorics and in our desire to have the log Chern slopes of $(Y, \widetilde{D})$ as close to $3$ as possible, according to the Miyaoka--Sakai inequality. The most important problem in the geography of log-surfaces is to decide whether the Miyaoka--Sakai inequality is really optimal, i.e. whether we can find a log-surface with a log Chern slope as close to $3$ as possible. Based on our practical experience, we know that this is an extremely difficult problem. In almost all of the cases we studied, it was impossible. It is a good moment to recall the following (folkloric) problem, which serves as our leitmotif.
\begin{problem}
\label{main}
Let $X$ be a smooth complex projective surface and let $D$ be a transverse arrangement of $n\geq 2$ rational curves. What shall we say about $E(Y,\widetilde{D})$ of the associated surface $(Y,\widetilde{D})$? In particular, is it true that $E(Y,\widetilde{D}) \leq 8/3$?
\end{problem}
\begin{remark}
If $(X,D)$ is a pair such that $D$ is a transversal arrangement with only ordinary double points as singularities, then we will use the convention that $(Y,\widetilde{D})$ coincides with $(X,D)$.
\end{remark}
At first glance, the number $8/3$, which should be an upper bound for the log Chern slopes, seems to appear out of nowhere. However, as we will see in the course of our survey, this number is indeed the upper bound in many cases. Thus, this folkloric expectation has strong empirical support.

\section{Line arrangements and their characteristic numbers}\label{sec:line_arr}
In this section we present a detailed outline on the characteristic numbers of line arrangements, or just log Chern slopes of log-surfaces, associated with pairs of the form $(\mathbb{P}^{2},\mathcal{L})$, where $\mathcal{L} \subset \mathbb{P}^{2}$ is a line arrangement. It is obvious that a line arrangement in $\mathbb{P}^{2}$ are transverse arrangement and we are able to use all machinery introduced previously. Let us start with a short historical overview. One of the first papers devoted to log-surfaces associated with line arrangements is Iitaka's paper \cite{Iitaka}. As it turned out some years later, the main result of this paper is not correct, and we will explain this error in detail later. Then log-surfaces appeared in the seminal paper of Hirzebruch \cite{Hirzebruch}, but not in a direct way, where we can find a beautiful construction of ball quotient surfaces, i.e., minimal complex projective surfaces of general type satisfying the BMY--inequality. The main ingredient of Hirzebruch's construction is the full abelian cover of $\mathbb{P}^{2}$ branched at line arrangements. Let us recall this construction, since it will be very useful in the course of this section.

Let $\mathcal{L} = \{\ell_{1}, \ldots, \ell_{d}\} \subset \mathbb{P}^{2}$ be an arrangement of $d\geq 6$ lines\footnote{This technical assumption is used directly in Hirzebruch's proof to demonstrate the effectiveness of a specific divisor \cite[p. 127]{Hirzebruch}.}. We assume here that $t_{d}(\mathcal{L}) = 0$, which means that our arrangement is not a pencil of lines. Let us denote by $s_{i}$ the defining linear forms of $\ell_{i}$, i.e., $\ell_{i}$ is the zero locus of $s_{i}$. Consider the following map 
\[f : \mathbb{P}^{2} \ni x \mapsto (s_{1}(x): \ldots : s_{d}(x)) \in \mathbb{P}^{d-1}.\]
Obviously $f$ is well-defined since, by the assumption that $t_{d}(\mathcal{L})=0$, at every point $x$ at least one of the $s_{j}(x)$'s is non-zero.
Now we use the standard Kummer covering
\[\textrm{Km}_{n}: \mathbb{P}^{d-1}\ni (y_{1}: \ldots :y_{d}) \mapsto (y_{1}^{n}: \ldots : y_{d}^{n}) \in \mathbb{P}^{d-1},\]
where the integer $n\geq 2$ is called \textit{the exponent} of $\textrm{Km}_{n}$. This covering is of degree $n^{d-1}$ with the Galois group $(\mathbb{Z}/n\mathbb{Z})^{d-1}$, and this covering is branched along $y_{1} \cdot \ldots \cdot y_{d} = 0$. Our main object of interest is the following fiber product:
\begin{equation}
\label{XN}
X_{n} := \mathbb{P}^{2} \times_{\mathbb{P}^{d-1}} \mathbb{P}^{d-1}= \{(x,y) \in \mathbb{P}^{2} \times \mathbb{P}^{d-1} : f(x) = \textrm{Km}_{n}(y)\}.
\end{equation}
Since there exists a projective transformation on $\mathbb{P}^{2}$ such that $\ell_{1} = \{x_{1}=0\}$, $\ell_{2} = \{x_{2} = 0\}$, and $\ell_{3} = \{x_{3} = 0 \}$, we can study $X_{n}$ by looking at this surface as an embedded object in $\mathbb{P}^{d-1}$. Indeed, $X_{n}$ is defined by equations
\[
X_{n} = \{ (y_{1}: \ldots :y_{d})\in \mathbb{P}^{d-1} : y_{j}^{n} = s_{j}(y_{1}^{n}, y_{2}^{n}, y_{3}^{n}) \,\, \textrm{for} \,\, j \in \{4, \ldots ,d\} \}.
\]
Our surface $X_{n}$ is given by $(d-3)$ homogeneous equations in $\mathbb{P}^{d-1}$, which means that $X_{n}$ is a complete intersection. As Hirzebruch's show in \cite{Hirzebruch}, $X_{n}$ is singular over a point $p$ of the arrangement $\mathcal{L}$ if and only if $p$ is a point of multiplicity $\geq 3$. Let $Y_{n}$ be desingularization of $X_{n}$. Since $Y_{n}$ is a smooth complex projective surface, we can compute its Chern numbers: 
\[K^{2}_{Y_{n}}/n^{d-3}= n^{2}(9-5d + 3f_{1} - 4f_{0}) + 4n(d-f_{1}+f_{0})+f_{1}-f_{0}+d+t_{2},\]
\[e(Y_{n})/n^{d-3} = n^{2}(3-2d+f_{1}-f_{0}) + 2n(d-f_{1}+f_{0}) + f_{1}-t_{2},\]
where $f_{0} := \sum_{r\geq 2} t_{r}$ and $f_{1}:=\sum_{r\geq 2} rt_{r}$, and we refer to \cite[pp. 123 -- 125]{Hirzebruch} or \cite[Chapter 24]{CCC} to see how these computations are performed. 
We are in the middle of the discussion, so in order to give the reader more feeling, we should discuss some additional steps contained in Hirzebruch's work. As we have mentioned, Hirzebruch wanted to construct new examples of ball-quotient surfaces, and in order to do so, he applied his construction to various classically known examples of line arrangements and different exponents. By very detailed and technical considerations presented in \cite{BHH87}, the surface $Y_{n}$ can be a ball-quotient only if $n \in \{2,3,5\}$, which is a very deep observation. Next, not every line arrangement leads us to the situation where $Y_{n}$ is a surface of general type or with non-negative Kodaira dimension, so Hirzebruch has found some mild necessary conditions for line arrangements that guarantee that $Y_{n}$ is a surface with non-negative Kodaira dimension. More precisely, for $n=2$, one must have $t_{d}(\mathcal{L}) = t_{d-1}(\mathcal{L}) = t_{d-2}(\mathcal{L}) = 0$. For $n\geq 3$, it is sufficient that $t_{d}(\mathcal{L}) = t_{d-1}(\mathcal{L}) = 0$. After this digression, let us come back to the construction. Assuming that $Y_{n}$ has non-negative Kodaira dimension, we can use the BMY--inequality \cite{Miyaoka77} and one has $K^{2}_{Y_{n}} \leq 3e(Y_{n})$. This implies that that the following Hirzebruch polynomial
\[H_{\mathcal{L}}(n) = \frac{3e(Y_{n}) - K^{2}_{Y_{n}}}{n^{d-3}} = n^{2}(f_{0}-d) + 2n(d-f_{1}+f_{0}) + 2f_{1}+f_{0}-d-4t_{2}\]
is non-negative for every $n\geq 2$.

Consider the case $n=3$. Recall that for $n=3$ our surface $Y_{n}$ has non-negative Kodaira dimension if $t_{d}(\mathcal{L}) = t_{d-1}(\mathcal{L})=0$ and $d\geq 6$. 
Evaluating $H_{\mathcal{L}}$ at $n=3$, we get 
\begin{equation}
\label{ball}
t_{2}+t_{3} \geq d +\sum_{r\geq 5}(r-4)t_{r}.
\end{equation}
This inequality is known as \textbf{Hirzebruch's inequality for complex line arrangements in the plane}.
Now if we find a line arrangement $\mathcal{L} \subset \mathbb{P}^{2}$ with $d\geq 6$ lines and $t_{d}(\mathcal{L}) = t_{d-1}(\mathcal{L}) = 0$ satisfying equality in \eqref{ball}, then the resulting surface $Y_{3}$ will be a ball-quotient.
\begin{example}
The Hesse arrangement $\mathcal{H}$ consisting of $d=12$ lines with $t_{2}(\mathcal{H})=12$ and $t_{4}(\mathcal{H})=9$ satisfies \eqref{ball}, hence the associated surface $Y_{3}$ is a ball-quotient surface. 
\end{example}
It is worth emphasizing that for $n=3$ the Hesse arrangement is the unique line arrangement satisfying equality in \eqref{ball}, which is an astonishing result, and we refer to \cite[Kapitel 3.1 G.]{BHH87} for details.
\begin{remark}
Although this was not initially expected, Hirzebruch's inequality \eqref{ball} has proven to be a powerful tool in combinatorics. First of all, Hirzebruch's inequality implies that every arrangement of $d \geq  6$ lines such that $t_{d}=t_{d-1}=0$ contains double or triple points as the intersections. This is an important and a non-trivial observation from the perspective of the classical \textit{orchard problem}. Many combinatorialists working on extreme point-line problems, such as the Weak Dirac Conjecture, have also used this inequality, see for instance \cite{Payne}.
\end{remark}
Now we can pass to the world of log-surfaces. Hirzebruch in \cite{Hirzebruch} defined the characteristic number $\gamma(\mathcal{L})$ of a given line arrangement $\mathcal{L} \subset \mathbb{P}^{2}$ via the Chern numbers of the associated surface $Y_{n}$ by the following limit
\begin{equation}\label{eq:limit_gamma}
\gamma(\mathcal{L}) = \textrm{lim}_{n \rightarrow \infty} \frac{c_{1}^{2}(Y_{n})}{c_{2}(Y_{n})} = \frac{9-5d + 3f_{1} - 4f_{0}}{3-2d + f_{1}-f_{0}}.
\end{equation}
If we compute the log Chern numbers using Proposition \ref{chernn} in the setting of line arrangements in the complex projective plane, then we get
\[\overline{c}_{1}^{2}(Y,\widetilde{\mathcal{L}}) = 9 - 5d + 3f_{1} - 4f_{0},\]
\[\overline{c}_{2}(Y,\widetilde{\mathcal{L}}) = 3 - 2d + f_{1} -f_{0},\]
and this allows us to conclude that 
\[E(Y, \widetilde{\mathcal{L}}) = \gamma(\mathcal{L}).\] 
In other words, the characteristic numbers are nothing else log Chern slopes of the associated log-surface $(Y,\widetilde{\mathcal{L}})$, and for this reason we will follow Hirzebruch's concept and use the term characteristic numbers most of the time.

Let us present the first interesting observation that comes from \cite[p. 135]{Hirzebruch}.
\begin{proposition}
Let $\mathcal{L} \subset \mathbb{P}^{2}$ be an arrangement of $d\geq 6$ lines such that $t_{d}=t_{d-1}=t_{d-2}=0$. Then  
\[\gamma(\mathcal{L})  = \frac{5}{2} - \frac{3f_{0}-f_{1}-3}{2(3-2k+f_{1}-f_{0})}. \]
\end{proposition}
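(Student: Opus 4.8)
The plan is to prove the identity as a purely formal manipulation of the formula
\[
\gamma(\mathcal{L}) = \frac{9-5d+3f_{1}-4f_{0}}{3-2d+f_{1}-f_{0}}
\]
recorded above, which by Proposition~\ref{chernn} is precisely $\bar{c}_{1}^{2}(Y,\widetilde{\mathcal{L}})/\bar{c}_{2}(Y,\widetilde{\mathcal{L}})$ with $\bar{c}_{1}^{2}(Y,\widetilde{\mathcal{L}})=9-5d+3f_{1}-4f_{0}$ and $\bar{c}_{2}(Y,\widetilde{\mathcal{L}})=3-2d+f_{1}-f_{0}$. The first thing to settle is therefore that the denominator $3-2d+f_{1}-f_{0}$ does not vanish, so that $\gamma(\mathcal{L})$ is a well-defined finite rational number; this is the only place where the hypotheses $d\geq 6$ and $t_{d}=t_{d-1}=t_{d-2}=0$ genuinely enter.

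Granting non-vanishing of the denominator, the core of the argument is the one-line computation
\[
\gamma(\mathcal{L})-\frac{5}{2}=\frac{2\bigl(9-5d+3f_{1}-4f_{0}\bigr)-5\bigl(3-2d+f_{1}-f_{0}\bigr)}{2\bigl(3-2d+f_{1}-f_{0}\bigr)}.
\]
Expanding the numerator, the terms in $d$ cancel and one is left with $3+f_{1}-3f_{0}=-\bigl(3f_{0}-f_{1}-3\bigr)$, which immediately gives
\[
\gamma(\mathcal{L})=\frac{5}{2}-\frac{3f_{0}-f_{1}-3}{2\bigl(3-2d+f_{1}-f_{0}\bigr)},
\]
as claimed.

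For the non-vanishing step I would combine the standard incidence identity $\sum_{r\geq 2}\binom{r}{2}t_{r}=\binom{d}{2}$ with the remark that $t_{d}=t_{d-1}=t_{d-2}=0$ forces every singular point of $\mathcal{L}$ to have multiplicity at most $d-3$; writing $f_{1}-f_{0}=\sum_{r\geq 2}(r-1)t_{r}$ and optimizing these constraints one obtains $\bar{c}_{2}(Y,\widetilde{\mathcal{L}})>0$. Concretely, for $d=6$ the incidence relation reads $t_{2}+3t_{3}=15$ with $t_{r}=0$ for $r\geq 4$, so $\bar{c}_{2}(Y,\widetilde{\mathcal{L}})=6-t_{3}\geq 1$; while for $d\geq 7$ one invokes that under these hypotheses Hirzebruch's inequality \eqref{ball} holds (equivalently, the covers $Y_{n}$ have non-negative Kodaira dimension), which keeps $3-2d+f_{1}-f_{0}$ positive.

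The main (and really the only) obstacle I anticipate is this last point: confirming $3-2d+f_{1}-f_{0}>0$ cannot be done from the incidence identity alone once $d\geq 7$, since the crude estimate it yields is only $\bar{c}_{2}(Y,\widetilde{\mathcal{L}})\geq(-d^{2}+8d-9)/(d-3)$, which is positive just for $d=6$; one must feed in the non-degeneracy of Hirzebruch's construction under the stated assumptions. The displayed identity itself is routine bookkeeping.
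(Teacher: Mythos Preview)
Your algebraic rearrangement is correct and is exactly what the paper has in mind: once one knows $\bar{c}_{2}(Y,\widetilde{\mathcal{L}})=3-2d+f_{1}-f_{0}\neq 0$, the displayed identity is a one-line computation. The paper says explicitly that ``the whole of Hirzebruch's proof boils down to showing that $3-2d+f_{1}-f_{0}>0$,'' so you have identified the crux correctly.

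Where your proposal diverges from the paper is in how positivity is actually established. The paper does not use Hirzebruch's machinery for this; it invokes the short inductive argument of Eterovi\'c--Figueroa--Urz\'ua reproduced immediately after the proposition: start from the nodal arrangement with $d=4$ (where $\bar{c}_{2}=1$), then for $d+1\geq 5$ delete a line through at least three singular points and observe that $\bar{c}_{2}$ drops by at least $1$. This works already under the weaker hypotheses $d\geq 4$, $t_{d}=t_{d-1}=0$.

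Your suggested route through Hirzebruch's inequality~\eqref{ball} does not work as written. Rewriting~\eqref{ball} gives $4f_{0}-f_{1}\geq d+t_{2}$, i.e.\ an \emph{upper} bound $f_{1}-f_{0}\leq 3f_{0}-d-t_{2}$; feeding this into $\bar{c}_{2}=3-2d+f_{1}-f_{0}$ yields only an upper bound, not the lower bound you need. So the inequality points the wrong way for this purpose. The parenthetical ``equivalently, the covers $Y_{n}$ have non-negative Kodaira dimension'' is closer in spirit to Hirzebruch's original argument, but it is not equivalent to~\eqref{ball}, and converting it into $\bar{c}_{2}>0$ still requires nontrivial work (one must know $\kappa(Y_{n})\geq 0$ for large $n$ and then argue for strict positivity of $c_{2}(Y_{n})$). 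Your final paragraph honestly flags this as the obstacle; the paper's resolution is simply to replace that entire line of reasoning with the three-line induction.
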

\noindent It should be emphasized that Hirzebruch's entire proof boils down to showing that $3-2d+f_{1} - f_{0} > 0$, which does not seem to be easy at first glance. On the other hand, we can try to look at the log Chern numbers of the pair $(Y,\widetilde{\mathcal{L}})$ purely combinatorially, as it was done in \cite{EFU}, where the authors prove the following result.
\begin{proposition}
If $\mathcal{L}$ is an arrangement of $d\geq 4$ lines such that $t_{d}=t_{d-1}=0$, then
the log Chern numbers $\overline{c}_{1}^{2}(Y,\widetilde{\mathcal{L}}), \overline{c}_{2}(Y,\widetilde{\mathcal{L}})$ are positive.
\end{proposition}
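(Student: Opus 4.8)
The plan is to work directly with the combinatorial formulas for the log Chern numbers of the pair $(Y,\widetilde{\mathcal{L}})$ obtained by specializing Proposition~\ref{chernn} to the case $X=\mathbb{P}^{2}$, $C_{i}=\ell_{i}$ lines (so $C_{i}^{2}=1$, $g(C_{i})=0$, $c_{1}^{2}(\mathbb{P}^{2})=9$, $c_{2}(\mathbb{P}^{2})=3$), which gives
\[
c_{1}^{2}(Y,\widetilde{\mathcal{L}}) = 9 - 5d + 3f_{1} - 4f_{0}, \qquad c_{2}(Y,\widetilde{\mathcal{L}}) = 3 - 2d + f_{1} - f_{0},
\]
where $f_{0}=\sum_{r\geq 2} t_{r}$ and $f_{1}=\sum_{r\geq 2} r t_{r}$. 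The goal is to show both quantities are strictly positive under the hypotheses $d\geq 4$ and $t_{d}=t_{d-1}=0$. First I would rewrite $f_{1}$ in terms of the total number of incidences: since every pair of lines meets in exactly one point (we are in $\mathbb{P}^{2}$, so no two lines are parallel, and the arrangement is transverse), counting pairs of lines through each singular point yields $\sum_{r\geq 2}\binom{r}{2} t_{r} = \binom{d}{2}$. This is the single structural identity that feeds everything.

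Next I would attack $c_{2}$, which is the easier of the two. Using $f_{1} - f_{0} = \sum_{r\geq 2}(r-1)t_{r}$ and the incidence identity $\sum_{r\geq 2}\binom{r}{2}t_{r} = \binom{d}{2}$, one gets $\sum_{r\geq 2}(r-1)t_{r} = \sum_{r\geq 2}\tfrac{2}{r}\binom{r}{2}t_{r}\cdot\tfrac{?}{?}$ — more cleanly, I would bound $f_{1}-f_{0}$ from below. Since $r-1\geq \tfrac{2}{d}\binom{r}{2}$ is false in general, the sharper route is: the number of singular points is $f_{0}$, and because $t_{d}=t_{d-1}=0$ each singular point has multiplicity at most $d-2$, so through any fixed line $\ell_{i}$ there pass at least $\lceil (d-1)/(d-3)\rceil \geq 2$ singular points when $d\geq 4$; summing over lines and dividing by the maximal multiplicity gives $f_{0}\geq \tfrac{2d}{d-2}> 2$, hence $f_{0}\geq 3$ actually is too weak — instead I would simply observe $f_{1} = \sum_{r\ge 2} r t_r \ge 2 f_0$ always, and combine with a lower bound on $f_1$ coming from $\binom{d}{2}=\sum \binom{r}{2}t_r \le \binom{d-2}{2}f_0 \cdot$ (something) to pin down $3 - 2d + f_1 - f_0 > 0$. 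The honest combinatorial heart is: $f_1 - f_0 = \sum (r-1)t_r$ and one shows $\sum(r-1)t_r \ge 2d - 2$ using $\sum \binom{r}{2} t_r = \binom d2$ together with $r \le d-2$, because $\frac{r-1}{\binom r2} = \frac{2}{r} \ge \frac{2}{d-2}$, giving $f_1 - f_0 \ge \frac{2}{d-2}\binom d2 = \frac{d(d-1)}{d-2} > 2d-2$ for $d \geq 4$ — wait, $\frac{d(d-1)}{d-2} = d - 1 + \frac{d-1}{d-2} > d-1+1 = d$, so this only gives $f_1 - f_0 > d$, not quite $2d-2$. The correct amplification uses the connectedness / non-pencil hypothesis more carefully, or a two-term estimate separating $t_2$ from the rest.

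For $c_{1}^{2}$ I would write $c_{1}^{2}(Y,\widetilde{\mathcal{L}}) = 3\,c_{2}(Y,\widetilde{\mathcal{L}}) - (f_{1} - 3f_{0} + 3)$, which reduces positivity of $c_{1}^{2}$ (given positivity of $c_{2}$) to showing $f_{1} - 3f_{0} + 3 < 3 c_2$, i.e. to re-deriving an estimate in the spirit of Hirzebruch's $3f_{0} - f_{1} - 3 > -\text{something}$; alternatively, and more in the style of \cite{EFU}, I would directly show $9 - 5d + 3f_{1} - 4f_{0} > 0$ by inserting $f_{1} = 2\binom{d}{2} + \sum_{r\geq 3}(r(r-1)-? )$ — concretely, from $\sum \binom r2 t_r = \binom d2$ we get $f_1 = 2\binom d2 + \sum_{r\ge 3}(r - (r-1))\cdots$; the cleanest substitution is $f_1 = d(d-1) - \sum_{r\ge2}(r-2)(r-1)t_r \cdot$? — I would expand $\sum_{r}r t_r$ against $\sum_r r(r-1)t_r = d(d-1)$ to get $f_1 = d(d-1) - \sum_{r\ge2} r(r-2)t_r$, and then $c_1^2 = 9 - 5d + 3d(d-1) - 3\sum r(r-2)t_r - 4f_0$; the quadratic $3d^2 - 8d + 9$ in $d$ dominates for $d\geq 4$ provided the correction terms $\sum r(r-2)t_r$ and $f_0$ are controlled, which again follows from $\sum\binom r2 t_r=\binom d2$ and $r\le d-2$. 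The main obstacle I anticipate is exactly this bookkeeping: converting the single quadratic incidence identity plus the ceiling hypothesis $t_d=t_{d-1}=0$ into a clean enough lower bound on $f_1 - f_0$ (for $c_2$) and an upper bound on $3f_0 - f_1$ (for $c_1^2$) that beats the linear-in-$d$ terms for all $d\geq 4$; the small cases $d=4,5$ will likely need to be checked by hand since the asymptotic inequalities have slack only for larger $d$. A clean way to organize the estimate is to set $t_2$ aside and bound $\sum_{r\ge 3}(\cdots)$ using $r\le d-2$, since the problematic terms all carry a factor that vanishes or is small when $r=2$.
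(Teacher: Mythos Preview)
Your plan has a genuine gap that no amount of bookkeeping will close. The only structural inputs you allow yourself are the incidence identity $\sum_{r\ge 2}\binom{r}{2}t_r=\binom{d}{2}$ and the ceiling $r\le d-2$ coming from $t_d=t_{d-1}=0$. But these constraints alone do \emph{not} force $\bar c_2>0$: the combinatorial datum $d=6$, $t_2=0$, $t_3=1$, $t_4=2$ satisfies both ($3\cdot 1+6\cdot 2=15=\binom{6}{2}$, all multiplicities $\le 4$), yet gives $\bar c_2=3-12+(2+6)=-1$ and $\bar c_1^{2}=0$. So no rearrangement of that single identity can possibly yield the claim; the proposition genuinely uses that $\mathcal L$ is a \emph{realized} line arrangement in $\mathbb P^2$, not just an admissible weak combinatorial type. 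Your remark that ``the asymptotic inequalities have slack only for larger $d$'' is also backwards: your own bound $f_1-f_0\ge \tfrac{d(d-1)}{d-2}\sim d$ falls ever further below the required $2d-3$ as $d$ grows, so the situation is worse asymptotically, not better.

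The paper (following \cite{EFU}) bypasses all of this with an induction on $d$. For $d=4$ the arrangement is nodal and $\bar c_1^{2}=\bar c_2=1$. For $d+1\ge 5$ lines one picks a line $L$ lying on $t\ge 3$ singular points---this is precisely the piece of geometry your scheme cannot see (if every line met only two singular points, the lines would be the edges of a graph in which any two edges share a vertex, hence a star or a triangle, both excluded by $t_{d+1}=t_d=0$ and $d+1\ge 5$). Deleting $L$ and comparing, one finds
\[
\bar c_1^{2}(Y,\widetilde{\mathcal L}) \ \ge\ \bar c_1^{2}(Y,\widetilde{\mathcal L\setminus L}) + (2t-5),\qquad
\bar c_2(Y,\widetilde{\mathcal L}) \ =\ \bar c_2(Y,\widetilde{\mathcal L\setminus L}) + (t-2),
\]
and both increments are $\ge 1$ once $t\ge 3$, which carries the induction. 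If you want to salvage a direct argument you must import some genuinely geometric inequality (Hirzebruch-type or the line-through-three-points fact above); the raw incidence count is provably insufficient.
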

\begin{proof}
We reproduce the argument presented in \cite[Proposition 3.3]{EFU}. We will proceed by induction.
If $d=4$, then $\mathcal{L}$ has only nodes as singularities, and we have $\overline{c}_{1}^{2}(Y,\widetilde{\mathcal{L}})=\overline{c}_{2}(Y,\widetilde{\mathcal{L}})=1$. Assume now that $\mathcal{L}$ has $d+1 \geq 5$ lines and let $L \in \mathcal{L}$ be a line passing through $t\geq 3$ points (observe that such a line always exists). Consider now $\mathcal{L}\setminus L$, which is non-trivial, and we can compute the log Chern numbers, namely
\[\overline{c}_{1}^{2}(Y,\widetilde{\mathcal{L}}) \geq \overline{c}_{1}^{2}(Y,\widetilde{\mathcal{L}} \setminus \widetilde{L})  -5 + 2t \geq \overline{c}_{1}^{2}(Y, \widetilde{\mathcal{L}}\setminus \widetilde{L} ) + 1 \geq 1,\]
and
\[\overline{c}_{2}(Y,\widetilde{\mathcal{L}}) = \overline{c}_{2}(Y, \widetilde{\mathcal{L}} \setminus \widetilde{L}) - 2 + t \geq \overline{c}_{2}(Y, \widetilde{\mathcal{L}}\setminus \widetilde{L}) + 1 \geq 1.\]
\end{proof}
Let us now present Hirzebruch's result devoted to the characteristic numbers of real line arrangements, i.e., these are line arrangements in the complex projective plane that can be defined over the real numbers via the canonical embedding $\mathbb{R} \subset \mathbb{C}$.
\begin{theorem}
\label{charR}
 Let $\mathcal{L}\subset \mathbb{P}^{2}$ be a real arrangement of $d\geq 4$ lines such that $t_{d}=t_{d-1}=0$, then
$\gamma(\mathcal{L}) \leq \frac{5}{2}$, and equality holds if and only if $\mathcal{L}$ is simplicial, i.e., every connected component of the complement $\mathbb{P}^{2}_{\mathbb{R}}\setminus \bigcup_{\ell \in \mathcal{L}}\ell$ is an open $2$-dimensional simplex.    
\end{theorem}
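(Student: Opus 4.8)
The plan is to first convert the inequality $\gamma(\mathcal{L})\leq 5/2$ into a purely combinatorial statement about the numbers $t_k$, and then to prove that statement — Melchior's inequality — by running Euler's formula on the cell decomposition of $\mathbb{P}^{2}_{\mathbb{R}}$ cut out by $\mathcal{L}$; the equality case will drop out of the same computation.

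For the reduction, recall the combinatorial expressions for the log Chern numbers established above, $c_{1}^{2}(Y,\widetilde{\mathcal{L}}) = 9 - 5d + 3f_{1} - 4f_{0}$ and $c_{2}(Y,\widetilde{\mathcal{L}}) = 3 - 2d + f_{1} - f_{0}$, together with the identity $\gamma(\mathcal{L}) = c_{1}^{2}(Y,\widetilde{\mathcal{L}})/c_{2}(Y,\widetilde{\mathcal{L}})$. The Proposition of \cite{EFU} quoted above applies since $d\geq 4$ and $t_{d} = t_{d-1} = 0$, so $c_{2}(Y,\widetilde{\mathcal{L}}) > 0$; hence $\gamma(\mathcal{L}) \leq 5/2$ is equivalent to $2\,c_{1}^{2}(Y,\widetilde{\mathcal{L}}) \leq 5\,c_{2}(Y,\widetilde{\mathcal{L}})$, which after an elementary manipulation reduces to
\[3f_{0} - f_{1} \;=\; \sum_{r\geq 2}(3-r)\,t_{r} \;=\; t_{2} - \sum_{r\geq 4}(r-3)\,t_{r} \;\geq\; 3,\]
and $\gamma(\mathcal{L}) = 5/2$ holds exactly when $3f_{0} - f_{1} = 3$. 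So the theorem is equivalent to Melchior's inequality $t_{2} \geq 3 + \sum_{r\geq 4}(r-3)\,t_{r}$ together with the assertion that equality there characterises simplicial arrangements.

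To prove the latter, observe that two distinct real lines always meet in a real point, so every singular point of $\mathcal{L}$ is real and $\mathcal{L}$ induces a genuine finite CW decomposition of $\mathbb{P}^{2}_{\mathbb{R}}$ whose vertices are the points of $\mathrm{Sing}(\mathcal{L})$. Counting gives $V = f_{0}$ vertices and $E = f_{1}$ edges (a $k$-fold point lies on $k$ of the lines, hence is counted $k$ times when one sums, over all lines, the number of marked points on each line). Since a projective line is non-contractible in $\mathbb{P}^{2}_{\mathbb{R}}$ and two distinct lines meet in a single point, no face is a monogon or a bigon, so each face has at least three sides; summing the number of sides over all faces counts every edge twice, so $2E \geq 3F$. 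Substituting $F = 1 - V + E$ from $\chi(\mathbb{P}^{2}_{\mathbb{R}}) = 1$ yields $3V - E \geq 3$, i.e.\ $3f_{0} - f_{1} \geq 3$, with equality precisely when every face is a triangle, that is, when every connected component of $\mathbb{P}^{2}_{\mathbb{R}}\setminus\bigcup_{\ell\in\mathcal{L}}\ell$ is an open $2$-simplex. Together with the reduction above this proves both halves of the statement. (The hypotheses $t_{d} = t_{d-1} = 0$ exclude exactly the pencils and near-pencils, the degenerate real arrangements where $c_{2}(Y,\widetilde{\mathcal{L}})$ can fail to be positive and where the face-count argument would need separate attention.)

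The genuinely non-formal ingredient — and the step I expect to be the crux — is the topological one: verifying that $\mathcal{L}$ really defines a CW structure on $\mathbb{P}^{2}_{\mathbb{R}}$ with no monogon or bigon faces, and then recognising that equality in the resulting inequality is equivalent to \emph{simpliciality} of the arrangement rather than to some a priori weaker condition on the face vector. By contrast, the passage from $\gamma(\mathcal{L})\leq 5/2$ to $3f_{0} - f_{1} \geq 3$ is a one-line manipulation of the Chern-number formulas already in hand.
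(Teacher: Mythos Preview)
Your proof is correct and the reduction step is identical to the paper's: both arguments multiply out $2\,c_{1}^{2}(Y,\widetilde{\mathcal{L}}) \leq 5\,c_{2}(Y,\widetilde{\mathcal{L}})$ (using the positivity of $c_{2}$ from the cited result of \cite{EFU}) and recognise the resulting inequality $t_{2} \geq 3 + \sum_{r\geq 4}(r-3)t_{r}$ as Melchior's, with equality characterising simplicial arrangements. The one difference is that the paper simply \emph{quotes} Melchior's theorem as a black box and invokes it, whereas you supply a proof of it via the Euler characteristic of the CW decomposition of $\mathbb{P}^{2}_{\mathbb{R}}$. This makes your argument self-contained at the cost of having to justify the ``no monogons or bigons'' step; your justification there is a little telegraphic (non-contractibility of a line is not quite the reason a monogon cannot occur --- the real point is that under $t_{d}=0$ every line carries at least two singular points, so no edge is a loop, and two distinct lines meeting only once kills bigons with edges on different lines), but the underlying facts are standard and the conclusion is sound.
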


We will need the following fundamental result due to Melchior \cite{Melchior}.
\begin{theorem}[Melchior]
Let $\mathcal{L} \subset \mathbb{P}^{2}_{\mathbb{R}}$ be a real arrangement of $d\geq 3$ lines such that $t_{d}=0$. Then
\[t_{2} \geq 3 + \sum_{r\geq 4}(r-3)t_{r},\]
and equality holds if and only if $\mathcal{L}$ is simplicial.
\end{theorem}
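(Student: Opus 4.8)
The plan is to use the polygonal decomposition that $\mathcal{L}$ induces on the real projective plane, together with its Euler characteristic $\chi(\mathbb{P}^{2}_{\mathbb{R}}) = 1$. Write $V$, $E$, $F$ for the numbers of vertices, edges and $2$-cells (faces) of this decomposition. The vertices are exactly the singular points of $\mathcal{L}$, so $V = \sum_{r\ge 2} t_{r}$. Each line is a copy of $\mathbb{P}^{1}_{\mathbb{R}}$ subdivided by the vertices lying on it, so a line carrying $k$ vertices contributes $k$ edges; since a vertex of multiplicity $r$ lies on $r$ lines it is counted $r$ times, giving $E = \sum_{r\ge 2} r\,t_{r}$. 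The first point to settle is that every line carries at least two vertices: as any two lines of $\mathbb{P}^{2}$ meet, a line with a single vertex $p$ would force every other line, hence all $d$ lines, to pass through $p$, contradicting $t_{d}=0$; this is the only place the hypothesis $t_{d}=0$ enters. In particular no edge is a whole projective line, and one then checks that the decomposition is a genuine regular CW decomposition: every $2$-cell is an open disk and every edge lies on the boundary of exactly two faces.

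Next I would bound $F$ from above. Writing $p_{k}$ for the number of faces bounded by $k$ edges, we have $F = \sum_{k} p_{k}$ and $2E = \sum_{k} k\,p_{k}$, the latter because each edge borders two faces. The key combinatorial observation is that there are neither monogons nor bigons: an edge has two distinct endpoints, and a face bounded by two edges would force two distinct lines of $\mathcal{L}$ to share both of its boundary vertices, impossible since two lines of $\mathbb{P}^{2}$ meet in a single point. Hence every face satisfies $k\ge 3$, which yields $2E = \sum_{k} k\,p_{k} \ge 3\sum_{k} p_{k} = 3F$.

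Now I combine this with Euler's formula. From $V-E+F = 1$ we get $F = 1 - V + E$, and substituting into $3F \le 2E$ gives $E \le 3V-3$. Plugging in $V = \sum_{r\ge 2} t_{r}$ and $E = \sum_{r\ge 2} r\,t_{r}$, and noting that the coefficient of $t_{3}$ cancels, this rearranges precisely to
\[
t_{2} \;\ge\; 3 + \sum_{r\ge 4}(r-3)\,t_{r}.
\]
Equality holds if and only if equality holds in $3F \le 2E$, i.e.\ if and only if $p_{k}=0$ for every $k\ne 3$, i.e.\ if and only if every face of $\mathcal{L}$ is bounded by exactly three edges. Since a face cut out by three segments of lines is the interior of a triangle, this says exactly that every connected component of $\mathbb{P}^{2}_{\mathbb{R}}\setminus\bigcup_{\ell\in\mathcal{L}}\ell$ is an open $2$-simplex, that is, $\mathcal{L}$ is simplicial.

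I expect the main obstacle to be the topological bookkeeping of the first paragraph: verifying that $\mathcal{L}$ really does cut $\mathbb{P}^{2}_{\mathbb{R}}$ into a regular CW complex whose $2$-cells are disks, each meeting exactly two others across a given edge, so that the counts $V$, $E$, $F$ and the identity $2E = \sum_{k} k\,p_{k}$ are legitimate. The cleanest way to do this is to pull everything back along the double cover $S^{2}\to\mathbb{P}^{2}_{\mathbb{R}}$, where the lines become great circles and the faces become spherically convex regions; all the remaining steps are elementary counting.
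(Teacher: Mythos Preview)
Your argument is the classical Euler-characteristic proof of Melchior's inequality, and it is correct as outlined; the only delicate point is indeed the one you flag, namely that the arrangement gives an honest CW decomposition with each edge on the boundary of two faces, and your suggestion to pass to the spherical double cover is the standard way to make this airtight.

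However, there is nothing to compare against: the paper does \emph{not} prove Melchior's theorem. It merely quotes the statement from \cite{Melchior} as a black box and then uses it as an input to the proof of Theorem~\ref{charR}. So you have supplied a proof where the paper supplies only a citation.
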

Now we can proceed with our proof of Theorem \ref{charR}.
\begin{proof}
Assume that $\gamma(\mathcal{L})>\frac{5}{2}$, then
\[\gamma(\mathcal{L})=\dfrac{9-5d+\sum_{r\geq 2}(3r-4)t_{r}}{3-2d+\sum_{r\geq 2}(r-1)t_{r}}>\dfrac{5}{2},
\]
or equivalently
\[2(9-5d+\sum_{r\geq 2}(3r-4)t_{r})>5(3-2d+\sum_{r\geq 2}(r-1)t_{r}).\]
After some simple manipulations, we obtain
\[3+ \sum_{r\geq 4}(r-3)t_{r}>t_{2},\]
which contradicts Melchior's inequality mentioned above. Next, note that the condition $\gamma(\mathcal{L}) = \frac{5}{2}$ is equivalent to $t_{2} = 3 + \sum_{r\geq 3}(r-3)t_{r}$, which means that $\mathcal{L}$ must be simplicial by Melchior's theorem.
\end{proof}
As we can see, simplical arrangements play an important role in the geography problem, but they are also important research objects in algebraic combinatorics \cite{Grunbaum}. Somehow surprisingly, simplical arrangements of lines are not well understood because they have not yet been fully classified, and this is one of the most fundamental structure problems in this theory, see \cite{Cuntz}. We know that there are three infinite series of simplicial line arrangements, but there is one particular family that we can use to conclude the following.
\begin{proposition}
There exists infinitely many real line arrangements $\mathcal{L}\subset \mathbb{P}^{2}$ such that $\gamma(\mathcal{L}) = \frac{5}{2}$.  
\end{proposition}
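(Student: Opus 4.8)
The plan is to exhibit one of the classical infinite families of simplicial line arrangements and invoke Theorem~\ref{charR}, which guarantees $\gamma(\mathcal{L}) = \frac{5}{2}$ for every simplicial arrangement satisfying the mild hypotheses $d \geq 4$ and $t_d = t_{d-1} = 0$. The most convenient choice is the \emph{near-pencil} family, or better, the family $\mathcal{A}(2k, k)$ obtained from a regular $k$-gon: take the $k$ lines supporting the edges of a regular $k$-gon together with the $k$ lines of symmetry (the mirrors). For $k \geq 3$ this is a well-known simplicial arrangement of $d = 2k$ lines, so it produces an infinite family as $k \to \infty$.

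First I would recall (or cite \cite{Grunbaum, Cuntz}) that the arrangement $\mathcal{A}(2k,k)$ is simplicial: the $k$ mirror lines all pass through the center, giving one point of multiplicity $k$, while the remaining incidences are double points and the points where an edge-line meets the mirrors, all of bounded multiplicity independent of how large $k$ is. In particular, for $k \geq 3$ we have $d = 2k \geq 6 > 4$, and no point lies on all $d$ lines nor on $d-1$ of them, so $t_d = t_{d-1} = 0$; thus the hypotheses of Theorem~\ref{charR} are met. Since the arrangement is simplicial, the theorem yields $\gamma(\mathcal{L}) = \frac{5}{2}$. Letting $k$ range over all integers $\geq 3$ produces infinitely many pairwise non-isomorphic such arrangements (they have different numbers of lines), which proves the proposition.

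Alternatively, and perhaps even more cleanly, one may simply cite the classification-adjacent fact, mentioned in the paragraph preceding the statement, that there are three known infinite series of simplicial line arrangements; any one of them provides the desired infinite family, and each member automatically satisfies $t_d = t_{d-1} = 0$ once $d$ is large enough. The only genuine content beyond Theorem~\ref{charR} is the verification that at least one explicit infinite series of simplicial arrangements exists and that its members satisfy the combinatorial hypotheses of the theorem.

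The main obstacle here is not mathematical depth but bookkeeping: one must be careful to state a specific simplicial family for which simpliciality is genuinely classical (so that no appeal to the unresolved classification problem is needed), and to check the harmless conditions $d \geq 4$, $t_d = t_{d-1} = 0$ for every member of the family rather than just asymptotically. For the $k$-gon-with-mirrors family both points are immediate, so the proof reduces to a short citation-backed verification followed by a one-line application of Theorem~\ref{charR}.
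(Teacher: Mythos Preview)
Your proposal is correct and follows essentially the same route as the paper: both use the regular $k$-gon arrangement together with its $k$ symmetry lines (the paper's $\mathcal{P}_k$ is exactly your $\mathcal{A}(2k,k)$), check the hypotheses of Theorem~\ref{charR}, and conclude $\gamma = 5/2$. The only cosmetic difference is that the paper computes the intersection data $t_2 = k$, $t_3 = \binom{k}{2}$, $t_k = 1$ and verifies Melchior's equality directly to establish simpliciality, whereas you propose to cite this fact from \cite{Grunbaum, Cuntz}.
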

\begin{proof}
We need to construct an infinite series of simplicial line arrangements with $t_{d}=t_{d-1}=0$. Let us describe a polyhedral family $\mathcal{P}_{k}$ with $k\geq 3$, namely we consider an arrangement of $2k$ lines where $k$ of the lines are the sides of a regular $k$-gon in the plane, and other $k$ lines are the lines of bilateral symmetry of the $k$-gon, i.e., angle bisectors and perpendicular bisectors of the sides. Observe that $t_{k}=1$ (this is the centre of our $k$-gon), $t_{2}=k$ (these are the midpoints of the sides), and finally $t_{3} = \binom{k}{2}$ (these are the intersections of pairs of sides with the line of symmetry between these sides). It is easy to check that for every $k\geq 3$ arrangement $\mathcal{P}_{k}$ is simplicial, since we obtain equality in Melchior's result, and this implies that $\gamma(\mathcal{P}_{k})=\frac{5}{2}$, which completes the proof.
\end{proof}
\begin{figure}[h]
    \centering
    \begin{tikzpicture}[line cap=round,line join=round,x=2.0cm,y=2.0cm,scale=0.6]
\clip(-2.1097866603487194,-1.653492674462135) rectangle (2.2131214013307456,1.8584890512464292);
\draw [line width=.5pt] (0.,-1.653492674462135) -- (0.,1.8584890512464292);
\draw [line width=.5pt,domain=-2.1097866603487194:2.2131214013307456] plot(\x,{(-0.5773502691896257-0.*\x)/1.});
\draw [line width=.5pt,domain=-2.1097866603487194:2.2131214013307456] plot(\x,{(-0.-1.7320508075688772*\x)/3.});
\draw [line width=.5pt,domain=-2.1097866603487194:2.2131214013307456] plot(\x,{(--3.4641016151377544-5.196152422706632*\x)/3.});
\draw [line width=.5pt,domain=-2.1097866603487194:2.2131214013307456] plot(\x,{(-3.4641016151377544-5.196152422706632*\x)/-3.});
\draw [line width=.5pt,domain=-2.1097866603487194:2.2131214013307456] plot(\x,{(-0.--1.7320508075688772*\x)/3.});
\end{tikzpicture}
\caption{The polyhedral arrangement $\mathcal{P}_3$: 6 lines given by the sides of a regular triangle and its three lines of symmetry (medians). It is simplicial; the only singularities are 3 double points (the side midpoints) and 4 triple points (the three vertices and the center), so $(d,t_2,t_3)=(6,3,4)$ and $\gamma(P_3)=5/2$ (the equality case for real arrangements in Theorem \ref{charR}).}
\end{figure}
What about complex line arrangements? The most important result in this direction was proved by Sommese \cite{Sommese}.
\begin{theorem}
\label{somm}
Let $\mathcal{L} \subset \mathbb{P}^{2}$ be an arrangement of $d\geq 6$ lines such that $t_{d}=t_{d-1}=0$, then one has 
\[\gamma(\mathcal{L}) \leq \frac{8}{3},\]
and equality holds if and only if $\mathcal{L}$ is the dual Hesse arrangement consisting of $d=9$ lines and $t_{3}=12$, and no other intersections.
\end{theorem}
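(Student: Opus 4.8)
The plan is to follow the same strategy used in the proof of Theorem \ref{charR}, only now replacing Melchior's inequality by the corresponding combinatorial constraint available for complex line arrangements, namely Hirzebruch's inequality \eqref{ball}. First I would rewrite the hypothesis $\gamma(\mathcal{L}) > \tfrac{8}{3}$ in the explicit form
\[
\frac{9 - 5d + \sum_{r\geq 2}(3r-4)t_{r}}{3 - 2d + \sum_{r\geq 2}(r-1)t_{r}} > \frac{8}{3}.
\]
Here I should first observe that the denominator $c_{2}(Y,\widetilde{\mathcal{L}}) = 3 - 2d + f_{1} - f_{0}$ is positive: this follows from the combinatorial positivity statement quoted just above (the Proposition reproducing \cite[Proposition 3.3]{EFU}) under the assumption $t_{d} = t_{d-1} = 0$, so cross-multiplication is legitimate. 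Clearing denominators gives
\[
3\Bigl(9 - 5d + \sum_{r\geq 2}(3r-4)t_{r}\Bigr) > 8\Bigl(3 - 2d + \sum_{r\geq 2}(r-1)t_{r}\Bigr),
\]
and after collecting terms (using $\sum_{r\geq 2} t_{r} = f_{0}$, $\sum_{r\geq 2} r t_{r} = f_{1}$) I expect this to simplify to an inequality of the shape
\[
t_{2} + t_{3} > d + \sum_{r\geq 5}(r-4)t_{r},
\]
i.e. precisely the reverse of Hirzebruch's inequality \eqref{ball}. Since $\mathcal{L}$ has $d \geq 6$ lines with $t_{d} = t_{d-1} = 0$, inequality \eqref{ball} holds, and this contradiction proves $\gamma(\mathcal{L}) \leq \tfrac{8}{3}$.

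For the equality case, the above computation shows $\gamma(\mathcal{L}) = \tfrac{8}{3}$ forces equality in \eqref{ball}, i.e. $t_{2} + t_{3} = d + \sum_{r\geq 5}(r-4)t_{r}$, which is exactly the condition characterizing when the associated Kummer cover $Y_{3}$ is a ball quotient. At this point I would invoke the classification result of Bauer–Hirzebruch(–Höfer) from \cite[Kapitel 3.1 G.]{BHH87}: among line arrangements of $d \geq 6$ lines with $t_{d} = t_{d-1} = 0$ attaining equality in \eqref{ball}, one knows the complete list, and the arrangement whose characteristic number equals $\tfrac{8}{3}$ (as opposed to, say, the Hesse arrangement, which gives $\tfrac{5}{2}$) is the dual Hesse arrangement with $d = 9$, $t_{3} = 12$ and $t_{k} = 0$ otherwise. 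One should double-check directly that plugging $d = 9$, $f_{0} = 12$, $f_{1} = 36$ into the formula for $\gamma$ indeed yields $\tfrac{9 - 45 + 108 - 48}{3 - 18 + 36 - 12} = \tfrac{24}{9} = \tfrac{8}{3}$, and conversely that equality in \eqref{ball} together with $\gamma = \tfrac83$ pins down $f_0, f_1, d$ and the full vector $(t_k)$ up to the combinatorics of the dual Hesse configuration.

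The main obstacle I anticipate is the equality case rather than the inequality. The inequality itself is a purely formal manipulation once positivity of $c_2$ is in hand. But extracting "dual Hesse and nothing else" from equality in \eqref{ball} combined with $\gamma = \tfrac83$ requires more: equality in \eqref{ball} alone is satisfied by several arrangements (the Hesse arrangement among them), so one must use the extra numerical information coming from $\gamma = \tfrac83$ — which fixes $3f_0 - f_1 - 3$ relative to the denominator — to eliminate all competitors. Making this rigorous either means reproducing the relevant part of the analysis in \cite{BHH87}, or citing it as a black box; in a survey the latter is acceptable, but one still wants to exhibit the consistency check for the dual Hesse arrangement explicitly so the reader sees that the claimed extremal example genuinely realizes the bound. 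A subtlety worth flagging is that the hypothesis here is only $t_d = t_{d-1} = 0$, whereas some intermediate formulas in the text (e.g. the Proposition giving $\gamma = \tfrac52 - \cdots$) assumed $t_{d} = t_{d-1} = t_{d-2} = 0$; one should confirm that the weaker hypothesis suffices for the positivity of $c_2$ and for the applicability of \eqref{ball}, which it does since \eqref{ball} is stated precisely under $t_d = t_{d-1} = 0$, $d \geq 6$.
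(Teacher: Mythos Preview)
Your algebraic reduction in the first step is incorrect, and this breaks the whole argument. If you actually carry out the cross-multiplication of $\gamma(\mathcal{L}) \geq \tfrac{8}{3}$ and collect terms, you obtain
\[
d + 3 + \sum_{r\geq 5}(r-4)t_{r} \;\geq\; 2t_{2} + t_{3},
\]
with coefficient $2$ in front of $t_{2}$ and an extra constant $+3$; it is \emph{not} the reverse of \eqref{ball}. Combining this with the ordinary Hirzebruch inequality $t_{2}+t_{3}\geq d+\sum_{r\geq 5}(r-4)t_{r}$ only yields $t_{2}\leq 3$, which is far from a contradiction. The paper must therefore invoke the \emph{Stronger} Hirzebruch inequality $t_{2}+\tfrac{3}{4}t_{3}\geq d+\sum_{r\geq 5}(r-4)t_{r}$; feeding this into the displayed inequality gives
\[
d + 2t_{2} + \sum_{r\geq 5}(r-4)t_{r} \leq 9,
\]
so in particular $d\leq 9$, and the proof is then completed by a case-by-case combinatorial analysis for $d\in\{6,7,8,9\}$ using the naive count $\binom{d}{2}=\sum_{r}\binom{r}{2}t_{r}$.

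Your treatment of the equality case is also flawed, for a reason that is easy to check directly: the dual Hesse arrangement ($d=9$, $t_{3}=12$) does \emph{not} attain equality in \eqref{ball}, since $t_{2}+t_{3}=12$ while $d+\sum_{r\geq 5}(r-4)t_{r}=9$. The unique arrangement realising equality in \eqref{ball} is the Hesse arrangement ($d=12$, $t_{2}=12$, $t_{4}=9$), whose characteristic number is $\tfrac{5}{2}$, not $\tfrac{8}{3}$. So the route ``$\gamma=\tfrac{8}{3}\Rightarrow$ equality in \eqref{ball} $\Rightarrow$ cite \cite{BHH87}'' cannot possibly land on the dual Hesse. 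In the paper the uniqueness of the dual Hesse emerges instead from the case $d=9$ of the analysis above, where the constraints force $t_{2}=t_{4}=t_{5}=\cdots=0$ and then $t_{3}=12$ via the naive count.
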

In order to prove this result, we will need a modified version of Hirzebruch's inequality \cite[p. 140]{Hirzebruch}.
\begin{theorem}[Stronger Hirzebruch's inequality]
Let $\mathcal{L} \subset \mathbb{P}^{2}$ be an arrangement of $d\geq 6$ lines such that $t_{d}=t_{d-1}=0$, then one has
\begin{equation}
t_{2} + \frac{3}{4}t_{3} \geq d + \sum_{r\geq 5}(r-4)t_{r}.
\end{equation}
\end{theorem}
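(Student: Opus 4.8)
The plan is to deduce the inequality from the Bogomolov--Miyaoka--Yau inequality applied to an abelian cover, refining the derivation of the weaker inequality \eqref{ball}. Recall that \eqref{ball} comes from evaluating the Hirzebruch polynomial $H_{\mathcal{L}}$ at $n=3$, i.e. from BMY for the Kummer cover $Y_{3}$ of $\mathbb{P}^{2}$ branched along $\mathcal{L}$ itself; there every triple point of $\mathcal{L}$ forces a quotient singularity on $X_{3}$ whose resolution ``wastes'' Euler characteristic. The idea is to first resolve the triple points of $\mathcal{L}$ \emph{before} covering, so that each of them is replaced by three ordinary nodes --- which the covering construction handles with maximal efficiency --- and to keep the exceptional curves as a logarithmic boundary rather than in the branch locus. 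As a sanity check, one verifies that for the dual Hesse arrangement ($d=9$, $t_{3}=12$) this construction produces a semistable log surface with $\bar c_{1}^{2}=3\bar c_{2}$ and $\bar\kappa=2$ --- exactly the extremal case of Sommese's Theorem~\ref{somm} --- so the target inequality is sharp and this construction ``sees'' why.

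Concretely I would proceed as follows. \textbf{Step 1.} Let $\rho\colon Y\to\mathbb{P}^{2}$ be the blow-up at all triple points $p$ of $\mathcal{L}$, with exceptional curves $E_{p}$ ($E_{p}^{2}=-1$) and strict transforms $\ell_{i}'$ of the lines; note $g(\ell_{i}')=g(E_{p})=0$, $\ell_{i}'^{2}=1-\#\{p\in\ell_{i}\colon m_{p}=3\}$, and that $\sum_{i}\ell_{i}'$ has as singularities only the old nodes of $\mathcal{L}$ together with its $r$-fold points for $r\ge 4$, the triple points having been replaced by the $3t_{3}$ points $\ell_{i}'\cap E_{p}$, at which a (branch) strict transform meets a \emph{non-branch} curve transversally. \textbf{Step 2.} Form, in the manner of the Kummer covers $Y_{n}$ but based over $Y$ instead of $\mathbb{P}^{2}$, the abelian cover $\pi\colon Z_{3}\to Y$ of exponent $3$ branched along $\sum_{i}\ell_{i}'$ \emph{only}, and resolve its cyclic quotient singularities, which lie over the $r\ge 4$-fold points of $\mathcal{L}$ exactly as in Hirzebruch's original treatment, to obtain a smooth surface $\widehat Z_{3}$; let $\mathcal{E}_{3}$ be the reduced preimage of $\sum_{p}E_{p}$. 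A Riemann--Hurwitz count shows each component of $\mathcal{E}_{3}$ is a smooth elliptic curve and that these are pairwise disjoint, so $(\widehat Z_{3},\mathcal{E}_{3})$ is a semistable log surface. \textbf{Step 3.} Using the blow-up formulas on $Y$ together with the behaviour of $c_{1}^{2}$ and $c_{2}$ under abelian covers along an SNC branch divisor (as recorded around Proposition~\ref{chernn}), write $\bar c_{1}^{2}(\widehat Z_{3},\mathcal{E}_{3})$ and $\bar c_{2}(\widehat Z_{3},\mathcal{E}_{3})$, after dividing by $\deg\pi$, as explicit linear expressions in $d$ and the $t_{r}$; here one uses $\sum_{i}\#\{p\in\ell_{i}\colon m_{p}=3\}=3t_{3}$ and that each $E_{p}$ meets $\sum_{i}\ell_{i}'$ in exactly three points. \textbf{Step 4.} Apply the Miyaoka--Sakai inequality $\bar c_{1}^{2}\le 3\bar c_{2}$ to $(\widehat Z_{3},\mathcal{E}_{3})$ and simplify; the ``$t_{3}$-savings'' produced by having resolved the triple points upgrades the coefficient $1$ of $t_{3}$ in \eqref{ball} to $3/4$, and one reads off precisely $t_{2}+\tfrac34 t_{3}\ge d+\sum_{r\ge 5}(r-4)t_{r}$.

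The step I expect to be the genuine obstacle is the hypothesis required to run Step 4, namely that $(\widehat Z_{3},\mathcal{E}_{3})$ has logarithmic Kodaira dimension $2$. After blowing up the triple points the strict transforms $\ell_{i}'$ acquire (possibly very) negative self-intersection, and one must rule out that $\widehat Z_{3}$ is log-rational or log-ruled; this is exactly where the hypotheses $d\ge 6$ and $t_{d}=t_{d-1}=0$ enter, and verifying it requires the case analysis of ``degenerate'' configurations (pencils, near-pencils, and small arrangements) that already underlies Hirzebruch's proof that the covers $Y_{n}$ have non-negative Kodaira dimension. A secondary, purely technical, difficulty is the bookkeeping of the cyclic quotient singularities of $Z_{3}$ over the $r\ge 4$-fold points of $\mathcal{L}$ and the check that their resolution contributes to $\bar c_{1}^{2}$ and $\bar c_{2}$ exactly the terms indexed by $r\ge 5$; this is routine but error-prone. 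Everything else in Steps~1--3 is mechanical.
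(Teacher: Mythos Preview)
The paper does not prove this theorem; it quotes it from \cite[p.~140]{Hirzebruch} and uses it as a black box in the proof of Theorem~\ref{somm}. Your overall strategy --- apply the logarithmic Miyaoka--Sakai inequality to Hirzebruch's surface $Y_{3}$ with the curves lying over the blown-up triple points taken as boundary --- is precisely Hirzebruch's original argument, and it does yield the inequality.

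There is, however, a genuine error in Step~2. You claim one can form an abelian $(\mathbb{Z}/3)^{d-1}$-cover of $Y$ branched along $\sum_{i}\ell_{i}'$ \emph{only}. Such a cover does not exist: since each $E_{p}\subset Y$ lies outside the proposed branch locus, a small loop around $E_{p}$ must map to the identity in the Galois group, but that loop is homotopic in $\mathbb{P}^{2}\setminus\mathcal{L}$ to the product of the meridians of the three lines through $p$, whose images $\sigma_{i},\sigma_{j},\sigma_{k}$ are independent generators of $(\mathbb{Z}/3)^{d-1}$ and hence do \emph{not} multiply to $1$. Concretely, in local coordinates $(u,v)$ with $E_{p}=\{u=0\}$ one is extracting cube roots of $uv,\,u(v-1),\,u(v-a)$, and the normalization is ramified of index $3$ along $u=0$. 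So the correct object is just Hirzebruch's $Y_{3}$, and $E_{p}$ \emph{is} in its branch locus. Fortunately nothing else changes: the reduced preimage of each $E_{p}$ in $Y_{3}$ still consists of $3^{d-4}$ disjoint smooth elliptic curves, each of self-intersection $-3$ (Riemann--Hurwitz for a degree-$9$ cover of $\mathbb{P}^{1}$ with three fibres of type $3\cdot(3)$; and $\pi^{*}E_{p}=3\sum F_{i}$ gives $\sum F_{i}^{2}=-3^{d-3}$). With $\mathcal{E}_{3}$ their union one gets $\bar c_{2}(Y_{3},\mathcal{E}_{3})=e(Y_{3})$ and $\bar c_{1}^{2}(Y_{3},\mathcal{E}_{3})=K_{Y_{3}}^{2}-\mathcal{E}_{3}^{2}=K_{Y_{3}}^{2}+3^{d-3}t_{3}$, so Miyaoka--Sakai becomes $3^{d-3}H_{\mathcal{L}}(3)=3e(Y_{3})-K_{Y_{3}}^{2}\ge 3^{d-3}t_{3}$, i.e.\ $H_{\mathcal{L}}(3)\ge t_{3}$, which unwinds (via $H_{\mathcal{L}}(3)=4(t_{2}+t_{3}-d-\sum_{r\ge5}(r-4)t_{r})$) to the desired inequality. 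Your identification of the $\bar\kappa=2$ verification as the real work is accurate; that is handled in \cite{Hirzebruch} and \cite{BHH87} by the same case analysis you allude to.
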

Now we are ready to deliver our proof of Theorem \ref{somm}.
\begin{proof}
Assume that  $\gamma(\mathcal{L})\geq\frac{8}{3}$. 
This means that we have
\[3(9-5d+\sum_{r\geq 2}(3r-4)t_{r}) \geq 8(3-2d+\sum_{m\geq 2}(r-1)t_{r}),\]
or equivalently
\[\sum_{r\geq 5}(r-4)t_{r} + d + 3\geq 2t_{2}+t_{3}\]
Now we are going to use Stronger Hirzebruch's inequality, namely
\[\sum_{r\geq 5}(r-4)t_{r} + d + 3 \geq 2t_{2} - \frac{4}{3} t_{2} + \dfrac{4}{3}\left(\sum_{r\geq 5}(r-4) t_{r} +d  \right)\]
which finally gives us
\begin{equation}\label{2.1}
\sum_{r\geq 5}(r-4)t_{r} + d + 2t_{2} \leq 9.    
\end{equation}
In particular, we see see that $d\leq 9$. Now our proof proceed by combinatorial considerations depending on the number of lines. 

For $d=6$ we can list all possible line arrangements in complex plane by hand, or we can use a shortcut and check the database \cite{Mat} to conclude that there is no line arrangement $\mathcal{L}$ with $d=6$ and $t_{d}=t_{d-1}=0$, satisfying \eqref{2.1}, and such that $\gamma(\mathcal{L}) \geq \frac{8}{3}$.

For $d=7$, let us observe that $\eqref{2.1}$ gives us that $2t_{2} + t_{5} \leq 2$, since $t_{7}$ and $t_{6}$ must both equal to zero, and therefore either $t_{2}=0$ and $t_{5}\leq 2$, or $t_{2}=1$ and $t_{5}=0$, so we have two cases to consider. Recall that we have the following naive combinatorial count:
\begin{equation}
\binom{d}{2} = \sum_{r\geq 2}\binom{r}{2}t_{r}.
\end{equation}
If $t_{2} =1$ and $t_{5}=0$, then by Stronger Hirzebruch's inequality we obtain
\[t_{2} + \frac{3}{4} t_{3} = 1 + \frac{3}{4} t_{3} \geq d = 7,\]
so we obtain $t_{3} \geq 8$. If we now plug this data into the naive combinatorial count, we get
\[\binom{7}{2} = 21 = t_{2} + 3t_{3} + 6t_{4} \geq 3\cdot 8,\]
a contradiction.

\noindent In the second case, we have $t_{2} = 0$ and $t_{5}\leq 2$. By Stronger Hirzebruch's inequality, we get
\[t_{3} \geq \frac{4}{3}d + \frac{4}{3}t_{5} \geq \frac{4}{3}d = \frac{28}{3}.\]
As $t_{3}$ is a non-negative integer, we can conclude that $t_{3}\geq 10$.
Plugging this data into the naive combinatorial count, we get
\[\binom{7}{2} = 21 = 3t_{3} + 6t_{4} + 10 t_{5}\geq 3\cdot 10,\]
a contradiction.

For $d=8$, let us observe that $\eqref{2.1}$ gives us that $2t_{2} + t_{5} + 2t_{6} \leq 1$, since $t_{8}$ and $t_{7}$ must both equal to zero, and therefore $t_{2}=t_{6}=0$ and $t_{5}\leq 1$.  By Stronger Hirzebruch's inequality, we get
\[t_{3} \geq \frac{4}{3}d + \frac{4}{3}t_{5} \geq \frac{4}{3}d=\frac{32}{3},\]
and since $t_{3}$ is a non-negative integer, we can conclude that $t_{3}\geq 11$.
Plugging this data into the naive combinatorial count, we get
\[\binom{8}{2} = 28 = 3t_{3} + 6t_{4} + 10 t_{5} \geq 3\cdot 11,\]
a contradiction.

Finally, let us consider the case $d=9$. By $\eqref{2.1}$ and using the assumption that $t_{9}=t_{8}=0$, we get $2t_{2} + t_{5} + 2t_{6}+3t_{7}\leq 0$, and hence we must have $t_{2}=t_{5}=t_{6}=t_{7}=t_{8}=t_{9}=0$. Stronger Hirzebruch's inequality implies in this situation that $t_{3} \geq 12.$
Plugging this data into the naive combinatorial count, we get
\[\binom{9}{2} = 36 = 3t_{3} + 6t_{4} \geq 3\cdot 12,\]
and this means that the only possibility is $t_{3}=12$. Summing up, the only admissible case is $d=9$ and $t_{3}=12$, so $\mathcal{L}$ must be the dual Hesse arrangement, since it is unique up to the projective equivalence \cite{LW}.
\end{proof}
\begin{figure}[h]
\includegraphics[scale=1.4]{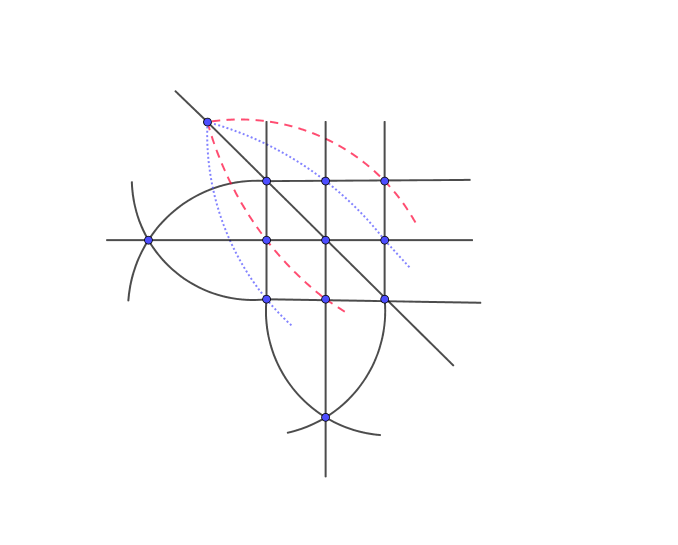}
\caption{The dual Hesse arrangement $\mathcal{L}$ of nine lines in $\mathbb{P}^2$ (incidence type $(9_4,12_3)$): all $12$ intersection points are triple (no double points), each line contains four triple points, and $\mathcal{L}$ is unique up to projective equivalence; it realizes the equality case $\gamma(\mathcal{L})=8/3$ in Theorem \ref{somm}.}\label{fig:dual_Hesse}
\end{figure}

\begin{example}\label{ex:limit_eight_three}
Fix the dual Hesse arrangement $\mathcal{L}$ of 9 lines with only triple intersections (exactly 12), cf. Figure \ref{fig:dual_Hesse}. For the desingularized Kummer covers $Y_n$ associated to $\mathcal{L}$ \eqref{XN}, the Chern number formulas \ref{eq:limit_gamma} give
\begin{equation}
\frac{c_1^2(Y_n)}{c_2(Y_n)} = \frac{24n^2-60n+33}{9n^2-30n+36}\xrightarrow[n\to\infty]{}\frac{24}{9}=\frac{8}{3}.
\end{equation}

Thus $\frac{8}{3}$ occurs as a \textit{limit} of Chern slopes of smooth surfaces $Y_n$. 
\end{example}

\begin{remark}
In \cite{Iitaka}, Iitaka claimed falsely that for complex line arrangements one always has $\gamma(\mathcal{L}) \leq \frac{5}{2}$, and this mistake was already indicated by Hirzebruch in \cite{Hirzebruch}.
\end{remark}
\begin{remark}
The original proof of Sommese contains a computational mistake at the very beginning of his proof, namely claims that the condition $\gamma(\mathcal{L}) \geq \frac{8}{3}$ implies that
\[\sum_{m\geq 5}(m-4)t_{m} + d + 3\geq \textbf{3}\cdot t_{2}+t_{3}\]
but in fact we have
\[\sum_{m\geq 5}(m-4)t_{m} + d + 3\geq 2t_{2}+t_{3}.\]
This seems to be a small error, but it is rather significant due to all combinatorial considerations preformed within the proof. This error was already gently pointed out in \cite[Kapitel 3.4 I]{BHH87}, where the authors gave a modified proof, compared to Sommese's, of the above result. Our proof presented above can be placed in between, since we use Sommese's ideas with necessary modifications.
\end{remark}
To conclude this section, we would like to elaborate a bit on a particular research problem and some predictions regarding the possible values of characteristic numbers for line arrangements.

The most comprehensive result devoted to the geography of characteristic numbers of real line arrangements is the following \cite{EFU}.
\begin{proposition}
The set of accumulations points of characteristic numbers of real line arrangements is the interval $[2, \frac{5}{2}]$.    
\end{proposition}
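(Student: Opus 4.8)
The statement to prove is that the set of accumulation points of characteristic numbers $\gamma(\mathcal{L})$ of real line arrangements $\mathcal{L} \subset \mathbb{P}^{2}$ is exactly the interval $[2, \tfrac{5}{2}]$. The plan is to establish the two inclusions separately: that every accumulation point lies in $[2, \tfrac52]$, and that every point of $[2,\tfrac52]$ actually occurs as an accumulation point. For the upper bound of the containing interval I would invoke Theorem \ref{charR}: for every real arrangement with $t_d = t_{d-1} = 0$ one has $\gamma(\mathcal{L}) \le \tfrac52$, so no accumulation point can exceed $\tfrac52$; and we already exhibited (via the polyhedral family $\mathcal{P}_k$) that $\tfrac52$ itself is attained infinitely often, hence is an accumulation point. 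For the lower bound one must show $\gamma(\mathcal{L}) \to 2$ is the infimum of accumulation points, i.e. that $\gamma$ cannot accumulate below $2$. Writing $\gamma(\mathcal{L}) = \tfrac{9 - 5d + 3f_1 - 4f_0}{3 - 2d + f_1 - f_0}$ with $f_0 = \sum_{r\ge 2} t_r$, $f_1 = \sum_{r\ge2} r t_r$, a direct computation shows $\gamma(\mathcal{L}) \ge 2$ is equivalent to $f_1 - f_0 - 2d + 3 \le \tfrac12(\text{something nonnegative})$ — more precisely one checks that $\gamma \ge 2 \iff 3 + f_1 - f_0 \le 2d$ fails only for finitely many combinatorial types, or better, that $\gamma < 2$ forces a bound $d \le c$ for an absolute constant $c$ (analogous to the extraction of $d \le 9$ in the proof of Theorem \ref{somm}, but now using Melchior's inequality in the real setting). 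Since there are only finitely many such small arrangements, their $\gamma$-values form a finite set, which contributes no accumulation points below $2$.

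The substantive half is the reverse inclusion: constructing, for each $\alpha \in [2, \tfrac52]$, an infinite family of real line arrangements whose characteristic numbers converge to $\alpha$. The natural strategy is a \emph{combination / superposition} construction. One starts from a simplicial (or near-simplicial) family realizing $\gamma = \tfrac52$ and a family realizing $\gamma$ near $2$ — e.g. near-pencils or arrangements dominated by a single high-multiplicity point, for which $\gamma \to 2$ — and then takes disjoint-support unions or iterated "adding a generic line / adding a line through a prescribed point" operations to interpolate. The key analytic tool is that the log-Chern numbers $\bar c_1^2$ and $\bar c_2$ are additive-type quantities under the blow-up construction (Proposition \ref{chernn}): when one forms an arrangement by overlaying pieces or by a controlled sequence of elementary modifications, both numerator and denominator of $\gamma$ change by explicitly computable amounts, so that by choosing how many copies/steps of each building block to use one can steer the ratio $\bar c_1^2 / \bar c_2$ to any target rational in the range, and then pass to a limit to cover the irrationals. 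Concretely, if $(\bar c_1^2)' = \bar c_1^2 + a$ and $\bar c_2{}' = \bar c_2 + b$ after one elementary step, then iterating drives $\gamma$ toward $a/b$; choosing elementary steps with $a/b$ spanning a range that brackets $[2,\tfrac52]$ and mixing them gives density.

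The step I expect to be the main obstacle is the \emph{realizability over $\mathbb{R}$ with the correct combinatorics}: it is easy to write down abstract incidence data $(d, t_2, t_3, \dots)$ that would yield a prescribed $\gamma$, but one must exhibit genuine real line arrangements having that data, and in particular keep $t_d = t_{d-1} = 0$ so that Theorem \ref{charR} and the formula for $\gamma$ apply, and keep the arrangement in general enough position that no unintended higher-multiplicity points appear. Ensuring this for an entire infinite family, uniformly, is where the real work lies — one typically needs an explicit geometric recipe (moving lines in a pencil, perturbing a simplicial arrangement, adjoining lines through controlled points) together with a genericity argument guaranteeing that the perturbation introduces only ordinary double points. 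Once such a flexible family is in hand, the limit computation via Proposition \ref{chernn} is routine, and combined with the first half this pins down the accumulation set as exactly $[2, \tfrac52]$. (One should also remark that this is consistent with, and sharpened by, the fact that $\tfrac52$ is itself attained, so the interval is closed on the right; closedness on the left at $2$ follows since one can make $\gamma$ converge to $2$ from above through, say, large polygonal-type or pencil-augmented families.)
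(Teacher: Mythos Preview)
The paper does not supply its own proof of this proposition; it is quoted from \cite{EFU} without argument. So there is no in-paper proof to compare against, and I evaluate your sketch on its own merits.

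Your treatment of the upper endpoint is fine: Theorem~\ref{charR} caps $\gamma$ at $\tfrac52$, and the polyhedral family shows $\tfrac52$ is attained infinitely often. But your argument for ``no accumulation below $2$'' contains a genuine error. You assert that $\gamma(\mathcal{L})<2$ forces $d\le c$ for an absolute constant, hence only finitely many arrangements have $\gamma<2$. This is false. The generic (purely nodal) arrangement of $d\ge 4$ lines has
\[
\gamma \;=\; \frac{(d-3)^2}{(d-2)(d-3)/2} \;=\; \frac{2(d-3)}{d-2} \;<\; 2
\]
for \emph{every} $d\ge 4$, and this tends to $2$ from below. So infinitely many real arrangements have $\gamma<2$; the correct claim is only that such values cannot accumulate at any point strictly below $2$. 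Establishing that requires an honest asymptotic estimate --- for instance, showing that $\bar c_2$ grows quadratically in $d$ while $2\bar c_2-\bar c_1^2 = d-3-\sum_{r\ge 3}(r-2)t_r$ is at most linear in $d$, so that $2-\gamma = O(1/d)$ uniformly --- and not the finiteness argument you give.

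For the density half, your interpolation-by-mixing idea is indeed the strategy used in \cite{EFU}, but you have stopped at the outline: you name the obstacle (realizing the prescribed incidences over $\mathbb{R}$ without accidental high-multiplicity points) without resolving it. In \cite{EFU} this is handled by exhibiting explicit one-parameter families of real arrangements (built from simplicial pieces and generic lines) and tracking the log Chern numbers through the construction; your sketch would need to be fleshed out to that level before it constitutes a proof.
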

However, what shall we say in the case of complex line arrangements?
\begin{example}
Let us consider the $n$-th CEVA arrangement $\mathcal{L}_{n} \subset \mathbb{P}^{2}$ of $\, 3n \, $ lines with $t_{3}(\mathcal{L}_{n})=n^{2}$ and $t_{n}(\mathcal{L}_{n})=3$. In particular, for $n=3$ arrangement $\mathcal{L}_{3}$ is exactly the dual Hesse arrangement of $9$ lines with $t_{3}=12$. Then
\[\gamma(\mathcal{L}_{n}) = \frac{5n^2-6n-3}{2n^{2}-3n}.\]
It is easy to see that the sequence composed with the characteristic numbers associated with $\mathcal{L}_{n}$ is decreasing and $\textrm{lim}_{n\rightarrow \infty} \gamma(\mathcal{L}_{n}) = \frac{5}{2}$.
\end{example}
Consider an interesting complex line arrangement with no double intersection points. Such arrangements are very rare, and we believe they must be related to complex reflection groups \cite[Open Problem 1.1.6.]{HarB}.
\begin{example}
Let us consider the Klein arrangement $\mathcal{K}$ of $21$ lines with $t_{3}(\mathcal{K})=28$ and $t_{4}(\mathcal{K})=21$. Then
\[\gamma(\mathcal{K}) = \frac{53}{20}.\]
Observe that $\gamma(\mathcal{K})$ is very close to the theoretical upper-bound.
\end{example}
We must mention that the CEVA family of arrangements is the only known sequence of line arrangements for which the characteristic numbers fall within the interval $(\frac{5}{2}, \frac{8}{3}]$. Based on this observation, we propose the following extremely challenging conjecture:
\begin{conjecture}
For a given $\varepsilon >0$ there exists only finitely many line arrangements $\mathcal{L}$ in the complex projective plane such that $\gamma(\mathcal{L}) > 2.5 + \varepsilon$.
\end{conjecture}

\section{Conic-line arrangements and their characteristic numbers}
In this section, we will study arrangements of conics and lines in the complex plane that only have ordinary singularities. This is the first generalization from line arrangements in the complex plane. However, from the perspectives of topology and geometry, this class of curves behaves completely differently, see for instance \cite{PSz} as an introduction. First, we will present our setup to prepare for the discussion.

Let $\mathcal{CL}\subset \mathbb{P}^{2}$ be an arrangement consisting of $d$ lines and $k$ smooth conics. We assume that $\mathcal{CL}$ is transverse, i.e., all the intersection points are ordinary. Obviously arrangements of conics and lines admit complicated non-ordinary singularities, but here, due to our setting revolved around the geography problem, we focus only on the ordinary singularities. First of all, for our conic-line arrangements in the complex plane the following naive combinatorial count holds:
\begin{equation}
4\cdot\binom{k}{2} + 2kd + \binom{d}{2} = \sum_{r\geq 2} \binom{r}{2}t_{r}.
\end{equation}
Moreover, we have the following Hirzebruch-type inequality recently proved by Pardini and Pokora in \cite{RPPP}.
\begin{theorem}
\label{HirCL}
Let $\mathcal{CL} \subset \mathbb{P}^{2}$ be an arrangement of $d$ lines and $k\geq 3$ smooth conics admitting only ordinary singularities and such that $t_{k+d}=0$. Then one has
\begin{equation}
5k + t_{2} + t_{3} \geq d + \sum_{r\geq 5}(r-4)t_{r}.
\end{equation}
\end{theorem}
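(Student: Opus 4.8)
The plan is to deduce Theorem~\ref{HirCL} from an \emph{orbifold} refinement of the Bogomolov--Miyaoka--Yau inequality, in the spirit of Miyaoka's proof \cite{M84} and of the logarithmic version behind the Miyaoka--Sakai inequality. The reason one cannot simply invoke Miyaoka--Sakai for the reduced pair is that, expanding the Chern numbers of $(Y,\widetilde{\mathcal{CL}})$ via Proposition~\ref{chernn}, the inequality $\bar c_1^2\le 3\bar c_2$ only gives $d-2k\le\sum_{r\ge 2}t_r$, which is strictly weaker than the asserted bound as soon as points of multiplicity $\ge 4$ occur; exactly as in the step from the crude bound to Hirzebruch's inequality~\eqref{ball} and its strengthening, one must instead put a $\mathbb{Q}$-divisor boundary on $\mathbb{P}^2$ whose coefficients are adapted to the singularities. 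An equivalent, more classical route is Hirzebruch's: form an abelian cover of $\mathbb{P}^2$ branched along $\mathcal{CL}$ with an independent character for each line and each conic, resolve it, apply the smooth BMY inequality, and optimize over the exponent; I will describe the orbifold version.

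Concretely, fix a rational $\alpha\in(0,1]$ and consider the boundary $\Delta_\alpha=\alpha\sum_i C_i$ on $\mathbb{P}^2$, the sum running over all $d$ lines and $k$ conics. Let $\sigma\colon Y\to\mathbb{P}^2$ blow up every $r$-fold point of $\mathcal{CL}$ with $r\ge 3$, with exceptional curves $E_p$, and set $\widetilde{\Delta}_\alpha=\alpha\sum_i\widetilde{C}_i+\sum_p\beta(r,\alpha)\,E_p$, where $\beta(r,\alpha)$ is the coefficient giving the pair its optimal log canonical orbifold structure at the $r$-fold point $p$. The key structural observation is that $\beta(r,\alpha)$, and the local orbifold Euler contribution of $p$, depend only on $(r,\alpha)$ and not on whether the branches through $p$ are lines or conics, since each branch is locally a smooth disc; hence all conic-specific information enters solely through the global invariants $\sum_i C_i^2=d+4k$ and $\sum_i(g(C_i)-1)=-(d+k)$, and this is precisely what produces the coefficient $5k$.

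The substantial step is to check that $K_Y+\widetilde{\Delta}_\alpha$ is big and nef, so that the orbifold BMY inequality $(K_Y+\widetilde{\Delta}_\alpha)^2\le 3\,\bar e_{\mathrm{orb}}(Y,\widetilde{\Delta}_\alpha)$ is available. Bigness amounts to $\alpha(d+2k)>3$ and nefness to a finite list of intersection-number inequalities on the $E_p$ and the strict transforms $\widetilde{C}_i$; here the hypotheses that $\mathcal{CL}$ is transverse, connected, has $k\ge 3$ conics, and satisfies $t_{d+k}=0$ are used to rule out degenerate pencil-type configurations, just as in the line case. Granting this, one expands: the left-hand side is $(K_{\mathbb{P}^2}+\Delta_\alpha)^2$ minus the contributions of the $E_p$, a quadratic in $\alpha$ with coefficients linear in $d$, $k$ and the $t_r$; the right-hand side is $e(\mathbb{P}^2)$ minus the $\alpha$-weighted Euler numbers of the punctured components minus a sum over $r$ of $t_r$ times the local orbifold contribution of an $r$-fold point. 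This yields a one-parameter family of inequalities indexed by $\alpha$.

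Finally, one selects $\alpha$ to annihilate the unwanted coefficients, exactly as in the proof of the strengthened Hirzebruch inequality for line arrangements (which is where the coefficient $\frac{3}{4}$ on $t_3$ and the cut-off at $r\ge 5$ come from): for the optimal rational value of $\alpha$ the $t_4$-term cancels and the $t_r$-terms with $r\ge 5$ assemble into $-\sum_{r\ge 5}(r-4)t_r$, and substituting gives precisely $5k+t_2+t_3\ge d+\sum_{r\ge 5}(r-4)t_r$. I expect two points to be delicate: verifying bigness and nefness of $K_Y+\widetilde{\Delta}_\alpha$ over the admissible range of $\alpha$ (this is where the arrangement hypotheses really bite, and it may force a separate, finite analysis of small configurations), and computing the local orbifold Euler number of an ordinary $r$-fold point as an exact function of $(r,\alpha)$ so that the final bookkeeping closes up to the stated inequality and not to a weaker one.
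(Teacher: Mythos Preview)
The paper does not supply a self-contained proof of Theorem~\ref{HirCL}; it quotes the result from \cite{RPPP} and indicates only the method: one imitates Hirzebruch's argument by constructing an abelian cover of $\mathbb{P}^2$ branched along $\mathcal{CL}$ (the existence of which, unlike for line arrangements, is not elementary and is obtained from Pardini's general theory \cite{RP}), computes the Chern numbers of the minimal resolution, applies the smooth BMY inequality, and evaluates the resulting Hirzebruch polynomial at exponent $n=3$. Your orbifold/log-BMY plan is the standard reinterpretation of exactly this procedure, with the dictionary $\alpha=1-1/n$, and you correctly flag the abelian-cover route as equivalent; so in spirit your proposal matches what the paper points to, even if the paper does not itself develop the orbifold language.

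Two points, however, deserve correction or sharpening. First, you conflate the target with the \emph{strengthened} Hirzebruch inequality: the stated bound has coefficient $1$ on $t_3$, not $\tfrac{3}{4}$, so it is the conic-line analogue of the basic inequality~\eqref{ball}, obtained at $n=3$ (equivalently $\alpha=\tfrac{2}{3}$), and no further optimization in $\alpha$ is required or helpful here. Fixing $\alpha=\tfrac{2}{3}$ from the outset would also simplify your bigness/nefness discussion, since $\deg(K_{\mathbb{P}^2}+\tfrac{2}{3}D)=-3+\tfrac{2}{3}(d+2k)>0$ already follows from $k\ge 3$, and the nefness check on strict transforms and exceptionals becomes a finite list of explicit inequalities in $r$. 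Second, what you present is still an outline rather than a proof: the local orbifold Euler contribution at an ordinary $r$-fold point and the global bookkeeping that produces precisely $5k$ (from $\sum C_i^2=d+4k$ together with $\sum(g(C_i)-1)=-(d+k)$) are left to the reader. In the abelian-cover approach these are exactly the computations carried out in \cite{RPPP}; if you pursue the orbifold route you must carry them out explicitly, and the hypotheses $k\ge 3$ and $t_{d+k}=0$ enter in exactly the places you anticipate, namely to guarantee that the relevant log pair has non-negative Kodaira dimension so that the BMY-type inequality applies.
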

Notice that in the above result we do not assume that $d$ is non-negative and, in particular, this result holds for conic arrangements in the plane with ordinary singularities.

In order to get Hirzebruch-type inequality for conic-line arrangements, one can use Hirzebruch's ideas, i.e., we can construct an abelian cover of the complex plane branched along conic-line arrangements with ordinary singularities. It is worth mentioning that the existence of such a covering is not straightforward, and in order to do so we can use a general result by Pardini \cite{RP}.

In the setting of conic-line arrangements in the plane, we have the following log Chern numbers
\begin{align*}
\overline{c}_{1}^{2}(Y, \widetilde{\mathcal{CL}})&= 9 - 8k - 5d + 3f_{1} - 4f_{0},\\    
\overline{c}_{2}(Y, \widetilde{\mathcal{CL}}) &= 3 -2k - 2d + f_{1} - f_{0},
\end{align*}
where $\widetilde{\mathcal{CL}}$ denotes the reduced total transform of $\mathcal{CL}$ under the blowing up of the complex plane along points of multiplicity $\geq 3$. In the same Hirzebruch's spirit, we define the characteristic number of $\mathcal{CL}$ as
\begin{equation}
\gamma(\mathcal{CL}) = \dfrac{9 - 8k - 5d + 3f_{1} -4f_{0}}{3 -2k - 2d + f_{1} - f_{0}}.
\end{equation}
The first important result is devoted to the second log Chern number of $\mathcal{CL}$.
\begin{proposition}[{\cite[Proposition 3.8]{RPPP}}]
\label{prop: c2}
Let $\mathcal{CL} = \{\ell_{1}, \ldots, \ell_{d}, C_{1}, \ldots, C_{k}\} \subset \mathbb{P}^{2}_{\mathbb{C}}$ be an arrangement of $d$ lines and $k\geq 3$ smooth conics admitting only ordinary singularities and such that $t_{d+k}=0$. 
Then
\[3-2k-2d+\sum_{r\geq 2}(r-1)t_{r}>0.\] 
\end{proposition}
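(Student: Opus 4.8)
The plan is to prove the (equivalent) positivity of $\bar c_2(Y,\widetilde{\mathcal{CL}})$ by deleting the components of $\mathcal{CL}$ one at a time and controlling how the invariant changes. For a transverse arrangement $\mathcal{A}\subset\mathbb{P}^2_{\mathbb{C}}$ consisting of $a$ smooth conics and $b$ lines with only ordinary singularities, set
\[
\mu(\mathcal{A}) := 3 - 2a - 2b + \sum_{r\ge 2}(r-1)\,t_r(\mathcal{A}),
\]
which by the log-Chern formulas recalled above (cf. Proposition~\ref{chernn}) is exactly $\bar c_2$ of the associated log surface; indeed $\mu(\mathcal{A})=e(\mathbb{P}^2\setminus\mathcal{A})$, which is the conceptual reason for what follows, although the argument below is purely combinatorial. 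So it suffices to show $\mu(\mathcal{CL})>0$.

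First I would establish the deletion recursion. Let $C$ be a component of $\mathcal{A}$, put $\mathcal{A}'=\mathcal{A}\setminus C$, and let $n_C$ denote the number of points at which $C$ meets $\mathcal{A}'$. Deleting $C$ increases the term $-2a-2b$ by $2$; moreover each singular point of $\mathcal{A}$ lying on $C$ has its multiplicity drop by exactly one (because $C$ is smooth and the singularity is ordinary), which lowers $\sum_{r\ge 2}(r-1)t_r$ by exactly one at that point — including the case $r=2$, where the point simply disappears from the count — while the points off $C$ are untouched. Hence
\[
\mu(\mathcal{A}) = \mu(\mathcal{A}') + n_C - 2 .
\]
Next I would record two elementary consequences of B\'ezout's theorem and transversality: a line and a smooth conic of such an arrangement always meet in exactly $2$ distinct points, and two distinct smooth conics always meet in exactly $4$ distinct points. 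Therefore: (i) if $\mathcal{A}$ contains at least one conic and $C$ is a line of $\mathcal{A}$, then $n_C\ge 2$, so $\mu(\mathcal{A})\ge\mu(\mathcal{A}\setminus C)$; and (ii) if $\mathcal{A}$ contains at least two conics and $C$ is a conic of $\mathcal{A}$, then $n_C\ge 4$, so $\mu(\mathcal{A})\ge\mu(\mathcal{A}\setminus C)+2$.

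Now I would run the induction. Starting from $\mathcal{CL}$, which has $k\ge 3$ conics and $d$ lines, delete the $d$ lines one by one: all $k\ge 3$ conics survive throughout this phase, so by (i) the value of $\mu$ never decreases, giving $\mu(\mathcal{CL})\ge\mu(\{C_1,\dots,C_k\})$. Then delete the conics one by one down to a single conic $C$: as long as at least two conics remain, (ii) applies and $\mu$ grows by at least $2$ at each step, while $\mu(\{C\})=3-2=1$. Hence $\mu(\{C_1,\dots,C_k\})\ge 1+2(k-1)=2k-1$, and so
\[
\bar c_2(Y,\widetilde{\mathcal{CL}}) = \mu(\mathcal{CL}) \ge 2k-1 \ge 5 > 0 ,
\]
which proves the claim (with a bit to spare). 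I do not anticipate a genuine obstacle: the only subtlety is organizational — one must delete the lines before the conics, so that a conic is always present to guarantee $n_C\ge 2$ in (i) — together with getting the bookkeeping in the recursion exactly right, in particular treating the disappearance of $2$-fold points correctly. If one wanted to avoid invoking the log-Chern formulas at all, one could instead define $\mu(\mathcal{A}):=e(\mathbb{P}^2\setminus\mathcal{A})$ from the start, in which case the displayed recursion is simply additivity of the Euler characteristic.
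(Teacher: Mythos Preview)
Your proof is correct. Note, however, that the paper itself does not supply a proof of this proposition --- it is simply quoted from \cite[Proposition~3.8]{RPPP} --- so there is no in-text argument to compare against directly. That said, your deletion--induction method is exactly the one the survey \emph{does} present a few paragraphs earlier for the pure line case (the proof, reproduced from \cite{EFU}, that $\bar c_1^2$ and $\bar c_2$ are positive for line arrangements with $t_d=t_{d-1}=0$), so your argument is the natural extension of what is already in the text. The only adaptation is that, instead of selecting a line through at least three points to force a strict increase, you exploit the presence of conics: a line always meets a surviving conic in two distinct points and a conic meets another conic in four, giving $n_C\ge 2$ (resp.\ $n_C\ge 4$) automatically under the ordinary-singularities hypothesis. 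As a bonus your argument yields the quantitative bound $\bar c_2(Y,\widetilde{\mathcal{CL}})\ge 2k-1\ge 5$ and never uses the hypothesis $t_{d+k}=0$.
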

Now we are ready to prove our result in this section.
\begin{theorem}
\label{charn}
Let $\mathcal{CL} \subset \mathbb{P}^{2}$ be an arrangement of $d$ lines and $k\geq 3$ smooth conics such that $t_{k+d}=0$. Then $\gamma(\mathcal{CL}) < \frac{8}{3}$.
\end{theorem}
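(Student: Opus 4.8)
The plan is to mimic the proof of Theorem \ref{somm} but with the conic-line version of the stronger Hirzebruch inequality (Theorem \ref{HirCL}) and with the positivity of $\bar c_2$ supplied by Proposition \ref{prop: c2}. First I would assume for contradiction that $\gamma(\mathcal{CL}) \geq \frac{8}{3}$. Writing out $\bar c_1^2 = 9-8k-5d+3f_1-4f_0$ and $\bar c_2 = 3-2k-2d+f_1-f_0$ and clearing denominators (legitimately, since $\bar c_2>0$ by Proposition \ref{prop: c2}), the inequality $3\bar c_1^2 \geq 8\bar c_2$ should rearrange, after substituting $f_0=\sum_{r\geq2}t_r$ and $f_1=\sum_{r\geq2}rt_r$, into something of the shape
\[
d + 8k + 3 + \sum_{r\geq 5}(r-4)t_r \;\geq\; 2t_2 + t_3.
\]
Then I would feed in Theorem \ref{HirCL}, i.e. $5k + t_2 + t_3 \geq d + \sum_{r\geq 5}(r-4)t_r$, in the same way Sommese's argument uses the stronger line inequality: multiply it by $\tfrac{4}{3}$ and add, so that the $t_3$ terms and the $\sum_{r\geq5}(r-4)t_r$ terms combine to eliminate $t_3$ and partially cancel the higher-fold contribution, leaving a clean bound of the form
\[
2t_2 + d + \tfrac{4}{3}\sum_{r\geq5}(r-4)t_r + (\text{small multiple of }k) \;\leq\; 9 + c\,k
\]
for some explicit constant $c$. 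The key point that makes the theorem come out with a \emph{strict} inequality rather than equality (unlike the $8/3$-threshold for lines) is that the extra $5k$ on the left of Theorem \ref{HirCL} versus the $8k$ appearing on the right-hand side of the rearranged Chern inequality should force the residual inequality to have no solutions at all for $k\geq 3$ — the conic contribution $8k$ grows faster than what the Hirzebruch bound can absorb, so the bounded region in the $(d,t_2,t_5,t_6,\dots)$ parameters is empty once $k\geq 3$.

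The main obstacle, and the part I would be most careful about, is the exact bookkeeping of the coefficient of $k$ after combining the two inequalities: I need to check that the net coefficient of $k$ is strictly positive (or at least that the resulting inequality is violated), because the whole strictness of $\gamma(\mathcal{CL})<\frac83$ hinges on the conics never contributing enough to reach the bound. Concretely, from $3\bar c_1^2 - 8\bar c_2 \geq 0$ one gets a $-8k$ type term moved to the right as $+8k$, while the Hirzebruch inequality only gives back $+5k$ after we multiply by $\tfrac43$ that becomes $+\tfrac{20}{3}k$ — so I expect the residual to read roughly $2t_2 + d + \tfrac43\sum_{r\geq5}(r-4)t_r + \tfrac43 k \cdot(\text{something}) \leq 9 - \alpha k$ with $\alpha>0$, which is impossible for $k\geq 3$ since the left side is nonnegative (indeed $d$ can be taken $\geq 0$, or if one worries about $d$ being unconstrained in sign one appeals to the transversality/irreducibility of the arrangement). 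If a naive combination does not immediately close, the fallback is to run the same case analysis on small $d$ and small values of $t_5,t_6,\dots$ that Sommese-style arguments use, together with the naive combinatorial count $4\binom{k}{2}+2kd+\binom d2 = \sum_{r\geq2}\binom r2 t_r$ to rule out the finitely many remaining numerical cases; but I anticipate that the $8k$-vs-$5k$ gap makes the bare inequality already contradictory, so the proof should be short. I would conclude by noting explicitly where strictness comes from, contrasting with the line case where equality is attained by the dual Hesse arrangement.
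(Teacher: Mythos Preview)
Your overall plan is the paper's: assume $\gamma(\mathcal{CL})\geq \tfrac{8}{3}$, clear denominators via Proposition~\ref{prop: c2}, combine with Theorem~\ref{HirCL}, and extract a contradiction from $k\geq 3$. Your intuition that the ``$8k$ versus $5k$ gap'' alone kills the inequality, with no Sommese-style case analysis, is exactly right.

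Two concrete slips, though. First, your displayed rearrangement has $8k$ on the wrong side: expanding $3\bar c_1^2 \geq 8\bar c_2$ gives
\[
3 + d + \sum_{r\geq 5}(r-4)t_r \;\geq\; 8k + 2t_2 + t_3,
\]
not the version with $+8k$ on the left (your later paragraph gets this right, so the display is just a transcription error). Second, the $\tfrac{4}{3}$ scaling is an artifact of the \emph{stronger} Hirzebruch inequality for lines, which carries a $\tfrac{3}{4}t_3$; Theorem~\ref{HirCL} has a full $t_3$, so no scaling is needed or helpful. The paper sidesteps both issues by lumping everything into $f_1-4f_0=\sum_{r\geq 2}(r-4)t_r$: the Chern inequality reads $3+d+\sum_{r\geq 2}(r-4)t_r\geq 8k$, while Theorem~\ref{HirCL} rewrites (absorbing $-2t_2-t_3$ into the sum) as $5k-t_2\geq d+\sum_{r\geq 2}(r-4)t_r$. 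Chaining gives $3+5k-t_2\geq 8k$, i.e.\ $3\geq 3k+t_2$, impossible for $k\geq 3$. Three lines, no case analysis, no naive count --- so your fallback plan is never invoked.
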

\begin{proof}
 Assume by contradiction that $\gamma(\mathcal{CL}) \geq \frac{8}{3}$, namely 
\[\frac{9-8k-5d + \sum_{r\geq 2}(3r-4)t_{r}}{3-2d-2k + \sum_{r\geq 2}(r-1)t_{r}}\geq \frac{8}{3}.\]
By  Proposition \ref{prop: c2}, the denominator $3-2d-2k+ \sum_{r\geq 2}(r-1)t_{r}$ is strictly positive, so we get 
\[3+d+f_{1}-4f_{0}\geq 8k.\]
By the Hirzebruch-type inequality from Theorem \ref{HirCL}, we have 
\[5k-t_{2}\geq d+ \sum_{r\geq 2}(r-4)t_{r},\]
which implies that
\[3+5k-t_{2} \geq 3+d+\sum_{r\geq 2}(r-4)t_{r} \geq 8k.\]
This gives us
\[3\geq 3k+t_{2},\]
but $k\geq 3$ and $t_{2}\geq 0$, so we arrive at a contradiction.
\end{proof}
\begin{remark}
Our result positively answers on Urz\'ua's question formulated in his doctoral thesis \cite[Section 2.5.1]{Urzua}, where he asked whether
\[\gamma(\mathcal{CL}) \leq \frac{8}{3}\]
holds for conic-line arrangements in the complex plane with ordinary singularities.
\end{remark}
\begin{corollary}
In the setting of Theorem \ref{HirCL}, we have
\[8k + 2t_{2}+t_{3} > 3+d + \sum_{r\geq 5}(r-4)t_{r}.\]
\end{corollary}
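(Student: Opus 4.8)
The plan is to derive the asserted inequality directly from the Hirzebruch-type bound of Theorem \ref{HirCL} together with the standing hypothesis $k\geq 3$; in fact the statement is, up to bookkeeping, just the reformulation of the estimate $\gamma(\mathcal{CL})<\frac{8}{3}$ that already appears inside the proof of Theorem \ref{charn}, so no new geometric input is needed.

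Concretely, I would first rewrite Theorem \ref{HirCL} in the form
\[d + \sum_{r\geq 5}(r-4)t_{r} \leq 5k + t_{2} + t_{3},\]
which is legitimate since we are precisely in the setting of that theorem. Adding $3k+t_{2}$ to both sides gives
\[3k + t_{2} + d + \sum_{r\geq 5}(r-4)t_{r} \leq 8k + 2t_{2} + t_{3}.\]
It then suffices to observe that the left-hand side strictly dominates $3 + d + \sum_{r\geq 5}(r-4)t_{r}$: indeed $k\geq 3$ forces $3k\geq 9$ and $t_{2}\geq 0$, so $3k+t_{2}\geq 9>3$, and the claimed strict inequality follows with slack $3k+t_{2}-3\geq 6$. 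An alternative route, which makes the link with the characteristic number transparent, is to start from $\gamma(\mathcal{CL})<\frac{8}{3}$ — valid here because, exactly as in the proof of Theorem \ref{charn}, the denominator $3-2k-2d+\sum_{r\geq 2}(r-1)t_{r}$ equals $\overline{c}_{2}(Y,\widetilde{\mathcal{CL}})$ and is strictly positive by Proposition \ref{prop: c2} — then cross-multiply, expand using $f_{0}=\sum_{r\geq 2}t_{r}$, $f_{1}=\sum_{r\geq 2}rt_{r}$, and split off the negative coefficients via $\sum_{r\geq 2}(r-4)t_{r}=-2t_{2}-t_{3}+\sum_{r\geq 5}(r-4)t_{r}$; after collecting terms one lands exactly on $3+d+\sum_{r\geq 5}(r-4)t_{r}<8k+2t_{2}+t_{3}$.

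I do not expect any genuine obstacle: the only points requiring a moment's care are making sure we are entitled to clear the denominator $\overline{c}_{2}(Y,\widetilde{\mathcal{CL}})$ (guaranteed by Proposition \ref{prop: c2}) and correctly tracking the sign of the coefficient $(r-4)$ for $r\in\{2,3,4\}$ when splitting $\sum_{r\geq 2}(r-4)t_{r}$; everything else is elementary linear manipulation in the $t_{r}$'s, and the hypothesis $k\geq 3$ is what upgrades the bound $\gamma(\mathcal{CL})<\frac{8}{3}$ to the stated strict inequality.
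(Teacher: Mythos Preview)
Your proposal is correct. The paper does not write out a separate proof of this corollary; its placement immediately after Theorem \ref{charn} signals that the intended argument is precisely your second route: unwind the strict inequality $\gamma(\mathcal{CL})<\tfrac{8}{3}$ (legitimate since $\overline{c}_{2}(Y,\widetilde{\mathcal{CL}})>0$ by Proposition \ref{prop: c2}) via the identity $\sum_{r\geq 2}(r-4)t_{r}=-2t_{2}-t_{3}+\sum_{r\geq 5}(r-4)t_{r}$, which yields exactly $3+d+\sum_{r\geq 5}(r-4)t_{r}<8k+2t_{2}+t_{3}$. Your first route---adding $3k+t_{2}$ to both sides of Theorem \ref{HirCL} and then invoking $k\geq 3$ to get $3k+t_{2}\geq 9>3$---is a pleasant direct shortcut that bypasses the characteristic number entirely and even shows the inequality holds with slack at least $6$; this is a mildly sharper and more elementary argument than the one the paper has in mind.
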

\begin{proof}
It follows from the inequality below (and some computations):
\[\frac{9-8k-5d + \sum_{r\geq 2}(3r-4)t_{r}}{3-2d-2k + \sum_{r\geq 2}(r-1)t_{r}} < \frac{8}{3}.\]
\end{proof}
Now we provide a sharper bound on the characteristic numbers under the assumption that intersection points are only double and triple points.
\begin{proposition}
Let $\mathcal{CL} \subset \mathbb{P}^{2}$ be an arrangement of $d$ lines and $k\geq 3$ smooth conics such that the only intersection points are ordinary double and triple points. Then
\[\gamma(\mathcal{CL}) < \frac{5}{2}.\]
\end{proposition}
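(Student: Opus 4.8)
The strategy mirrors the proof of Theorem~\ref{charn}, but we push the numerics harder by exploiting the restriction that only double and triple points occur. First I would assume for contradiction that $\gamma(\mathcal{CL}) \geq \frac{5}{2}$. Since by Proposition~\ref{prop: c2} the denominator $3-2k-2d+f_1-f_0$ is strictly positive, clearing denominators in
\[
\frac{9-8k-5d + \sum_{r\geq 2}(3r-4)t_{r}}{3-2d-2k + \sum_{r\geq 2}(r-1)t_{r}}\geq \frac{5}{2}
\]
yields $2\bigl(9-8k-5d+3f_1-4f_0\bigr)\geq 5\bigl(3-2k-2d+f_1-f_0\bigr)$, i.e.\ after simplification an inequality of the shape $3 + f_1 - 3f_0 \geq 6k$ (possibly with a surviving $d$-term; I would compute the coefficient of $d$ carefully, but it should cancel as in the $8/3$ case). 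Because only $t_2$ and $t_3$ are nonzero we have $f_1 = 2t_2 + 3t_3$ and $f_0 = t_2 + t_3$, so $f_1 - 3f_0 = -t_2$. Thus the contradiction hypothesis becomes something like $3 - t_2 \geq 6k$, or whatever the exact constant is.

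Next I would bring in the Hirzebruch-type inequality of Theorem~\ref{HirCL}, which under the two-point hypothesis reads $5k + t_2 + t_3 \geq d$ (the sum $\sum_{r\geq 5}(r-4)t_r$ vanishes). This alone may not suffice, so the key additional ingredient is the sharper bound hidden in the proof of Theorem~\ref{charn} — namely the Corollary just above this Proposition, which specializes (again with $t_r=0$ for $r\geq 5$) to $8k + 2t_2 + t_3 > 3 + d$. Combining the derived inequality $3 - t_2 \geq 6k$ (forcing $k\leq 0$, already absurd since $k\geq 3$) should give the contradiction directly; if instead a $d$-dependent term survives in my reduction, I would eliminate $d$ using Theorem~\ref{HirCL} to express $d \leq 5k + t_2 + t_3$ and then close the estimate, landing again on an inequality that $k\geq 3$ and $t_2,t_3\geq 0$ violate.

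The main obstacle I anticipate is bookkeeping the coefficient of $d$ when clearing denominators: in the $8/3$ computation the $d$-terms conspired to cancel, and I expect the same here, but with the threshold $5/2$ the arithmetic is different and I should verify whether $d$ genuinely drops out or whether an auxiliary application of Theorem~\ref{HirCL} is needed to absorb it. A secondary subtlety is making sure that the specialization of Theorem~\ref{HirCL} and of the Corollary to the ``double and triple points only'' regime is the one actually used, so that no hypothesis like $t_{k+d}=0$ or $t_{k+d-1}=0$ is silently required beyond what the Proposition states; if such a vanishing is needed, I would either add it or argue that a $(k+d)$-fold or $(k+d-1)$-fold point is incompatible with $\gamma(\mathcal{CL})\geq\frac52$ by a direct combinatorial count using the naive identity $4\binom{k}{2}+2kd+\binom{d}{2}=\sum_{r\geq 2}\binom{r}{2}t_r$.
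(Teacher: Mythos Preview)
Your approach is exactly the paper's: assume $\gamma(\mathcal{CL})\geq\tfrac52$, use Proposition~\ref{prop: c2} to clear denominators, and reduce to $3\geq 6k+t_{2}$, contradicting $k\geq 3$. The $d$-terms do cancel (the coefficient is $-10$ on both sides), so your anticipated fallback to Theorem~\ref{HirCL} or the Corollary is never needed; the argument is purely arithmetic once the denominator is known to be positive.
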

\begin{proof}
Recall that
\[\overline{c}_{2}(Y,\widetilde{\mathcal{CL}})=3-2d-2k+t_{2} + 2t_{3}> 0\]
by Proposition \ref{prop: c2}.

In order to prove our statement, assume by contradiction $\gamma(\mathcal{CL}) \geq \frac{5}{2}$. It means that
\[\frac{9-8k-5d+2t_{2}+5t_{3}}{3-2d-2k+t_{2}+2t_{3}}\geq \frac{5}{2}.\]
This gives us
\[18 - 16k - 10d +4t_{2} + 10t_{3} \geq 15 - 10d - 10k + 5t_{2} + 10t_{3},\]
so finally we arrive at
\[3 \geq 6k+t_{2}.\]
Since $k\geq 3$, we get a contradiction.
\end{proof}
Here we present a conic-line arrangement with the highest known value of the characteristic number.
\begin{example}[Klein's arrangement of conics and lines]
In \cite{PR2019}, Pokora and Ro\'e described in detail a conic-line arrangement $\mathcal{CL}$ consisting of $21$ lines and $21$ conics, and it has $42$ double points, $252$ triple points, and $189$ quadruple points. We can compute that
\[\gamma(\mathcal{CL}) = \frac{9-8\cdot 21 - 5\cdot 21 + 2\cdot 42 + 5\cdot 252 + 8\cdot 189}{3-2\cdot 21 - 2\cdot 21 + 42 + 2\cdot 252 + 3\cdot 189} \approx 2.512,\] and, to the best of our knowledge, this is the highest known value for slopes in the class of conic-line arrangements.
\end{example}
To complete the picture, let us also consider the case when $d = 0$, resulting in arrangements of smooth conics with ordinary singularities. To a certain extent, we can say that Naruki's work on conic arrangements with nodes and tacnodes as singularities is the first modern paper devoted purely to conic arrangements in the complex plane \cite{Naruki}. Although inspired by Hirzebruch's work on abelian covers and line arrangements, Tang was the first researcher to study questions related to algebraic surfaces constructed using conic arrangements and abelian covers \cite{Tang}. Similar to Hirzebruch's work, Tang introduced the following.
\begin{definition}[{\cite[p. 439]{Tang}}]
Let $\mathcal{C}\subset \mathbb{P}^{2}$ be an arrangement of $k\geq 4$ smooth conics with ordinary singularities such that $t_{k}=0$. The characteristic number of $\mathcal{C}$ is defined as
\[\gamma(\mathcal{C})= 4 - \dfrac{\sum_{r\geq 2}rt_{r}+3}{3-2k + \sum_{r\geq 2}(r-1)t_{r}}.\]
\end{definition}
Using almost the same argument as in the case of conic-line arrangements with ordinary singularities, we can show the following \cite{Pokora1}.
\begin{proposition}
Let $\mathcal{C}\subset \mathbb{P}^{2}$ be an arrangement of $k\geq 4$ smooth conics with only ordinary singularities such that $t_{k}=0$. Then one has
\[\gamma(\mathcal{C}) < \frac{8}{3}.\]
\end{proposition}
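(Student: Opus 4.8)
The plan is to imitate, essentially verbatim, the proof of Theorem~\ref{charn}, specializing the conic-line bookkeeping to the case of no lines, $d = 0$. The two inputs I would invoke are the positivity statement of Proposition~\ref{prop: c2} and the Hirzebruch-type inequality of Theorem~\ref{HirCL}; although both are phrased for conic-line arrangements, the remark following Theorem~\ref{HirCL} records that they remain valid for pure conic arrangements, and the hypothesis $t_k = 0$ is precisely the instance $t_{d+k} = 0$ that both results require. Concretely, setting $d = 0$ in Proposition~\ref{chernn} (as specialized in this section) gives $\overline{c}_1^2(Y,\widetilde{\mathcal{C}}) = 9 - 8k + \sum_{r\geq 2}(3r-4)t_r$ and $\overline{c}_2(Y,\widetilde{\mathcal{C}}) = 3 - 2k + \sum_{r\geq 2}(r-1)t_r$, the latter being $>0$ by Proposition~\ref{prop: c2}; Tang's definition then unwinds to $\gamma(\mathcal{C}) = \overline{c}_1^2(Y,\widetilde{\mathcal{C}}) / \overline{c}_2(Y,\widetilde{\mathcal{C}})$, and Theorem~\ref{HirCL} becomes $5k + t_2 + t_3 \geq \sum_{r\geq 5}(r-4)t_r$.

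Next I would argue by contradiction. Suppose $\gamma(\mathcal{C}) \geq \frac{8}{3}$. Since $\overline{c}_2(Y,\widetilde{\mathcal{C}}) > 0$, we may clear the denominator in $3\,\overline{c}_1^2(Y,\widetilde{\mathcal{C}}) \geq 8\,\overline{c}_2(Y,\widetilde{\mathcal{C}})$; collecting terms reduces this to $3 + \sum_{r\geq 2}(r-4)t_r \geq 8k$, i.e.\ $3 - 2t_2 - t_3 + \sum_{r\geq 5}(r-4)t_r \geq 8k$. Feeding in the conic Hirzebruch-type inequality in the form $\sum_{r\geq 5}(r-4)t_r \leq 5k + t_2 + t_3$ then yields $3 - t_2 + 5k \geq 8k$, that is $3 \geq 3k + t_2$. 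Since $k \geq 4$ and $t_2 \geq 0$, this is absurd, and therefore $\gamma(\mathcal{C}) < \frac{8}{3}$.

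I do not expect a genuine obstacle: once the correct specializations to $d=0$ are recorded, the argument is a one-line piece of linear arithmetic in the entries $t_r$, exactly parallel to the conic-line computation in Theorem~\ref{charn}. The only point deserving a word of care is the legitimacy of applying Proposition~\ref{prop: c2} and Theorem~\ref{HirCL} when no lines are present, which, as noted above, is explicitly covered by the remark after Theorem~\ref{HirCL}. As in the conic-line case, one may further observe that the terminal inequality $3 \geq 3k + t_2$ fails by a wide margin for $k \geq 4$, which is exactly why the bound is the strict one $\gamma(\mathcal{C}) < \frac{8}{3}$ rather than $\gamma(\mathcal{C}) \leq \frac{8}{3}$.
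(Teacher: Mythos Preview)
Your proposal is correct and follows exactly the route the paper indicates: the paper does not spell out a separate proof but says ``using almost the same argument as in the case of conic-line arrangements,'' and your specialization of the proof of Theorem~\ref{charn} to $d=0$ is precisely that argument, with the final contradiction $3\geq 3k+t_{2}$ failing already for $k\geq 2$ (so in particular for the stated $k\geq 4$). The only tiny imprecision is that the remark after Theorem~\ref{HirCL} concerns only that theorem, not Proposition~\ref{prop: c2}; but Proposition~\ref{prop: c2} as stated does not require $d\geq 1$, so it applies directly with $d=0$ and $t_{d+k}=t_{k}=0$.
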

To conclude this section, we would like to elaborate on our expectations regarding the characteristic numbers for conic-line arrangements. As with line arrangements in the complex plane, we are unaware of any constructions of families of arrangements of $d$ lines and $k$ conics with $d+k \rightarrow \infty$ such that $\gamma(\mathcal{CL}) > \frac{5}{2}$. This can be explained by the simple fact that constructing symmetric arrangements of curves in the plane is notoriously difficult. Even extreme cases of line arrangements with $\gamma(\mathcal{L}) > 5/2$ are specific due to their underlying properties; these arrangements are related to complex reflection groups. Based on our computational experiments, we can formulate the following challenging conjecture:

\begin{conjecture}
For a given $\varepsilon >0$ there exists only finitely many conic-line arrangements $\mathcal{CL}$ with ordinary singularities in the complex projective plane such that $\gamma(\mathcal{CL}) > 2.5 + \varepsilon$.
\end{conjecture}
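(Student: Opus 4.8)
We do not settle this conjecture; what follows is a sketch of how one might try to attack it, and I should stress at the outset that the analogous statement for line arrangements stated above is itself open, so the conic–line version cannot be expected to be any easier.

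The first thing I would do is recast the conjecture combinatorially. For fixed $d$ and $k$ the naive count $4\binom{k}{2}+2kd+\binom{d}{2}=\sum_{r\ge2}\binom{r}{2}t_{r}$ bounds every $t_{r}$ as well as the largest $r$ with $t_{r}\neq0$, leaving only finitely many admissible weight vectors $(t_{r})_{r\ge2}$, hence only finitely many combinatorial types of $\mathcal{CL}$. So it suffices to produce a function $N(\varepsilon)$ such that $\gamma(\mathcal{CL})>\frac32+\frac12$... more precisely $\gamma(\mathcal{CL})>\frac52+\varepsilon$ forces $d+k\le N(\varepsilon)$; equivalently, one must show that along every sequence of conic–line arrangements with $d+k\to\infty$ one has $\limsup\gamma(\mathcal{CL})\le\frac52$.

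Next I would unwind the hypothesis exactly as in the proof of Theorem~\ref{charn}. Setting $f_{0}=\sum_{r\ge2}t_{r}$ and $f_{1}=\sum_{r\ge2}rt_{r}$, and using Proposition~\ref{prop: c2} to clear the (positive) denominator, the inequality $\gamma(\mathcal{CL})\ge\frac52+\varepsilon$ becomes the linear inequality
\[\left(\frac12-\varepsilon\right)\sum_{r\ge2}(r-3)t_{r}-2\varepsilon f_{0}\ge(3-2\varepsilon)k-2\varepsilon d-\left(\frac32-3\varepsilon\right),\]
where $\sum_{r\ge2}(r-3)t_{r}=-t_{2}+\sum_{r\ge4}(r-3)t_{r}$, so the hypothesis really demands a large supply of high-multiplicity points relative to $k$. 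I would then add a positive multiple of the Hirzebruch-type inequality of Theorem~\ref{HirCL}, which after the same manipulation reads $5k-t_{2}-d-f_{1}+4f_{0}\ge0$, with the multiplier chosen to cancel the $f_{1}$-term; this is the elimination that closes the argument in Theorem~\ref{charn} (and in Sommese's Theorem~\ref{somm} for lines). The aim is that for $d+k$ large the combined inequality becomes impossible.

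Here is where I expect the plan to stall, and this is the genuine obstacle. Carrying out the elimination with Theorem~\ref{HirCL} as it stands leaves an inequality that still contains $\sum_{r\ge3}t_{r}$ — in particular $t_{3}$ and $t_{4}$ — with a coefficient of the \emph{wrong} sign, favourable to the hypothesis, precisely because Theorem~\ref{HirCL} assigns too little weight to triple and quadruple points. For line arrangements this defect is partly repaired by the stronger Hirzebruch inequality $t_{2}+\frac34t_{3}\ge d+\sum_{r\ge5}(r-4)t_{r}$, and the analogue one would want here is a refinement of the form $5k+t_{2}+\alpha t_{3}+\beta t_{4}\ge d+\sum_{r\ge5}(r-4)t_{r}$ with $\alpha,\beta$ large enough to absorb the bad terms; I would try to produce it by revisiting the Pardini--Pokora abelian-cover construction and optimising the branch weights, or by inserting Langer-type local log-BMY contributions at points of multiplicity $3$ and $4$. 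Even with such an improvement in hand, the decisive difficulty remains the one already blocking the line case: the known Hirzebruch-type inequalities are saturated along whole infinite families of slope exactly $\frac52$ (the CEVA arrangements $\mathcal{L}_{n}$, the polyhedral families $\mathcal{P}_{k}$), and nothing currently forces $\gamma$ strictly below $\frac52+\varepsilon$ for \emph{all} large arrangements. A complete proof would therefore amount to classifying, or at least severely constraining, conic–line arrangements of extremal slope, for which I would expect one must either prove the line case first and then bound the contribution of the $k$ conics by a degeneration argument, or bring in invariants of the complement beyond the log-Chern numbers (characteristic and resonance varieties, or the Hodge theory and arithmetic of the surfaces $Y$). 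As a realistic partial target I would aim to settle the conjecture for families with bounded maximal multiplicity $\max\{r:t_{r}\neq0\}$, where a strengthened Hirzebruch inequality should already suffice, and to pin down the exact value of $\limsup\gamma(\mathcal{CL})$ over all conic–line arrangements with ordinary singularities.
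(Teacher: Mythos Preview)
The statement is a \emph{conjecture} in the paper, and the paper gives no proof of it; it is explicitly formulated as an open problem following the discussion of conic--line arrangements, by analogy with the equally open conjecture for line arrangements. You correctly recognise this at the outset and do not claim to settle it. Your heuristic outline---reducing to a bound on $d+k$, linearising the slope condition via Proposition~\ref{prop: c2}, combining with Theorem~\ref{HirCL}, and identifying the need for a strengthened Hirzebruch-type inequality with better weights at triple and quadruple points---is a reasonable roadmap and is consistent with the techniques used elsewhere in the paper, but since there is no proof in the paper to compare against, there is nothing further to evaluate.
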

 
\section{Log-surfaces constructed as pairs of K3 surfaces and rational curve arrangements}
In this section we will study log-surfaces constructed as pairs consisting of smooth complex projective $K3$ surfaces and transverse arrangements of smooth rational curves. This setting is completely different from our previous case of the complex plane. First of all, smooth rational curves in $K3$ surfaces are rigid, which causes some perturbations. Up to now we were working with ample curves with positive self-intersections, but from now on we are working with curves that may form arrangements of disjoint curves, and their self-intersections are negative. This change is drastic, and our intuition, based on our considerations for arrangements in the complex plane, turns out to be completely misleading. Firstly, since we have to build our intuition  in this new setting, we start with recalling some basic properties of rational curve arrangements in $K3$ surfaces and we present several useful result that we will use in the whole section. Our considerations will be supported by many examples that should help to understand better all the discrepancies compared to the complex plane setting. 

From now on, let $X$ be a smooth complex projective $K3$ (i.e., a smooth complex projective surface having the irregularity $q(X) = 0$ and $K_{X} = \mathcal{O}_{X}$) and let $\mathcal{C}$ be a transverse arrangement of smooth rational curves in $X$. Recall that by a transverse arrangement we mean an arrangement admitting only ordinary singularities, and our arrangement is connected, i.e., we do not allow the case of arrangements containing some disjoint rational curves in $X$. Let $\pi : Y \rightarrow X$ be the blowing up of $X$ along $k\geq 3$-fold points of $\mathcal{C}$, and denote by $\widetilde{\mathcal{C}}$ the reduced total transform of $\mathcal{C}$. The log Chern numbers of the pair $(Y, \widetilde{\mathcal{C}})$ are the following:
\[\overline{c}_{1}^{2}(Y, \widetilde{\mathcal{C}}) = - 2n + 3f_{1}-4f_{0},\]
\[\overline{c}_{2}(Y, \widetilde{\mathcal{C}})= 24 - 2n + f_{1} - f_{0}.\]
Here we will only consider pairs in which both $\overline{c}_1^2(Y, \widetilde{\mathcal{C}})$ and $\overline{c}_2(Y, \widetilde{\mathcal{C}})$ are positive.
\begin{definition}
In the setting as above, we define the log Chern slope of a given pair $(Y, \widetilde{\mathcal{C}})$ consisting of a smooth complex projective $K3$ surface and a transverse arrangement of rational curves as
\[E(Y, \widetilde{\mathcal{C}}) = \frac{- 2n + 3f_{1}-4f_{0}}{24 - 2n + f_{1}-f_{0}}.\]
\end{definition}
In order to approach Problem \ref{main} devoted to the geography of log-surfaces, we are going to use the following result proven by Pokora and Laface in \cite{LP}.
\begin{theorem}
\label{Hir}
Let $X$ be a smooth complex projective $K3$ surface and let $\mathcal{C}$ be a transverse arrangement  of $n\geq 2$ smooth rational curves. Then we have
\begin{equation}
4n - t_{2} + \sum_{r\geq 3}(r-4)t_{r} \leq 72.
\end{equation}
\end{theorem}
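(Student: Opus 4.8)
The plan is to run the same Hirzebruch-covering machinery used for line and conic-line arrangements, but now over a $K3$ surface. First I would set up an abelian (Kummer-type) cover of the $K3$ surface $X$ branched along the arrangement $\mathcal{C} = C_1 \cup \dots \cup C_n$ of smooth rational curves. Since each $C_i$ is a smooth rational $(-2)$-curve, the linear-algebra constraints for building such a cover are governed by the intersection lattice of the $C_i$ in $\mathrm{NS}(X)$; as in the conic-line case one invokes Pardini's general existence theorem \cite{RP} to produce a suitable $(\mathbb{Z}/n\mathbb{Z})$-cover $Y_n \to X$ after blowing up the $k$-fold points with $k \geq 3$. The key point is that the Chern numbers of a smooth model of $Y_n$ are, after dividing by the degree of the cover, polynomials in $n$ whose leading ($n^2$-) coefficients are exactly the log-Chern numbers $\bar c_1^2(Y,\widetilde{\mathcal{C}})$ and $\bar c_2(Y,\widetilde{\mathcal{C}})$ computed in the excerpt, namely $-2n + 3f_1 - 4f_0$ and $24 - 2n + f_1 - f_0$ (here I am overloading $n$; in the write-up one renames the covering exponent, say $m$).

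Next I would apply the Miyaoka--Sakai inequality (or the BMY inequality for the smooth model $Y_m$, once one checks $Y_m$ has nonnegative log-Kodaira dimension, which follows because the boundary is semistable and the $K3$ has trivial canonical class so $K_X + \mathcal{C}$ is effective and big for $n \geq 2$). Writing $\bar c_1^2(Y_m,\widetilde{\mathcal C}_m) \leq 3\,\bar c_2(Y_m,\widetilde{\mathcal C}_m)$ and expanding both sides as quadratic polynomials in the covering exponent $m$, the $m^2$-terms give precisely $\bar c_1^2(Y,\widetilde{\mathcal C}) \leq 3\,\bar c_2(Y,\widetilde{\mathcal C})$, i.e. the log Miyaoka--Sakai inequality
\[
-2n + 3f_1 - 4f_0 \;\leq\; 3\bigl(24 - 2n + f_1 - f_0\bigr).
\]
Simplifying, $-2n + 3f_1 - 4f_0 \leq 72 - 6n + 3f_1 - 3f_0$, hence $4n - f_0 \leq 72$. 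Substituting $f_0 = \sum_{r\geq 2} t_r = t_2 + \sum_{r\geq 3} t_r$ does not immediately give the claimed form, so the refinement below is needed.

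To get the stronger bound with the weighted sum $\sum_{r\geq 3}(r-4)t_r$ I would not use the crude BMY inequality but rather the sharper local orbifold analysis at the blown-up points, exactly in the spirit of Hirzebruch's "stronger inequality'': at each $k$-fold point the cover acquires either quotient singularities contributing a correction term, or one exploits that the exceptional $\mathbb{P}^1$'s over triple points are themselves $(-1)$-curves that can be contracted, improving the Chern-number bookkeeping. Concretely, the contribution of an $r$-fold point to the defect in Miyaoka's inequality is the one that upgrades the coefficient of $t_r$ from the naive value to $(r-4)$ for $r \geq 3$ and to $(-1)$ for $r=2$; this is the same mechanism that turns the weak Hirzebruch inequality \eqref{ball} into the stronger $t_2 + \tfrac34 t_3 \geq d + \sum_{r\geq 5}(r-4)t_r$ in the line case. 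Assembling these local corrections into the global inequality and reading off the $m^2$-coefficient yields
\[
4n - t_2 + \sum_{r\geq 3}(r-4)t_r \;\leq\; 72.
\]

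The main obstacle I anticipate is the construction and desingularization of the cover: unlike $\mathbb{P}^2$, a $K3$ surface need not carry the divisor classes needed to realize $\mathcal{C}$ (or a multiple of it) as a branch locus for a cyclic cover, so one must either pass to an auxiliary class in $\mathrm{NS}(X)$ or argue directly via an orbifold-Chern-class / Miyaoka-Sakai computation on the blow-up $Y$ itself, bypassing the explicit cover. The cleanest route, which I would adopt in the final write-up, is the latter: apply the Miyaoka--Sakai inequality directly to $(Y,\widetilde{\mathcal C})$ together with the logarithmic Bogomolov--Sakai refinement that accounts for the $(-2)$-curves and the exceptional divisors, so that the $72$ on the right-hand side appears as $3\,e(X) = 3 \cdot 24$ and the left-hand side collects the self-intersection and singularity contributions. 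Verifying that all the hypotheses of Miyaoka--Sakai (semistability of $\widetilde{\mathcal C}$, log-general-type) hold in this setting — in particular that no component of $\widetilde{\mathcal C}$ is a $\mathbb{P}^1$ meeting the rest in only one point, which is where the connectedness and the $n \geq 2$ assumptions enter — is the delicate bookkeeping step.
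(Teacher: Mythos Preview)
The paper does \emph{not} prove this theorem: it is quoted from Laface--Pokora \cite{LP} and used as a black box. So there is no ``paper's own proof'' to compare against, and your task is really to reconstruct the argument of \cite{LP}.

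Your proposal circles the right ideas but does not close. Two concrete gaps:

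\textbf{(1) The cover approach is genuinely obstructed, and you know it.} For $\mathbb{P}^2$ the Kummer cover exists because every line bundle is a power of $\mathcal{O}(1)$; on a $K3$ there is no reason the class $\sum C_i$ (or any prescribed combination) is $m$-divisible in $\mathrm{Pic}(X)$, and Pardini's theorem still needs the building data to satisfy linear-equivalence relations. You flag this and retreat to the direct route, which is the right call, but it means the first half of the proposal is not a proof.

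\textbf{(2) The direct log-Miyaoka--Sakai route gives the wrong inequality, and your ``refinement'' is a placeholder.} You correctly compute that $\bar c_1^2 \le 3\bar c_2$ on $(Y,\widetilde{\mathcal C})$ yields only $4n - f_0 \le 72$, i.e.\ $4n - \sum_{r\ge 2} t_r \le 72$. The target inequality has coefficient $r-4$ on $t_r$ for $r\ge 3$, which is \emph{strictly larger} than $-1$ once $r\ge 4$; so the theorem is genuinely stronger and cannot be obtained by massaging the same inequality. Your appeal to ``local orbifold analysis'' and a ``Bogomolov--Sakai refinement'' names the missing mechanism without supplying it. Moreover, the hypotheses you need for Miyaoka--Sakai are not automatic: $K_X+\mathcal C=\mathcal C$ need not be big for a connected arrangement of $(-2)$-curves (two $(-2)$-curves meeting once have $(\sum C_i)^2=-2$), and semistability of $\widetilde{\mathcal C}$ can fail for components meeting the rest in a single point.

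What actually works in \cite{LP} is Miyaoka's orbifold Bogomolov--Miyaoka--Yau inequality \cite{M84} applied not to the log pair $(Y,\widetilde{\mathcal C})$ but to a suitable orbifold obtained by assigning fractional multiplicities (equivalently, by contracting to quotient singularities). The local contribution of an $r$-fold ordinary point to the orbifold Euler defect is what produces the weight $(r-4)$ rather than $-1$, and the global term $3c_2(X)-c_1^2(X)=72$ appears because $X$ is $K3$. If you want to complete the argument, look up the precise statement of Miyaoka's inequality for pairs $(X,\alpha D)$ with $K_X+\alpha D$ nef (or the version with quotient singularities), compute the local orbifold Euler numbers at the ordinary $r$-fold points, and optimize in the weight $\alpha$; this is the step your proposal gestures at but does not carry out.
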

Our first goal here is to find some combinatorial conditions for the transverse arrangement of rational curves $\mathcal{C}$ in a smooth complex projective $K3$ surface such that the associated log-surface could have high log Chern slopes. We want to find a numerically accessible condition that is easy to check. To do this, we assume now that $E(Y, \widetilde{\mathcal{C}}) \geq 8/3$.
This condition tells us that
\[ 3\cdot\bigg(-2n + \sum_{r\geq 2}(3r-4)t_{r} \bigg) \geq 8 \cdot \bigg(24 - 2n + \sum_{r\geq 2}(r-1)t_{r} \bigg).\]
This leads us to
\begin{equation}
10n + \sum_{r\geq 2}(r-4)t_{r} \geq 192.
\end{equation}
Using Theorem \ref{Hir}, we get
\[72+6n - t_{2}\geq 10n + \sum_{r\geq 2}(r-4)t_{r} \geq 192.\]
Based on that, we set our criterion, namely
\begin{equation}
\label{condition}
n > 20 + \frac{1}{6}t_{2}.
\end{equation}

Now our main goal is to decide whether there exists a pair consisting of a smooth complex projective $K3$ surfaces $X$ and a transverse arrangement of rational curves $\mathcal{C}$ in $X$ such that \eqref{condition} holds and indeed $E(Y, \widetilde{\mathcal{C}}) \geq 8/3$. It turns out that the answer is affirmative and we have some interesting examples.
\begin{proposition}
\label{maint}
There exists a configuration of $32$ lines $\mathcal{C}$ in the Fermat quartic surface $\mathcal{F}_{4} \, : \, x^4 + y^4 + z^4 + w^4 = 0$ in $\mathbb{P}^{3}$ such that
\[E(Y, \widetilde{\mathcal{C}}) = 8/3.\]
\end{proposition}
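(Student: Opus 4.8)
The goal is to exhibit a concrete transverse arrangement $\mathcal{C}$ of $32$ smooth rational curves (lines) on the Fermat quartic $\mathcal{F}_4 \subset \mathbb{P}^3$ whose associated log surface $(Y,\widetilde{\mathcal{C}})$ satisfies $E(Y,\widetilde{\mathcal{C}}) = 8/3$. Since $n = 32 > 20 + \tfrac16 t_2$ is exactly the regime flagged by condition \eqref{condition}, the strategy is to find a highly symmetric sub-configuration of the $48$ lines on $\mathcal{F}_4$ and compute $f_0 = \sum_{r\ge 2} t_r$ and $f_1 = \sum_{r\ge 2} r t_r$ from its combinatorics, then verify the single numerical identity
\[
3\bigl(-2n + 3f_1 - 4f_0\bigr) = 8\bigl(24 - 2n + f_1 - f_0\bigr).
\]
Plugging $n=32$ this reduces to $f_1 = 16 f_0 - 192$, so the whole computation boils down to pinning down the incidence vector $(t_2, t_3, \ldots)$ of the chosen $32$ lines and checking this one linear relation.

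**Carrying it out.** First I would recall the classical description of the $48$ lines on the Fermat quartic: they come in two types — the $16$ "obvious" lines lying in the coordinate-type planes (e.g. $x + \zeta y = 0$, $z + \zeta' w = 0$ with $\zeta^4 = \zeta'^4 = -1$, and the analogous $\{x,z\},\{x,w\}$ groupings), and the remaining $32$ lines obtained from these by the action of the automorphism group, or equivalently parametrized using primitive $8$th roots of unity. The natural candidate is to take $\mathcal{C}$ to be precisely the $32$ non-obvious lines (or one of the other natural $32$-element orbits under a subgroup of $\operatorname{Aut}(\mathcal{F}_4) = (\mathbb{Z}/4)^3 \rtimes S_4$). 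Second, I would tabulate the singularities: each line on $\mathcal{F}_4$ has self-intersection $-2$, and two coplanar lines on the quartic that span a hyperplane section meet according to how the residual conic/line degenerates, so the pairwise intersection pattern is governed by the hyperplane-section geometry. Working out which triples (and possibly quadruples) of the $32$ chosen lines are concurrent — this is the content of the combinatorial lemma needed — gives the numbers $t_k$. Third, I would check connectedness of the union (required by the definition of transverse arrangement) and that all singularities are ordinary, i.e. no two of the chosen lines are tangent and no infinitely-near behavior occurs; for distinct lines this is automatic, but one must make sure the chosen $32$ do form a connected curve. Finally, substitute into the log-Chern formulas $\bar c_1^2 = -2n + 3f_1 - 4f_0$, $\bar c_2 = 24 - 2n + f_1 - f_0$ and confirm the ratio is exactly $8/3$.

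**The main obstacle.** The delicate point is the bookkeeping of the incidence structure of the $32$ lines: determining exactly which ones are mutually disjoint, which meet in nodes, and — crucially — which points are triple (or higher) points. On the Fermat quartic the lines meet in rich concurrence patterns (this is what makes it extremal), and an error of even one in some $t_k$ destroys the clean ratio. I expect this to be handled either by a direct coordinate computation of all $\binom{32}{2}$ intersection pairs followed by grouping coincident intersection points, or — more elegantly — by exploiting the transitive action of a large subgroup of $\operatorname{Aut}(\mathcal{F}_4)$ on the chosen line set to reduce to checking incidences through a single orbit representative. A secondary (minor) subtlety is verifying that the blow-up $Y \to \mathcal{F}_4$ at the $k \ge 3$ points genuinely yields a simple normal crossing $\widetilde{\mathcal{C}}$, which follows from ordinariness but should be stated. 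Once the incidence vector is correct, the conclusion $E(Y,\widetilde{\mathcal{C}}) = 8/3$ is a one-line arithmetic check, and by the Miyaoka–Sakai inequality this configuration is extremal — the associated log surface is, in the appropriate sense, a "log ball quotient."
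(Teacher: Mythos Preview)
Your overall plan---pick a natural $32$-line subconfiguration of the $48$ lines on $\mathcal{F}_4$, tabulate its incidence vector, and check the ratio---is exactly what the paper does. But your description of the $48$ lines is off, and this matters because it hides the structural fact that actually drives the computation. There are no ``non-obvious'' lines on the Fermat quartic: \emph{all} $48$ lines are of the coordinate-hyperplane type you wrote down. They split naturally into three families $\mathcal{C}^1,\mathcal{C}^2,\mathcal{C}^3$ of $16$ lines each, one for each of the three pairings $\{x,y\}/\{z,w\}$, $\{x,z\}/\{y,w\}$, $\{x,w\}/\{y,z\}$. The paper's key observation is that within each $\mathcal{C}^i$ the $16$ lines meet \emph{only} in $8$ quadruple points, and these three $8$-point sets are pairwise disjoint (together they account for all $24$ quadruple points of the full $48$-line configuration). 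Taking $\mathcal{C}=\mathcal{C}^i\cup\mathcal{C}^j$ for $i\neq j$ then immediately gives $t_4=16$ and $t_2=64$ (no triple points at all), from which $E=(-64+128+128)/(24-64+64+48)=192/72=8/3$ follows in one line. Your proposal anticipates triple points and a laborious $\binom{32}{2}$ pairwise check; the three-family decomposition makes this bookkeeping disappear.

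Two small corrections: your reduced linear relation is miscomputed---plugging $n=32$ into $3\bar c_1^2=8\bar c_2$ gives $f_1=4f_0-128$, not $f_1=16f_0-192$ (and indeed $f_0=80$, $f_1=192$ satisfies the former). Also, the remark about the configuration being a ``log ball quotient'' is premature: $E=8/3$ is strictly below the Miyaoka--Sakai bound of $3$, so this example is interesting precisely because it exceeds $5/2$, not because it is extremal for the inequality.
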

\begin{proof}
It is classically known that on the smooth Fermat quartic surface there exists a configuration of $48$ lines intersecting at exactly $192$ double and $24$ quadruple intersection points. As we can expect, the mentioned configuration of lines is very symmetric and due to this fact we have further unexpected properties of the whole configurations.

It turns out that the set $\mathcal{C}_{48}$ of all $48$ lines can be divided into three disjoint sets $\mathcal{C}^{1}, \mathcal{C}^{2}, \mathcal{C}^{3}$ of $16$ lines with the property that lines in each set intersect only in $8$ quadruple points and these are the only intersection points. Moreover, these sets of quadruple points are pairwise disjoint and taken together they build the set of all $24$ quadruple intersection points for the set of $48$ lines in $\mathcal{F}_{4}$.

Consider now $C:= C^{i} \cup C^{j}$ with $i\neq j$. This arrangement consists of $32$ lines intersecting at $16$ quadruple intersection points and exactly $64$ double intersection points (and there are no other intersections). Observe that we have
\[32 > 20 + \frac{1}{6}t_{2} = 20 + 10\frac{2}{3}.\]
Now we are going to compute the log Chern slope, namely
\[E(Y, \widetilde{\mathcal{C}})= \frac{-2n + 2t_{2} + 8t_{4}}{24 - 2n + t_{2} + 3t_{4}}= \frac{-64 + 128 + 128}{24-64 + 64+48} = \frac{192}{72}=\frac{8}{3},\]
which completes the proof.
\end{proof}
\begin{remark}
As our proof shows, we have found exactly $3$ different arrangements of $32$ lines having the same weak combinatorics (i.e., the same number of lines and the same number of the intersection points of given multiplicities) with the log Chern slope equal to $8/3$.
\end{remark}
\begin{remark}
If we take the whole configuration $\mathcal{C}_{48}$ of $48$ lines in the Fermat quartic surface $\mathcal{F}_{4}$ having $t_{2}=192$ and $t_{4}=24$, then we 
\[E(Y, \widetilde{\mathcal{C}_{48}}) = \frac{-96 + 384 + 192}{24 - 96 + 192 + 72}= \frac{480}{192}=\frac{5}{2}.\]
\end{remark}
\noindent
Now we construct an example of a $K3$ pair with $8/3 > E(Y, \widetilde{\mathcal{C}}) > 5/2$ using the geometry of the Schur quartic surface.
\begin{proposition}
There exists the unique configuration of $48$ lines $\mathcal{C}$ in the Schur quartic surface $\mathcal{S}_{4}$ such that
\[E(Y , \widetilde{\mathcal{C}}) = 2.56.\]
\end{proposition}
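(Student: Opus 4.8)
The plan is to locate, inside the classically-studied geometry of the Schur quartic surface $\mathcal{S}_{4}$, the famous configuration of $64$ lines, and to isolate a sub-arrangement $\mathcal{C}$ of $48$ lines whose weak combinatorics gives precisely $E(Y,\widetilde{\mathcal{C}})=2.56$. First I would recall the structure of lines on the Schur quartic: it is the smooth quartic with the maximal number of lines (Schur, later revisited by Rams--Sch\"utt), carrying $64$ lines, and the incidence pattern of these lines — together with the multiplicities $t_{2}, t_{3}$ of their intersection points — is completely known. The key step is to select a subset of $48$ of these lines that is \emph{transverse} in the sense of our definition (connected, only ordinary singularities), record the exact values $n=48$, $t_{2}$, $t_{3}$ (and verify $t_{r}=0$ for $r\geq 4$), and then simply substitute into
\[E(Y,\widetilde{\mathcal{C}})=\frac{-2n+2t_{2}+5t_{3}}{24-2n+t_{2}+2t_{3}}.\]
Since $2.56 = 64/25$, I expect the combinatorics to be something like $t_{2}$ and $t_{3}$ chosen so that numerator and denominator are in ratio $64:25$; a natural guess compatible with a $48$-line sub-configuration of the Schur $64$-line set is $t_{2}=72$, $t_{3}=120$ (giving $(-96+144+600)/(24-96+72+240)=648/240\neq 64/25$, so the actual numbers must be read off the real incidence table — I would extract them from Rams--Sch\"utt or the relevant database rather than guess).

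The second half of the statement — \textbf{uniqueness} of such a $48$-line configuration — is where the real work lies. Here the plan is to appeal to the classification of line configurations on smooth quartic surfaces: the Schur quartic is the unique smooth quartic with $64$ lines, and its automorphism group acts on the set of lines with a known orbit structure. I would argue that any $48$-line transverse sub-arrangement realizing the slope $2.56$ must have a forced combinatorial type (the values of $t_{2}, t_{3}$ are pinned down by the equation $E=64/25$ together with the naive count $4\binom{k}{2}+\cdots$ — here with $k=0$ conics — i.e. $\binom{48}{2}=\sum_{r}\binom{r}{2}t_{r}$, plus the constraint from Theorem \ref{Hir}), and then that inside the $64$-line configuration there is, up to the automorphism group of $\mathcal{S}_{4}$, exactly one way to delete $16$ lines so as to produce that combinatorial type. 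This reduces uniqueness to a finite check on the incidence graph of the $64$ lines, which I would carry out (or cite) explicitly.

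The main obstacle I anticipate is precisely this uniqueness check: "unique" is a strong claim and depends delicately on the automorphism-equivariant combinatorics of the $64$-line configuration, so I need either a clean conceptual reason why the $16$ deleted lines must form a distinguished orbit (e.g. they are the $16$ lines disjoint from a fixed line, or a coset under a subgroup), or a careful case analysis ruling out all other $48$-subsets with the same $t_{2}, t_{3}$. A secondary, more routine concern is verifying transversality and connectedness of the chosen $\mathcal{C}$ — on a quartic surface two coplanar lines meet, and one must make sure the selected $48$ lines produce only ordinary multiple points and no higher-order tangencies; this follows from smoothness of $\mathcal{S}_{4}$ and the known incidence data, but it should be stated. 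Once $n, t_{2}, t_{3}$ are in hand the slope computation itself is a one-line substitution, so I would keep that at the end and devote the bulk of the proof to pinning down and identifying the configuration.
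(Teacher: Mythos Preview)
Your overall strategy---pick out a $48$-line sub-arrangement of the $64$ lines on $\mathcal{S}_{4}$, read off $t_2,t_3$, and substitute---matches the paper's. But you are missing the single geometric fact that drives the whole argument: the $64$ lines on the Schur quartic split canonically into $16$ lines \emph{of the first kind} and $48$ lines \emph{of the second kind}, and the configuration $\mathcal{C}$ in the statement is precisely the latter set. Once you know this, there is nothing to search for: the ``unique configuration of $48$ lines'' is not the outcome of a classification among all $\binom{64}{48}$ subsets, it is a named, distinguished object. Your plan to establish uniqueness by an automorphism-orbit/case analysis is therefore misdirected; the paper does not do this and does not need to.

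Your guessed combinatorics $t_2=72$, $t_3=120$ are wrong, and the correct numbers are not obtained from Rams--Sch\"utt incidence tables but from the residual-cubic description of planes through a second-kind line (cf.\ Veniani's thesis): each such line $M$ lies in $4$ planes whose residual cubic contains two further second-kind lines meeting $M$ in a common point (producing triple points among the $48$), and in $6$ planes whose residual cubic contributes one second-kind line meeting $M$ simply. From this one reads off $t_2=144$ and $t_3=64$, with $t_r=0$ for $r\geq 4$. Then
\[
E(Y,\widetilde{\mathcal{C}})=\frac{-2\cdot 48+2\cdot 144+5\cdot 64}{24-2\cdot 48+144+2\cdot 64}=\frac{512}{200}=\frac{64}{25}=2.56.
\]
The paper also checks the heuristic criterion $n>20+\tfrac{1}{6}t_2$ along the way ($48>44$), which you might include. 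Your side remarks about transversality and connectedness are fine but secondary; the real gap in your proposal is not knowing the first-kind/second-kind dichotomy, which both identifies $\mathcal{C}$ and makes the ``uniqueness'' automatic.
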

\begin{proof}
Let us recall that on the Schur quartic surface we have exactly $64$ lines. More precisely, there are $16$ lines, called as lines of the first kind, and $48$ lines that are called as lines of the second kind -- see \cite[Definition 3.3.10]{Davide} for the definition of kinds of lines. It can be observed, and this is (maybe) a folkloric result, that the $16$ lines of the first kind intersect only in $8$ quadruple points, but we will focus on the arrangement $\mathcal{C}$ consisting of $48$ lines of the second kind. Using discussion from \cite[Section 4.2]{Davide}, we can observe the following. Let $M$ be one of the $48$ lines of the second kind. Then, there exist exactly $10$ planes containing $M$ such that the residual cubic is reducible. In particular, we have
\begin{itemize}
\item In $4$ planes the residual cubic splits into $3$ lines, two of the second kind, say $M_{1}, M_{2}$, and one of the first kind which we denote by $L_{3}$. Notice that the intersection point of $M_{1}$ and $M_{2}$ lies on $M$, but not on $L_{3}$. 
\item In $6$ planes the residual cubic splits into one irreducible conic and one line of the second kind.
\end{itemize}
Based on that combinatorial description we can deduce that the $48$ lines of the second kind intersect in $144$ double and $64$ triple intersection points. Observe that 
\[48 = n > 20 + \frac{1}{6}t_{2} = 20 + 24 = 44,\]
so we might expect that the log Chern slope is high. Indeed, we have
\[E(Y , \widetilde{\mathcal{C}}) = \frac{-2n + 2t_{2} + 5t_{3}}{24-2n + t_{2} + 2t_{3}} = \frac{64}{25} = 2.56.\]
\end{proof}

Now we are going to present an example that shows that \eqref{condition} seems to be a good condition as a simple-to-check criterion.
\begin{example}[One of the most algebraic $K3$ surface]
We consider the $A_{1}(6)$ arrangement of $6$ lines with $4$ triple, and $3$ double intersection points in the complex projective plane. We blow up the four triple points and by that we obtain a del-Pezzo surface $S$ with exactly ten $(-1)$-curves. These ten $(-1)$-curves form a simple normal crossing divisor $D$ with exactly $15$ double intersection points. After blowing up these $15$ double points, we get a surface $S'$ with $15$ curves that are $(-1)$-curves and $10$ curves that are $(-4)$-curves. Now we take a $2:1$ cover of $S'$ branched along ten $(-4)$-curves and we obtain the $K3$ surface $X$ equipped with the configuration $\mathcal{V}$ of $25$ smooth rational curves and $30$ double intersection points. Therefore we have $n=25$ and $t_{2}=30$, so we have 
\[25 = n =  20 + \frac{1}{6}t_{2}.\]
It is easy to compute log Chern numbers, namely
\[\overline{c}_{1}^{2}(Y,\widetilde{\mathcal{V}}) = -2n + 2t_{2} = -50 + 60 = 10,\]
\[\overline{c}_{2}(Y,\widetilde{\mathcal{V}}) = 24 -2n + t_{2} = 24 - 50 + 30 = 4,\]
which gives us \[E(Y,\widetilde{\mathcal{V}}) = \frac{5}{2}.\]
\end{example}

\subsection{Combinatorial constraints on K3 pairs with log Chern slopes in [2, 3].}\label{sec:comb_constr_K3}
Now we are going to deliver a complete combinatorial characterization of $K3$ pairs $(X,\mathcal{C})$ such that $E(Y,\widetilde{\mathcal{C}}) = 3$.
\begin{theorem}
\label{bl}
Let $X$ be a smooth complex projective $K3$ surface and let $\mathcal{C}$ be a transverse arrangement of $n\geq 2$ rational curves. Then the following conditions are equivalent:
\begin{enumerate}
\item[a)] $E(Y,\widetilde{\mathcal{C}}) = 3$, 
\item[b)] $4n = 72 + t_{2}+t_{3}$ and $t_{r}=0$ for $r\geq 4$.
\end{enumerate}
\end{theorem}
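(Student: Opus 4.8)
The plan is to manipulate the explicit formula for $E(Y,\widetilde{\mathcal C})$ and combine it with the inequality of Theorem~\ref{Hir}. Recall that
\[
\bar c_1^2(Y,\widetilde{\mathcal C}) = -2n + 3f_1 - 4f_0, \qquad \bar c_2(Y,\widetilde{\mathcal C}) = 24 - 2n + f_1 - f_0,
\]
where $f_0 = \sum_{r\geq 2} t_r$ and $f_1 = \sum_{r\geq 2} r t_r$. First I would observe that, since $\bar\kappa(Y,\widetilde{\mathcal C}) = 2$ in this setting and $\widetilde{\mathcal C}$ is semi-stable (the exceptional curves over the $k$-fold points meet $\widetilde{\mathcal C}$ in $k\geq 3$ points and every strict transform of a rational curve $C_i$ meets the rest in at least two points by connectedness and transversality), the Miyaoka–Sakai inequality gives $\bar c_2(Y,\widetilde{\mathcal C}) > 0$; hence $E(Y,\widetilde{\mathcal C}) = 3$ is equivalent to $\bar c_1^2 = 3\bar c_2$, i.e.
\[
-2n + 3f_1 - 4f_0 = 3(24 - 2n + f_1 - f_0),
\]
which simplifies after cancelling the $3f_1$ terms to $-2n - 4f_0 = 72 - 6n - 3f_0$, that is $4n = 72 + f_0$. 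Writing $f_0 = \sum_{r\geq 2} t_r$, this is the identity $4n = 72 + \sum_{r\geq 2} t_r$.

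The second step is to bring in Theorem~\ref{Hir}, which states $4n - t_2 + \sum_{r\geq 3}(r-4)t_r \leq 72$, i.e.
\[
4n \leq 72 + t_2 - \sum_{r\geq 3}(r-4)t_r = 72 + t_2 + 2t_3 + t_4 + 0\cdot t_4 - \sum_{r\geq 5}(r-4)t_r,
\]
wait — more precisely $72 + t_2 - \sum_{r\geq 3}(r-4)t_r = 72 + t_2 + 2t_3 + t_4 - \sum_{r\geq 5}(r-4)t_r$. Now for direction (a)$\Rightarrow$(b): assuming $E = 3$ we have the exact equality $4n = 72 + t_2 + t_3 + \sum_{r\geq 4} t_r$ from Step~1. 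Subtracting this from the Theorem~\ref{Hir} bound $4n \leq 72 + t_2 + 2t_3 + t_4 - \sum_{r\geq 5}(r-4)t_r$ gives
\[
0 \leq t_3 + (t_4 - t_4) - \sum_{r\geq 5} t_r - \sum_{r\geq 5}(r-4)t_r + \big(t_3 - t_3\big),
\]
so I will need to recompute carefully: the difference of right-hand sides is $\big(t_2 + 2t_3 + t_4 - \sum_{r\geq 5}(r-4)t_r\big) - \big(t_2 + t_3 + t_4 + \sum_{r\geq 5} t_r\big) = t_3 - \sum_{r\geq 5}(r-3)t_r$. Hence $0 \leq t_3 - \sum_{r\geq 5}(r-3)t_r$, which by itself does not force $t_r = 0$ for $r\geq 4$; so this crude subtraction is insufficient and the genuine work lies here.

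The main obstacle, therefore, is ruling out $t_r > 0$ for $r \geq 4$, and I expect this requires using Theorem~\ref{Hir} not just as a single inequality but by passing to a \emph{sub-arrangement}: given any $r$-fold point with $r\geq 4$, delete one curve $C$ through it and apply Theorem~\ref{Hir} (together with the positivity of $\bar c_2$, i.e.\ Miyaoka–Sakai) to $\mathcal C \setminus C$, tracking how $n$, $t_2$, and the higher $t_r$ change; comparing with the equality $4n = 72 + f_0$ for the full arrangement should yield a contradiction, or alternatively show that removing $C$ forces $E(Y',\widetilde{\mathcal C \setminus C})$ to exceed $3$, violating Miyaoka–Sakai. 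Once $t_r = 0$ for $r\geq 4$ is established, the equality from Step~1 becomes exactly $4n = 72 + t_2 + t_3$, which is (b). For the converse (b)$\Rightarrow$(a), I would simply substitute $t_r = 0$ for $r\geq 4$ into $f_0 = t_2 + t_3$ and $f_1 = 2t_2 + 3t_3$, plug into the formulas for $\bar c_1^2$ and $\bar c_2$, and check directly that the hypothesis $4n = 72 + t_2 + t_3$ yields $\bar c_1^2 = 3\bar c_2$; this is the routine direction. I anticipate the delicate point in (a)$\Rightarrow$(b) is organizing the sub-arrangement deletion argument so that it is genuinely valid — in particular ensuring the deleted sub-arrangement is still connected and transverse, which may require choosing the curve to delete judiciously, or handling small cases ($n$ close to the bound) separately.
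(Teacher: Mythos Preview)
Your overall structure is exactly the paper's, and the ``crude subtraction'' you abandoned is in fact the whole proof --- you simply made an arithmetic slip. When you expand $-\sum_{r\geq 3}(r-4)t_r$, the $r=3$ term contributes $-(3-4)t_3 = t_3$ (not $2t_3$) and the $r=4$ term contributes $-(4-4)t_4 = 0$ (not $t_4$). So Theorem~\ref{Hir} reads
\[
4n \;\leq\; 72 + t_2 + t_3 - \sum_{r\geq 5}(r-4)t_r.
\]
Subtracting the equality $4n = 72 + t_2 + t_3 + t_4 + \sum_{r\geq 5} t_r$ from Step~1 now gives
\[
0 \;\leq\; -t_4 - \sum_{r\geq 5}(r-3)t_r,
\]
i.e.\ $t_4 + \sum_{r\geq 5}(r-3)t_r \leq 0$, which forces $t_r = 0$ for all $r\geq 4$ since every $t_r\geq 0$. (Equivalently, and this is how the paper phrases it: substitute $4n = 72 + \sum_{r\geq 2} t_r$ directly into $4n - t_2 + \sum_{r\geq 3}(r-4)t_r \leq 72$ to get $\sum_{r\geq 3}(r-3)t_r \leq 0$, hence $\sum_{r\geq 4}(r-3)t_r \leq 0$.) The sub-arrangement deletion argument you outlined is therefore unnecessary; once the arithmetic is corrected, your Step~1 plus Theorem~\ref{Hir} finishes (a)$\Rightarrow$(b) immediately, and your plan for (b)$\Rightarrow$(a) is the same direct verification the paper gives.
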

\begin{proof}
Suppose that $E(Y,\widetilde{\mathcal{C}}) = 3$. Then this condition implies
\[-2n + \sum_{r\geq 2}(3r-4)t_{r} = 72 - 6n + \sum_{r\geq 2}(3r-3)t_{r},\]
and we get
\[4n = 72 + \sum_{r\geq 2}t_{r}.\]
Using Theorem \ref{Hir}, we obtain
\[72 + t_{2} + \sum_{r\geq 3} t_{r} - t_{2} + \sum_{r\geq 3}(r-4)t_{r} = 72 + \sum_{r\geq 3}(r-3)t_{r} \leq 72.\]
Based on that, we have 
\[0 \leq \sum_{r\geq 4}(r-3)t_{r} \leq 0,\]
where the left-hand side inequality follows from the fact that $t_{k}\geq 0$ for all $k\geq 2$. This gives us that $t_{r} = 0$ for $r\geq 4$ and we finally arrive at
\[4n = 72 +t_{2}+t_{3},\]
which completes our proof for the implication $a) \Rightarrow b)$.

Suppose now that $\mathcal{C}$ has only ordinary double and triple intersection points and $4n=72+t_{2}+t_{3}$. Then
\[E(Y,\widetilde{\mathcal{C}}) = \frac{-2n + 2t_{2}+5t_{3}}{24 -2n + t_{2}+t_{3}} = \frac{-2n + 2(4n-72)+3t_{3}}{24 -2n + 4n-72 + t_{3}} = \frac{3(t_{3}+2n-48)}{t_{3}+2n-48} = 3.\]
The proof has been completed.
\end{proof}
The above criterion is rather surprising and contra-intuitive with respect to what we have seen in the case of line arrangements and the complex projective plane since in that scenario high log Chern slopes are obtained for arrangements with high multiplicities of intersection points. Let us present the following example justifying our surprise.
\begin{example}[Special lines on the Fermat quartic surface]
As it was already mentioned, on $\mathcal{F}_{4}$ in $\mathbb{P}^{3}_{\mathbb{C}}$ there are $48$ lines and among these $48$ lines we can pick exactly $16$ lines forming an arrangement $\mathcal{C}^{i}$ with only $8$ quadruple points, where $i \in \{1,2,3\}$. Then we have
\[E(Y,\widetilde{\mathcal{C}^{i}}) = \frac{-2n+8t_{4}}{24 - 2n + 3t_{4}} = \frac{32}{16} = 2.\]
The same log Chern slope can be obtained if we take the Schur quartic surface and $16$ lines of the first kind.
\end{example}

Now we focus on arrangements with only double points. In the case of line arrangements in the complex projective plane with only double intersection points we are able to get the log Chern slope asymptotically equal to $2$ if the number of lines $d \rightarrow \infty$. In the case of rational curves and $K3$ surfaces we have a completely different picture.
\begin{proposition}\label{prop:24_lines}
Let $X$ be a smooth complex projective $K3$ surface and let $\mathcal{C}$ be an arrangement which has only double intersection points (i.e., only nodes). Then $E(X,\mathcal{C})=2$ if and only if $n=24$.
\end{proposition}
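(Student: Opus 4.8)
The arrangement $\mathcal{C}$ has only ordinary double points, so no $k$-fold point with $k\geq 3$ occurs and the blowing-up $\pi\colon Y\to X$ is the identity: $Y=X$, $\widetilde{\mathcal{C}}=\mathcal{C}$, $f_{0}=t_{2}$, $f_{1}=2t_{2}$. The plan is simply to specialise the log Chern numbers recalled at the start of the section to this case and then solve a linear equation. Substituting $f_{0}=t_{2}$ and $f_{1}=2t_{2}$ into
\[
\bar{c}_{1}^{2}(Y,\widetilde{\mathcal{C}})=-2n+3f_{1}-4f_{0},\qquad \bar{c}_{2}(Y,\widetilde{\mathcal{C}})=24-2n+f_{1}-f_{0},
\]
yields $\bar{c}_{1}^{2}(X,\mathcal{C})=2(t_{2}-n)$ and $\bar{c}_{2}(X,\mathcal{C})=24-2n+t_{2}$, whence
\[
E(X,\mathcal{C})=\frac{2(t_{2}-n)}{24-2n+t_{2}}.
\]

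For the implication ``$\Rightarrow$'' I clear denominators: $E(X,\mathcal{C})=2$ gives $2(t_{2}-n)=2(24-2n+t_{2})$, i.e. $t_{2}-n=24-2n+t_{2}$, which collapses to $n=24$. For ``$\Leftarrow$'', inserting $n=24$ into the formulas above gives $\bar{c}_{1}^{2}(X,\mathcal{C})=2(t_{2}-24)$ and $\bar{c}_{2}(X,\mathcal{C})=t_{2}-24$, so $E(X,\mathcal{C})=2$ provided $\bar{c}_{2}(X,\mathcal{C})\neq 0$, that is $t_{2}\neq 24$.

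The one point that needs a little care is thus to rule out $t_{2}=24$ when $n=24$. Here I would invoke Theorem~\ref{Hir}, which with only nodes reads $4n-t_{2}\leq 72$; for $n=24$ this gives $t_{2}\geq 24$, and $t_{2}=24$ is exactly the boundary case, for which $\bar{c}_{1}^{2}(X,\mathcal{C})=\bar{c}_{2}(X,\mathcal{C})=0$. Such a pair has vanishing log Chern numbers, hence is not of log general type and lies outside the geography problem (and $E(X,\mathcal{C})$ is then undefined), so it is legitimately excluded. If one prefers a geometric exclusion: $t_{2}=n$ forces the dual graph of $\mathcal{C}$ to be connected with first Betti number one, and deleting leaves one at a time (each leaf $C$ satisfies $D\cdot C=-1$ for $D=\sum C_{i}$, so $D^{2}=0$ is preserved along the way) reduces $\mathcal{C}$ to a cyclic configuration $I_{\ell}$ of smooth $(-2)$-curves with $2\leq\ell\leq 24$; for $\ell\geq 21$ this cannot occur on a $K3$, since the only numerical relation a cycle of $(-2)$-curves admits is $\sum_{i=1}^{\ell}C_{i}=0$, which is incompatible with $\rho(X)\leq 20$.

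I expect everything here to be routine; the sole obstacle is the bookkeeping around the degenerate locus $\bar{c}_{2}(X,\mathcal{C})=0$, and even that is settled either by the standing log-general-type framework or by the short lattice observation above.
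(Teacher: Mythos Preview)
Your computation is exactly the paper's: specialise the log Chern formulas to the nodal case and solve $-2n+2t_2 = 2(24-2n+t_2)$. The paper simply writes $\frac{-48+2t_2}{-24+t_2}=2$ for $n=24$ without commenting on the possibility $t_2=24$, so you are in fact being more careful than the original in flagging the degenerate locus $\bar c_2=0$; your resolution that $E$ is then undefined, and hence outside the scope of the statement, is the natural one.

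Your optional ``geometric exclusion'', however, does not go through as written. Reducing to the core cycle $I_\ell$ and invoking $\rho(X)\le 20$ only rules out $\ell\ge 21$; for $\ell\le 20$ such cycles certainly exist on $K3$ surfaces (e.g.\ as singular fibres of elliptic fibrations), and nothing in your argument forbids a configuration with $n=24$, $t_2=24$ whose core cycle has length $\ell\le 20$ with the remaining $24-\ell$ curves forming trees hanging off it. To make this route work you would have to control the rank of the sublattice generated by all $24$ curve classes in the N\'eron--Severi group, not just that of the cycle. Since your slope-undefined argument already disposes of the boundary case, this gap is harmless for the main proof.
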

\begin{proof}
Assume that $n=24$, then
\[E(X,\mathcal{C}) = \frac{-2n+2t_{2}}{24 - 2n + t_{2}} = \frac{-48+2t_{2}}{-24 + t_{2}}=2.\]
If now $E(X,\mathcal{C})=2$, then
\[-2n + 2t_{2} = 2(24 -2n +  t_{2}) = 48 -4n+2t_{2},\]
which gives us $n=24$.
\end{proof}

\begin{example}\label{ex:24_lines}
    Inspired by the quartic \cite[\S 5.2]{Veniani_lines} with configuration $Q_{56}$, we fix the following smooth projective quartic: 
    \[
    \begin{split}
    X:72 x_0^4+1394 x_1^2 x_0^2-306 x_2^2 x_0^2-656 x_3^2 x_0^2+72 x_1^4+72 x_2^4\\
    +72 x_3^4-656 x_1^2 x_2^2-306 x_1^2 x_3^2+1394 x_2^2 x_3^2=0    
    \end{split} \]
    We check by a direct computation, cf. \cite{auxiliary_files}, that the scheme of lines $S$ is defined over $\mathbb{Q}$ and $S(\overline{\mathbb{Q}}) = S(\mathbb{Q})$, cf. Section \ref{sec:examples}. Moreover, $|S(\mathbb{Q})|=24$, and the lines in this configuration $\mathcal{C}$ (Table \ref{tab:24_lines}) intersect at $96$ intersection points, each point is an intersection of exactly two lines. Hence $t_{2}=96$ and $t_{r}=0$ for $r>2$. It follows that $E(X,\mathcal{C})=2$, proving that Proposition \ref{prop:24_lines} is not an empty statement.
\end{example}

\subsection{Good reduction and reconstruction method of the line‑incidence graph on quartic K3 surfaces}

This section provides the methodological backbone for Section \ref{sec:comb_constr_K3} and Table \ref{tab:quartic_examples}: it formalizes good reduction of the scheme of lines and proves that, away from finitely many primes, reduction preserves the incidence graph of lines on a quartic K3 surface. Consequently the combinatorial counts $(t_r)$ we use to compute log‑Chern numbers agree in characteristic 0 and at our chosen primes. 

Let $X$ be a smooth projective quartic in $\mathbb{P}^{3}$. We denote by $\mathcal{L}$ the set of lines in $X$. For the latter we assume the following:
\begin{itemize}
	\item $X$ is defined over a number field $K$.
	\item The coordinates in the projective space $\mathbb{P}^{3}$ are $x_0,x_1,x_2$ and $x_3$.
	\item We fix once and for all an algebraic closure of $K$ and denote it by $\overline{K}$.
\end{itemize}

We denote by $\mathcal{S}$ a $0$-dimensional scheme defined over $K$ which parametrizes the lines, i.e., there exists a map $\eta: \mathcal{S}(\overline{K})\rightarrow \mathcal{L}$ such that for each point $p\in\mathcal{S}(\overline{K})$ the line $\eta(p)$ is defined over a certain finite extension of $K$ and lies on $X$. The map $\eta$ is injective.

Let $\mathcal{O}_{K}$ denote the ring of integers in the number field $K$. Let $\mathfrak{p}$ be a maximal ideal in $\mathcal{O}_{K}$ such that $\mathbb{Z}\cap\mathfrak{p} = p\mathbb{Z}$ for a certain prime number $p$.

We say that the surface $X$ has good reduction at $\mathfrak{p}$ if the surface $X$ extends to a smooth morphism $\pi:\mathcal{X}\rightarrow \Spec(\mathcal{O}_{K})_{\mathfrak{p}}=\{o,s\}$ over the localization $(\mathcal{O}_{K})_{\mathfrak{p}}$ of $\mathcal{O}_{K}$ at the maximal ideal $\mathfrak{p}$. The generic fibre $\pi^{-1}(o)$ is $X$ and the special fibre $X_{\mathfrak{p}}:=\pi^{-1}(s)$ is the \textbf{reduction modulo } $\mathfrak{p}$ of the surface $X$. The scheme $X_{\mathfrak{p}}$ is smooth over $\mathbb{F}_{\mathfrak{p}}$, the residue class field of the ideal $\mathfrak{p}$. 

For a $0$-dimensional finite scheme $S$ over $K$ we adapt this definition. We say that the scheme $S$ has good reduction at $\mathfrak{p}$ if there exists an integral model $\mathcal{S}\rightarrow\Spec\mathcal{O}_{K}$ such that the induced morphism $\mathcal{S}\rightarrow\Spec(\mathcal{O}_{K})_{\mathfrak{p}}$ is smooth. This is equivalent to the condition that the fibre $\mathcal{S}_{\mathfrak{p}}$ is a reduced scheme. Such a condition implies automatically that the degree of $S$ is the same as the degree of $\mathcal{S}_{\mathfrak{p}}$. 

Given an integral model $\mathcal{S}\rightarrow\mathcal{O}_{K}$ of the scheme $S$ we can represent $\mathcal{S}$ as a closed subscheme of certain affine space $\mathbb{A}^{n}$ of suitable dimension $n$. In the local coordinates $\{\lambda_{i}\}$ we calculate a basis of the ideal $I$, with $\mathcal{O}_{K}$-coefficients such that $V(I)=S$. Suppose that $I$ is generated by the polynomials $F_{i}\in\mathcal{O}_{K}[\lambda_{1},\ldots,\lambda_{n}]$ with $i=1,\ldots N$. We compute the Jacobian matrix $J=\left(\frac{\delta F_{i}}{\delta \lambda_{j}}\right)_{i,j}$. Since $N\geq n$, we compute the ideal $\mathcal{R}$ generated by the $n-1$ minors of $J$ and the polynomials $F_{i}$. The ideal $\mathcal{R}\cap\mathcal{O}_{K} = d\mathcal{O}_{K}$ is called the discriminant ideal of the integral model $\mathcal{S}$. It follows from the construction that if the maximal ideal $\mathfrak{p}\nmid d$, then $\mathcal{S}$ has good reduction at $\mathfrak{p}$, i.e. the scheme $\mathcal{S}_{\mathfrak{p}}$ is reduced.

We denote by $r_{\mathfrak{p}}:X(\overline{K})\rightarrow X_{\mathfrak{p}}(\overline{\mathbb{F}_{\mathfrak{p}}})$ the map which sends a point $x$ to the point $\overline{x}$ which is the reduction modulo $\mathfrak{p}$ of the point $x$, i.e. $\overline{x} = \iota(x)\cap \pi^{-1}(s)$ where $\iota:X\rightarrow \mathcal{X}$ is the natural inclusion of $X$ in $\mathcal{X}$. By abuse of notation we denote by $r_{\mathfrak{p}}$ the induced map $\mathcal{L}\rightarrow\mathcal{L}_{\mathfrak{p}}$ where the set $\mathcal{L}_{\mathfrak{p}}$ consists of the reductions modulo $\mathfrak{p}$ of the lines in $\mathcal{L}$, i.e. each line in $\mathcal{L}_{\mathfrak{p}}$ is defined by the reduction of coefficients via $r_{\mathfrak{p}}$.
To the pair $(S,\eta)$ we attach the following objects:
\begin{itemize}
	\item A finite set $\mathcal{P}(\mathcal{S},\eta)\subset \mathbb{P}^{3}(\overline{K})$ of points of intersection of every pair of lines in $\mathcal{L}$.
	\item A bipartite finite graph $\mathcal{G}(\mathcal{S},\eta) = (\mathcal{V},\mathcal{E})$, where the vertices are the set $\mathcal{V}=\mathcal{P}\cup\mathcal{L}$ and the edge set $\mathcal{E}$ consists of pairs $(p,\ell)$ of $p\in\mathcal{P}$ and $\ell\in\mathcal{L}$ such that $p\in\ell$.
\end{itemize}

We say that the scheme of lines $(\mathcal{S},\eta)$ has good reduction at $\mathfrak{p}$ when there exists a $0$-dimensional scheme $\mathcal{S}_{\mathfrak{p}}$ defined over $\mathbb{F}_{\mathfrak{p}}$ and a map $\eta_{\mathfrak{p}}$ such that $\eta_{\mathfrak{p}}: \mathcal{S}_{\mathfrak{p}}(\overline{\mathbb{F}_{\mathfrak{p}}})\rightarrow \mathcal{L}_{\mathfrak{p}}$ and:
\begin{itemize}
	\item The map $\eta_{\mathfrak{p}}$ is injective. 
	\item We have $r_{\mathfrak{p}}\circ\eta = \eta_{\mathfrak{p}}\circ r_{\mathfrak{p}}$.
	\item The finite graph $\mathcal{G}(\mathcal{S}_{\mathfrak{p}},\eta_{\mathfrak{p}})$ is isomorphic as bipartite graph with the graph $\mathcal{G}(\mathcal{S},\eta)$ where the isomorphism is induced by the reduction map $r_{\mathfrak{p}}$. 
\end{itemize}

\begin{proposition}\label{prop:good_reduction}
Let $X$ be a smooth projective quartic in $\mathbb{P}^{3}$ defined over the number field $K$. There exists a positive integer $D$ such that for each prime number $p\nmid D$ and a maximal ideal $\mathfrak{p}$ of residue characteristic $p$ the scheme of lines $(\mathcal{S},\eta)$ on $X$ has good reduction at~$\mathfrak{p}$.
\end{proposition}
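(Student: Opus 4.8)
The plan is to realize the entire datum attached to $(\mathcal{S},\eta)$ — the lines, their pairwise intersection points, and the incidences between them — as the $\overline{K}$-points of a short list of reduced $0$-dimensional $K$-schemes, and then to apply the discriminant-ideal construction recalled above to each of them. First I would fix coordinates so that $X=\{Q=0\}$ with $Q\in\mathcal{O}_{K}[x_{0},\ldots,x_{3}]$ primitive, and let $\Delta\in\mathcal{O}_{K}$ be a nonzero multiple of the discriminant of $Q$; then $X$ has good reduction at every $\mathfrak{p}\nmid\Delta$ and the special fibre $X_{\mathfrak{p}}$ is a smooth quartic over $\mathbb{F}_{\mathfrak{p}}$. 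Since $X$ is a smooth quartic, its Fano scheme of lines $F(X)\subset\mathbb{G}(1,3)$ is finite; replacing it by its reduction, we obtain a reduced $0$-dimensional $K$-scheme $\mathcal{S}_{\mathrm{red}}$ of degree $N=|\mathcal{L}|$, cut out in the affine charts of $\mathbb{G}(1,3)$ by polynomials in the coefficients of $Q$, hence already defined over $\mathcal{O}_{K}$. Because $\mathcal{L}$ is stable under $\mathrm{Gal}(\overline{K}/K)$, so is the set $\mathcal{P}$ of pairwise intersection points and so is the set $\mathcal{E}$ of flags $(p,\ell)$ with $p\in\ell$; each is therefore the set of $\overline{K}$-points of a reduced $0$-dimensional $K$-scheme (denoted again $\mathcal{P}$ and $\mathcal{E}\subset\mathcal{S}_{\mathrm{red}}\times_{K}\mathcal{P}$), of respective degrees $M=|\mathcal{P}|$ and $E=|\mathcal{E}|$, cut out by explicit equations over $\mathcal{O}_{K}$.

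Next I would take, for each of $\mathcal{S}_{\mathrm{red}}$, $\mathcal{P}$ and $\mathcal{E}$, its scheme-theoretic closure in the relevant $\mathcal{O}_{K}$-ambient as integral model (these are flat over $\mathcal{O}_{K}$), and apply the discriminant-ideal construction recalled above to each. This produces nonzero $d_{1},d_{2},d_{3}\in\mathcal{O}_{K}$ such that whenever $\mathfrak{p}$ divides none of them, the special fibres $\mathcal{S}_{\mathfrak{p}},\mathcal{P}_{\mathfrak{p}},\mathcal{E}_{\mathfrak{p}}$ are reduced; being flat with reduced fibre over the discrete valuation ring $(\mathcal{O}_{K})_{\mathfrak{p}}$ (residue field perfect), each model is then finite étale, so $r_{\mathfrak{p}}$ restricts to a bijection from the geometric points of the generic fibre onto those of the special fibre, with degrees preserved. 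For $\mathcal{S}_{\mathrm{red}}$ this gives a bijection $\mathcal{L}\to\mathcal{L}_{\mathfrak{p}}$, hence $\eta_{\mathfrak{p}}$ is injective and $r_{\mathfrak{p}}\circ\eta=\eta_{\mathfrak{p}}\circ r_{\mathfrak{p}}$; moreover, $X_{\mathfrak{p}}$ being smooth, the reductions are honest lines on it. For $\mathcal{P}$ it says the $M$ intersection points stay pairwise distinct modulo $\mathfrak{p}$. For $\mathcal{E}$, together with the fact that formation of the incidence subscheme commutes with base change, it says that the incidence correspondence between $\mathcal{S}_{\mathfrak{p}}$ and $\mathcal{L}_{\mathfrak{p}}$ has exactly $E$ edges, all of them reductions of edges over $K$; combined with the previous two bijections, $r_{\mathfrak{p}}$ induces an isomorphism of bipartite graphs $\mathcal{G}(\mathcal{S},\eta)\cong\mathcal{G}(\mathcal{S}_{\mathfrak{p}},\eta_{\mathfrak{p}})$. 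One then lets $D$ be a positive integer with $D\mathbb{Z}=(\Delta\,d_{1}d_{2}d_{3})\mathcal{O}_{K}\cap\mathbb{Z}$, enlarged if necessary to absorb the finitely many primes at which the $2\times 4$ matrix presenting some line drops rank; for $p\nmid D$ and $\mathfrak{p}\mid p$ all the above applies.

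The step I expect to be the real obstacle is the control of the incidence scheme $\mathcal{E}$ — equivalently, ruling out \emph{accidental} incidences modulo $\mathfrak{p}$, i.e. a pair of lines that is skew over $K$ but meets over $\mathbb{F}_{\mathfrak{p}}$ (or a triple point splitting off an extra coincidence). Loss of edges is automatic, since incidence is a closed condition and specializes; the absence of \emph{new} edges is exactly the assertion that the degree of the integral model of $\mathcal{E}$ is unchanged under reduction, i.e. that this model stays flat over $(\mathcal{O}_{K})_{\mathfrak{p}}$. Concretely it amounts to keeping, for each pair of lines skew over $K$, the associated $4\times 4$ skewness determinant a $\mathfrak{p}$-unit, and, for each incident pair, the relevant $3\times 3$ minors nondegenerate; there are only finitely many such conditions, so they are absorbed into $D$, but packaging them cleanly as \emph{good reduction of} $\mathcal{E}$ via the discriminant-ideal construction — rather than bookkeeping pair by pair — is the conceptual crux. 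A secondary, bureaucratic point is that the notion of good reduction and the discriminant-ideal construction recalled above apply to reduced $0$-dimensional $K$-schemes, so one must consistently replace the Fano scheme (and any other possibly non-reduced scheme here) by its reduction; once that is done, the argument is the one sketched above.
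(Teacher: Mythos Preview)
Your argument is correct and follows the same underlying strategy as the paper: encode the incidence data of the line configuration in reduced $0$-dimensional $K$-schemes, pass to integral models, and use the discriminant-ideal construction to find a $D$ such that these models have reduced special fibre (hence are finite \'etale) at every $\mathfrak{p}$ of residue characteristic not dividing $D$.

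The packaging differs. The paper works with a \emph{single} auxiliary scheme $\mathcal{T}\subset\mathcal{S}\times\mathcal{S}$ whose geometric points parametrize triples $(p,\{\ell_{1},\ell_{2}\})$ with $\ell_{1}\neq\ell_{2}$ and $\{p\}=\ell_{1}\cap\ell_{2}$, and observes that $\mathcal{M}=\mathcal{T}(\overline{K})$ already determines the bipartite graph $\mathcal{G}(\mathcal{S},\eta)$; it then takes $D$ to be the norm of the discriminant of an integral model of $\mathcal{T}$. Because the intersection condition cutting out $\mathcal{T}$ inside $\mathcal{S}\times\mathcal{S}$ commutes with base change, good reduction of $\mathcal{T}$ simultaneously forces distinct lines to stay distinct, incident pairs to stay incident with a unique intersection point, and skew pairs to stay skew --- all in one stroke. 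Your decomposition into three schemes $\mathcal{S}_{\mathrm{red}}$, $\mathcal{P}$, $\mathcal{E}$ (lines, points, flags) is finer and perhaps more transparent, and your explicit discussion of the ``accidental incidence'' obstruction --- handled by either flatness of the integral model of $\mathcal{E}$ or, equivalently, by absorbing the finitely many skewness determinants into $D$ --- is more carefully articulated than in the paper, which leaves that step somewhat implicit. Either route works; the paper's single-scheme version is more economical, yours makes the r\^ole of each combinatorial layer visible.
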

\begin{proof}
For the graph $\mathcal{G}(\mathcal{S},\eta)$ one can attach a $0$-dimensional scheme $\mathcal{T}$ defined over $K$ such that we have an injective function $\theta:\mathcal{T}(\overline{K})\rightarrow \mathcal{M}$ where $\mathcal{M}$ is a finite set which consists of elements $(p,\{\ell_{1},\ell_{2}\})$ where $\{p\} = \ell_{1}\cap\ell_{2}$ and $\ell_{1}\neq \ell_{2}$. The existence of $\mathcal{T}$ follows from the observation that it can be naturally defined as a closed subscheme of $\mathcal{S}\times\mathcal{S}$ via the intersection condition $\{p\} = \ell_{1}\cap\ell_{2}$. One can notice that the graph $\mathcal{G}(\mathcal{S},\eta)$ is completely determined by the set $\mathcal{M}$, i.e. for each element $(p,\{\ell_{1},\ell_{2}\})\in\mathcal{M}$ we obtain two edges $(p,\ell_{1})$ and $(p,\ell_2)$. Vice-versa: for each point $p\in\mathcal{L}$ and any two distinct edges $(p,\ell_{1}), (p,\ell_{2}) \in \mathcal{E}$  we obtain a unique pair $(p,\{\ell_{1},\ell_{2}\})\in\mathcal{M}$ because the point of intersection of two lines is unique.

Since the scheme $\mathcal{T}$ is $0$-dimensional, we compute for its integral model over $\mathcal{O}_{K}$ its discriminant ideal $d\mathcal{O}_{K}$. We denote by $D$ the norm of the element $d$. It follows from the definition of the discriminant that each maximal ideal $\mathfrak{p}$ above $p\nmid D$ admits a good reduction for the scheme $\mathcal{T}$.

It follows from the construction of the pair $(\mathcal{T},\theta)$ that the analogously defined scheme $(\mathcal{T}_{\mathfrak{p}},\theta_{\mathfrak{p}})$ which encodes the graph $\mathcal{G}(\mathcal{S}_{\mathfrak{p}},\eta_{\mathfrak{p}})$ satisfies the condition $r_{\mathfrak{p}}\circ\theta = \theta_{\mathfrak{p}}\circ r_{\mathfrak{p}}$ if and only if the graph $\mathcal{G}(\mathcal{S}_{\mathfrak{p}},\eta_{\mathfrak{p}})$ is isomorphic as bipartite graph with the graph $\mathcal{G}(\mathcal{S},\eta)$. The reduction map $r_{\mathfrak{p}}:\mathcal{T}(\overline{K})\rightarrow \mathcal{T}_{\mathfrak{p}}(\overline{\mathbb{F}_{\mathfrak{p}}})$ is bijective when the residue characteristic $p$ of the maximal ideal $\mathfrak{p}$ does not divide $N$.
\end{proof}

\begin{corollary}[Slope invariance under good reduction]\label{cor:slope-invariance}
Let $X/K$ be a smooth quartic $K3$ surface, and let $(S,\eta)$ denote its scheme of lines with incidence graph $G(S,\eta)$. Suppose $(S,\eta)$ has good reduction at a prime $\mathfrak{p}$. Then the reduction
$G(S_{\mathfrak{p}},\eta_{\mathfrak{p}})$
of the incidence graph is isomorphic to $G(S,\eta)$. In particular, the multiplicity counts $\{t_r\}$ (the numbers of $r$-fold intersection points) coincide for $X$ and $X_{\mathfrak{p}}$, and therefore the log-Chern numbers and the slope computed from these counts agree.
\end{corollary}

\begin{proof}
By the definition of good reduction, the reduction map induces an isomorphism of the bipartite incidence graphs, so the combinatorial data $\{t_r\}$. Hence the formulas in Section \ref{sec:comb_constr_K3} that depend only on~$\{t_r\}$ are preserved.
\end{proof}

For a given quartic $X$ defined over $K$ we construct the scheme $(\mathcal{S},\eta)$ via the method which involves the use of Gr\"obner bases. We are interested in computing the structure of the graph $\mathcal{G}(\mathcal{S},\eta)$, but we avoid the direct computation of the scheme $(\mathcal{T},\theta)$ due to typically very high degree of the finite extension $L/K$ which realizes the equality $\mathcal{S}(\overline{K}) = \mathcal{S}(L)$ and $\mathcal{T}(\overline{K}) = \mathcal{T}(L)$. In practice, to deal with such extensions we would need excessive computer memory which is beyond the scope of any practical computation.

Instead, we deal with the problem by a careful choice of the prime number $p$ which provides a good reduction of $(\mathcal{S},\eta)$. In principle, such a prime can be easily determined from the scheme $(\mathcal{T},\theta)$ via the proof of Proposition \ref{prop:good_reduction}. But this direct approach is again rather intractable for most practical purposes. We compute the prime $p$ via the following conditions:
\begin{itemize}
	\item[(I)]The scheme $\mathcal{S}_{\mathfrak{p}}$ is reduced.
	\item[(II)]We embed the scheme $\mathcal{S}$ in the ambient affine space $\mathbb{A}$. We determine the map $\eta$ from this embedding. This is practically computed with the Groebner bases method.
	\item[(III)]For each pair of points $L_{1},L_{2}$ we compute whether the corresponding subscheme of pairs $(p,\{\ell_{1},\ell_{2}\})$ in $\mathcal{T}$ determined by $\eta$ is reduced under reduction modulo $p$. We can do it without explicit computation of the geometric points.
\end{itemize}

In each example analyzed in this paper we provide the suitable Magma code to construct explicitly the pairs $(\mathcal{S}_{p}, \eta_{\mathfrak{p}})$, $(\mathcal{T}_{p}, \theta_{\mathfrak{p}})$ by providing the data which determines $\mathcal{S}_{p}(\overline{\mathbb{F}_{\mathfrak{p}}})$ and $\mathcal{T}_{p}(\overline{\mathbb{F}_{\mathfrak{p}}})$. This is enough to construct the graph $\mathcal{G}(\mathcal{S}_{\mathfrak{p}}),\eta_{\mathfrak{p}})$. Under the good reduction hypothesis for the given prime number $p$ we obtain the graph $\mathcal{G}(\mathcal{S},\eta)$ via Proposition \ref{prop:good_reduction}. The graph $\mathcal{G}(\mathcal{S},\eta)$ encodes for us the weak combinatorics of lines which is need to compute the slope in each case.

\begin{remark}
One could approach the problem of the reconstruction of the graph $\mathcal{G}(\mathcal{S},\eta)$ from the data $(\mathcal{S},\eta)$ purely in characteristic $0$ by dealing with the homotopy continuation method which allows one to approximate (with a fixed precision) the set of points $\mathcal{S}(\mathbb{C})$. This will require a usage of specialized package to compute the list of complex points $\mathcal{S}(\mathbb{C})$ with given precision (in interval arithmetic). In the next step one would have to compute the approximation of the set $\mathcal{T}(\mathbb{C})$ which can be done via solutions of approximate linear systems. To avoid the additional issues of precision tracking we decided to not pursue this method. 
Perhaps one could combine both methods by dealing with a suitable approach via $p$-adic methods. We do not intend to study this approach further in this text.
\end{remark}

In the auxiliary files attached to this paper we gather all the data for the examples (Table~\ref{tab:quartic_examples}) studied in this paper \cite{auxiliary_files}.

\begin{ack}
Bartosz Naskr\k{e}cki acknowledges the support by Dioscuri program initiated by the Max Planck Society, jointly managed with the National Science Centre (Poland), and mutually funded by the Polish Ministry of Science and Higher Education and the German Federal Ministry of Education and Research.\\
We would like to thank Piotr Achinger for comments of the first draft of the paper. We thank Davide Cesare Veniani for providing us with extensive computational database of quartics and to \L ucja Farnik for sharing with us here picture of the dual Hesse arrangement. Finally, we would like to thank the anonymous referees who provided us with many useful comments that helped us improve the readability of this survey.
\end{ack}

\begin{funding}
Piotr Pokora is supported by the National Science Centre (Poland) Sonata Bis Grant  \textbf{2023/50/E/ST1/00025}. For the purpose of Open Access, the author has applied a CC-BY public copyright license to any Author Accepted Manuscript (AAM) version arising from this submission.
\end{funding}
\newpage
\section{Tables}
    \begin{table}[htb]
    \begin{tabular}{cccc|cccc}
    $h_{0}$ & $h_{1}$ & $h_{2}$ & $h_{3}$ & $\widetilde{h}_{0}$ & $\widetilde{h}_{1}$ & $\widetilde{h}_{2}$ & $\widetilde{h}_{3}$ \\
    \hline\hline
    1 & 0 & 0 & 3 & 0 & 1 & 1/3 & 0\\
1 & 0 & 0 & 3 & 0 & 1 & -1/3 & 0\\
1 & 0 & 4/5 & 3/5 & 0 & 1 & -3/5 & 4/5\\
1 & 0 & 4/5 & 3/5 & 0 & 1 & 3/5 & -4/5\\
1 & 0 & -4/5 & 3/5 & 0 & 1 & 3/5 & 4/5\\
1 & 0 & -4/5 & 3/5 & 0 & 1 & -3/5 & -4/5\\
1 & 0 & 0 & 1/3 & 0 & 1 & 3 & 0\\
1 & 0 & 0 & 1/3 & 0 & 1 & -3 & 0\\
1 & 0 & 2 & 0 & 0 & 1 & 0 & 1/2\\
1 & 0 & 2 & 0 & 0 & 1 & 0 & -1/2\\
1 & 0 & 1/2 & 0 & 0 & 1 & 0 & 2\\
1 & 0 & 1/2 & 0 & 0 & 1 & 0 & -2\\
1 & 0 & -1/2 & 0 & 0 & 1 & 0 & 2\\
1 & 0 & -1/2 & 0 & 0 & 1 & 0 & -2\\
1 & 0 & -2 & 0 & 0 & 1 & 0 & 1/2\\
1 & 0 & -2 & 0 & 0 & 1 & 0 & -1/2\\
1 & 0 & 0 & -1/3 & 0 & 1 & 3 & 0\\
1 & 0 & 0 & -1/3 & 0 & 1 & -3 & 0\\
1 & 0 & 4/5 & -3/5 & 0 & 1 & 3/5 & 4/5\\
1 & 0 & 4/5 & -3/5 & 0 & 1 & -3/5 & -4/5\\
1 & 0 & -4/5 & -3/5 & 0 & 1 & -3/5 & 4/5\\
1 & 0 & -4/5 & -3/5 & 0 & 1 & 3/5 & -4/5\\
1 & 0 & 0 & -3 & 0 & 1 & 1/3 & 0\\
1 & 0 & 0 & -3 & 0 & 1 & -1/3 & 0 
    \end{tabular}
    \caption{Configuration $\mathcal{C}$ of the $24$ lines on the surface $X$ from Example \ref{ex:24_lines}, where in each row we define a line by the intersection of two planes $\sum_{i}h_{i}x_i=\sum_{i}\widetilde{h}_{i}x_i=0$}\label{tab:24_lines}
\end{table}
\label{sec:examples}
\begin{table}[ht]
    \centering
    \tiny
    \begin{tabular}{p{3.5cm}ccccccc}
        $F(x_0,x_1,x_2,x_3)=0$ & source & Reduction? & $p$ & $\mathbb{F}_{p^n}$ & $\#\mathcal{C}$ & $[t_2,t_3,t_4]$ & $E$ \\
        \hline\hline
        $x_0^4-x_0x_1^3-x_2^4+x_2x_3^3$ & \cite{Veniani_lines} & YES & 5 &$\mathbb{F}_{5^2}$ & $64$ & $[336, 64, 8]$ & $29/12\approx 2.42$\\
        \hline
        $x_0^3  x_2 + x_1  x_2^3 + x_1^3  x_3 + x_0  x_3^3$ & \cite{Veniani_lines} & YES & 7 & $\mathbb{F}_{7^4}$ & $52$ & $[ 260, 40, 0]$ & $154/65\approx 2.37$ \\
        \hline
        \shortstack[l]{$3 x_2 x_0^3+3 x_2 x_3 x_0^2-x_2^3 x_0$\\
        $-3 x_1 x_2^2 x_0-3 x_1 x_3^2 x_0$\\
        $-x_1 x_3^3+3 x_1^3x_3+3 x_1^2 x_2 x_3$} & \cite{Veniani_lines} & YES & 5 & $\mathbb{F}_{5^8}$ & 54 & $[ 308, 32, 0]$ & $167/72\approx 2.32$\\
         \hline
        \shortstack[l]{$x_1^3x_2 - x_1 x_2^3 + x_0^3 x_3 - x_0x_3^3 $\\
        $- s x_0^2x_1 x_2$, $s=3$} & \cite{Veniani_lines} & YES & 5 & $\mathbb{F}_{5^8}$ & $26$ & $[51,0,3]$ & $37/16\approx 2.31$ \\
        
        \hline
         \shortstack[l]{$a^4 x_1 x_3^3-a^3 x_1^3 x_3-\left(a^3-2 a\right) x_0 x_1^2 x_2$\\
        $+\left(2 a^3-a\right) x_0^2x_1 x_3-\left(2 a^2-1\right) x_1 x_2^2 x_3 $\\
        $-\left(a^4-2 a^2\right) x_0 x_2 x_3^2-ax_0^3 x_2-x_0 x_2^3$,\\
        $a^4-3a^3+2a^2+3a+1=0$} &  \cite{Veniani_lines} & YES & 5 & $\mathbb{F}_{5^4}$ & $52$ & $[356, 8, 0]$ & $162/73\approx 2.22$\\
        \hline
        \shortstack[l]{
        $\left(-36 p^2+6696 p+2052\right) x_1 x_0^3$\\
        $+\left(11 p^2-5542 p+1121\right) x_2 x_3x_0^2$\\
        $+\left(-540 p^2+6048 p-684\right) x_1^3 x_0$\\
        $+\left(-3919 p^2+2318 p-361\right)x_3^3 x_0$\\
        $+\left(-116 p^2+1612 p-380\right) x_1 x_2 x_3 x_0$\\
        $+3312 p^2 x_1^4$\\
        $+\left(-19p^2-100 p+209\right) x_1 x_2^3$\\
        $+\left(-3331 p^2-100 p+209\right) x_1^2 x_2 x_3$\\
        $+4968 p x_1^2 x_0^2+x_0^4$,\\
        $p^3-201p^2+111p-19=0$} & \cite{Veniani_lines} & YES & 7 &$\mathbb{F}_{7^{30}}$ & $52$ & $[378,0,0]$ & $326/149\approx 2.19$\\
        \hline
        
        \shortstack[l]{$-s x_1 x_0 \left(s x_0 x_2+s x_1 x_3
        +x_1 x_2+x_0 x_3\right)$\\
        $+x_3 x_0^3-x_3^3 x_0-x_1x_2^3+x_1^3 x_2$, $s=3$}& \cite{Veniani_char_3} & YES & 41 & $\mathbb{F}_{41^6}$ & 22 & $[62,0,2]$ & $23/11\approx 2.09$\\
        \hline
        \shortstack[l]{$\left(p^4+1\right) \left(q^4+1\right) \left(x_0^2 x_1^2+x_2^2 x_3^2\right)$\\
        $-2\left(p^4+1\right) q^2 \left(x_0^2 x_2^2+x_1^2 x_3^2\right)$\\
        $-2 p^2 \left(q^4+1\right)
        \left(x_1^2 x_2^2+x_0^2 x_3^2\right)$\\
        $+2 p^2 q^2\left(x_0^4+x_1^4+x_2^4+x_3^4\right)$, $p=2,q=3$} & \cite{Veniani_lines} & NO & $0$ & $\mathbb{Q}$ & $24$ & $[96,0,0]$ & 2\\
        
    \end{tabular}
    \caption{For each smooth quartic we find a prime of good reduction $p$ and compute the maximal configuration of lines and the count of intersection points of given multiplicity $r$. In the last column we compute the slope of the configuration.}
    \label{tab:quartic_examples}
\end{table}
\newpage

\end{document}